\tikzset{
    >=stealth',
    punkt/.style={
           rectangle,
           rounded corners,
           draw=black, very thick,
           text width=6.5em,
           minimum height=2em,
           text centered},
    pil/.style={
           ->,
           thick,
           shorten <=2pt,
           shorten >=2pt,}
}
\newcommand{\bu}{\bm u}
\newcommand{\bv}{\bm v}
\newcommand{\bL}{\bm L}
\newcommand{\bbR}{\mathbb{R}}
\newcommand{\nab}{\nabla}
\newcommand{\p}{\partial}
\newcommand{\bH}{\bm H}
\newcommand{\bV}{\bm V}
\newcommand{\bW}{\bm W}
\newcommand{\calT}{\mathcal{T}}
\newcommand{\bX}{\bm X}
\newcommand{\bw}{\bm w}
\newcommand{\Div}{{\rm div}\,}
\newcommand{\mct}{\mathcal{T}_h}
\newcommand{\balpha}{{\bm \alpha}}
\newcommand{\bn}{{\bm n}}
\newcommand{\bpol}{\boldsymbol{\pol}}
\newcommand{\pol}{\EuScript{P}}
\newcommand{\bbeta}{{\bm \beta}}
\newcommand{\bt}{{\bm t}}
\newcommand{\bE}{{\bm E}}
\newcommand{\bI}{{\bm I}}
\newcommand{\bPi}{{\bm \Pi}}
\newcommand{\brho}{{\bm \rho}}
\newcommand{\calE}{\mathcal{E}}
\numberwithin{equation}{section}
\newtheorem{theorem}{Theorem}[section]
\newtheorem{lemma}[theorem]{Lemma}
\newtheorem{corollary}[theorem]{Corollary}
\theoremstyle{definition}
\newtheorem{definition}[theorem]{Definition}
\newtheorem{remark}[theorem]{Remark}
\title{Divergence--free Scott--Vogelius elements on curved domains}
\author[M. Neilan and M.B. Otus]{Michael Neilan\address{Department of Mathematics, University of Pittsburgh, Pittsburgh, PA 15260}\email{neilan@pitt.edu}
\and M. Baris Otus \address{Department of Mathematics, University of Pittsburgh, Pittsburgh, PA 15260}\email{mbo13@pitt.edu}}
\thanks{Supported in part by the National Science Foundation  grant DMS-2011733}
\begin{document}

\maketitle

\begin{abstract}
We construct and analyze an isoparametric
finite element pair for the Stokes
problem in two dimensions.  The pair is defined by mapping the Scott-Vogelius
finite element space via a Piola transform.
The velocity space has the same degrees of freedom as the quadratic Lagrange
finite element space, and therefore, 
the proposed spaces
reduce to the Scott-Vogelius pair in the interior of the domain.  We prove
that the resulting method converges with optimal order, is divergence--free, and is pressure robust.
Numerical examples are provided which support the theoretical results.
\end{abstract}

\section{Introduction}
Isoparametric finite element methods 
are a well-known and extensively studied
technique to approximate PDEs on smooth domains.
Such schemes 
use polynomial diffeomorphisms between
reference and physical elements with degree
dictated by the approximation properties of the underlying finite element space.
The use of such mappings yield curved elements
on the boundary that, while still do not conform exactly 
to the physical domain, generally lead to higher--order approximations
and mitigate the geometric error.
In particular, the resulting geometric error
is generally of the same order as the discretization error, and thus,
the resulting methods are potentially robust with respect to rates
of convergence.
The implementation and analysis of isoparametric
elements for second--order, scalar elliptic problems
are well--established, and classical theories
exist \cite{Zlamal73,CiarletRaviart72,Lenoir86,Brenner,RScottThesis73}.
On the other hand, isoparametric elements
for mixed problems, in particular the Stokes problem,
is less developed \cite{BaeKim04,Verfurth87,Dione15}.

In this paper, we adopt and expand the isoparametric
framework to construct a divergence--free method for incompressible 
flow, i.e., schemes that yield discrete velocity solutions that
are divergence--free pointwise.  The scheme is also
pressure-robust, i.e., the gradient part of the source function
only influences the discrete pressure solution.  This feature
allows a decoupling of errors between the velocity and pressure,
which is beneficial for situations with fluid flow with large pressure gradient and/or small viscosity.
Such divergence-free and pressure-robust finite element schemes seem to be gaining
in popularity \cite{JLMNR17,Zhang05ABC,GuzmanNeilan18,ArnoldQin92,LinkeCR1,KanSharma14,VEM17A}, although, as far as we are aware, the methods have only been constructed
on polytopal domains.  Thus, divergence--free methods are currently
limited to second--order accuracy (formally) on general domains with smooth boundary.

The basis of our construction is the lowest-order two-dimensional Scott-Vogelius pair
defined on Clough-Tocher refinements, i.e., simplicial triangulations
obtained by connecting the vertices of each triangle in a given mesh to its barycenter.
In this case, the velocity space is the space of continuous, piecewise
quadratic polynomials, and the pressure space is the space
of (discontinuous) piecewise linear polynomials.  It is known, on affine Clough-Tocher meshes, 
this pair is stable, and the corresponding scheme is divergence-free
and pressure-robust.  However, a direct application of 
the isoparametric paradigm to this pair leads to a method with neither of these desirable properties.
Indeed, the Scott-Vogelius pair, defined by standard isoparametric mappings, is given by
\begin{subequations}
\label{eqn:pairComp}
\begin{align}
\breve{\bV}_h & = \{\bv\in \bH^1_0(\Omega_h):\ \bv|_K  = \hat\bv \circ F^{-1}_K,\ \exists \hat\bv\in \bpol_2(\hat{T})\ \forall K\in \mct^{ct}\},\\
\breve{Q}_h & = \{q\in L^2_0(\Omega_h):\ q|_K = \hat q \circ F_K^{-1},\ \exists \hat q  \in \pol_{1}(\hat{T})\ \forall K\in \mct^{ct}\},
\end{align}
\end{subequations}
where $\hat T$ is a reference triangle, $\pol_k(\hat T)$ denotes the space 
of polynomials of degree $\le k$ on $\hat T$,  $F_K:\hat T\to K$ is a quadratic diffeomorphism,
and $\mct^{ct}$ is the Clough-Tocher refinement of a simplicial triangulation $\mct$
(cf.~Section \ref{sec:Prelim} for a detailed explanation of the notation).
Applying the chain rule shows  $\Div \bv_h\not\in \breve{Q}_h$
for general $\bv_h\in \breve{\bV}_h$ (unless $F_K$ is affine $\forall K\in \mct$),
and simple calculations show the exact enforcement of the divergence--free constraint
and the pressure--robustness of the scheme using $\breve{\bV}_h \times \breve{Q}_h$
is lost on curved elements.  

The methodology we use to construct
divergence-free and pressure robust 
schemes consists of two main ideas.
First, instead of composition, 
we use a divergence--preserving transformation
to recover the divergence--free property, i.e.,
we use a Piola transform 
in the definition of the local velocity space
instead of composition.  Combining the local spaces defined through this mapping
with the Lagrange degrees of freedom yields
a global non-conforming (velocity) finite element space that is $\bH^1$-conforming
in the interior of the domain and $\bH({\rm div})$-conforming globally.
We also show that the resulting space is ``weakly continuous,'' and therefore
suitable for second-order elliptic problems.

The second main idea in our construction is to treat
the Scott-Vogelius pair as a macro-element, rather than
a finite element space defined on a refined (Clough-Tocher) triangulation.
In particular, local spaces are defined by mapping
a macro reference local space, and therefore the corresponding
finite element code does not ``see'' the global Clough-Tocher triangulation.
This modification is motivated by the stability analysis of the Scott-Vogelius pair,
which is based on Stenberg's macro-element technique \cite{Boffi08}.
Adopting this technique to the isoparametric setting, we show that the resulting
pair satisfies the inf-sup condition, and therefore the finite element method
for the Stokes problem is well-posed.

The rest of the paper is organized as follows.  In the next section, we set the notation,
 state the properties of the quadratic diffeomorphisms, and provide some preliminary results.
 In Section \ref{sec:LocalSpaces}, we define the local spaces of the velocity-pressure pair
 and provide a unisolvent set of degrees of freedom.   Here, we also prove a local inf-sup
 stability result.  Section \ref{sec:GlobalSpaces} states the global spaces and proves
 a global inf-sup stability result.  We also show in this section that functions in the discrete velocity space
enjoy weak continuity properties.  In Section \ref{sec:FEM}, we state the finite element method
and show that the method is optimally convergent.  Section \ref{sec:Robust}
gives a pressure-robust scheme through the use of commuting projections,
and Section \ref{sec:Numerics} provides numerical experiments which confirm
the theoretical results.  Some auxiliary results are given in Appendix \ref{sec:ProofPrelim}.

\section{Preliminaries}\label{sec:Prelim}

We assume that the domain $\Omega\subset \bbR^2$ is sufficiently smooth,
and the boundary $\p\Omega$ is given by a finite number of local charts.
The construction of the mesh with curved boundaries follows  the standard isoparametric framework in \cite{Lenoir86,Brenner,CiarletRaviart72,Bernardi86}.
In particular, we start with a shape-regular and affine triangulation $\tilde \calT_h$, with mesh size sufficiently small, such that the boundary vertices of $\tilde \calT_h$
lie on $\p\Omega$, and 
$\tilde \Omega_h:={\rm int}\Big(\cup_{\tilde T\in \tilde \calT_h} \overline{\tilde T}\Big)$ is an $\mathcal{O}(h^2)$
polygonal approximation to $\Omega$.   Here, $h = \max_{\tilde T\in \tilde \calT_h} {\rm diam}(\tilde T)$.
We assume each $\tilde T\in \tilde \calT_h$ has at most two boundary vertices.

\begin{remark}
For the continuation of the paper, we use $C$ (with or without subscript) to denote a 
generic constant that is independent of any mesh size parameter.
\end{remark}

We let 
$G:\tilde\Omega_h \to \Omega$ be a bijective map with $\|G\|_{W^{1,\infty}(\tilde \Omega_h)}\le C$ such that
such that $G|_{\tilde T}(x) = x$ at all vertices of $\tilde T$, in particular,
$G$
 is the identity map for any triangle $\tilde T\in \tilde \mct$
with three interior vertices. 
We denote by $G_h$ the piecewise quadratic nodal interpolant of $G$ satisfying 
$\|DG_h\|_{W^{1,\infty}(\tilde T)}\le C$ and $\|DG_h^{-1}\|_{W^{1,\infty}(\tilde T)}\le C$ for all $\tilde T\in \tilde \calT_h$.
We then 
set 
\[
\mct = \{G_h(\tilde T):\ \tilde T\in \tilde \calT_h\},\qquad  \Omega_h:={\rm int}\Big(\cup_{ T\in  \calT_h} \overline{ T}\Big)
\]
to be the isoparametric triangulation and computational domain, respectively.

Denote by $\hat T$ the reference triangle with vertices $(1,0),(0,1)$, and $(0,0)$.
For $\tilde T\in \tilde \calT_h$, we denote
by $F_{\tilde T}:\hat T\to \tilde T$ an affine mapping satisfying $|F_{\tilde T}|_{W^{1,\infty}(\hat T)}\le C h_T$
and $|F_{\tilde T}^{-1}|_{W^{1,\infty}(\tilde T)}\le C h_T^{-1}$, where $h_T = {\rm diam}(\tilde T)$.
We define the quadratic diffeomorphism $F_T:\hat T\to T$
as $F_T = G_h\circ F_{\tilde T}$ which satisfies
\begin{align}\label{eqn:FProperties}
&|F_T|_{W^{m,\infty}(\hat T)}\le C h_T^m\quad 0\le m\le 2,\qquad |F_T^{-1}|_{W^{m,\infty}(T)}\le C h_T^{-m}\quad 0\le m\le 3,\\
&\nonumber c_1 h_T^2 \le \det(DF_T)\le c_2 h_T^2,
\end{align}
where $h_T = {\rm diam}(G_h^{-1}(T))$.
Note the mappings
$F_T$ and $F_{\tilde T}$ (with $T = G_h(\tilde T)$) are oriented in the same way so that $F_T = F_{\tilde T}$ at the vertices 
of $\hat T$.  In particular, the mappings coincide 
 if $G|_{\tilde T}$ is the identity operator.
 Furthermore, if $e\subset \p T$ is a straight edge with $e = F_{T}(\hat e)$ and $\hat e\subset \p \hat T$,
then $F_T|_{\hat e}$ is affine.  If $T\in \mct$ has all straight edges, then $F_T$ is affine and $T = G_h(\tilde T) = \tilde T$.
The conditions on $F_T$ and the shape-regularity of $\tilde \calT_h$ imply 
$|T|/|G_h^{-1}(T)|\le C$ and $|G_h^{-1}(T)|/|T|\le C$ for all $T\in \calT_h$.

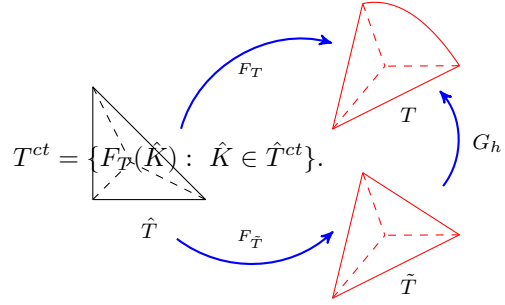
\begin{wrapfigure}{R}{0.48\textwidth}
\centering
\begin{tikzpicture}[scale=0.75]
\draw[-](0,0)--(2,0);
\draw[-](2,0)--(0,2);
\draw[-](0,2)--(0,0);
\draw[-,dashed](0,0)--(2/3,2/3);
\draw[-,dashed](2,0)--(2/3,2/3);
\draw[-,dashed](0,2)--(2/3,2/3);
    \draw (1,-0.5) node {\footnotesize $\hat{T}$};
\draw[red, domain=4.785:6.5,smooth,variable=\t]plot (\t,-.5*\t^2+5.0*\t-9);
\draw[-,red](4.785,3.4768875)--(4.25,1.25);
\draw[-,red](4.25,1.25)--(6.5,2.375);
\draw[-,dashed,red](4.785,3.4768875)--(5.178333333,2.367295833);
\draw[-,dashed,red](4.25,1.25)--(5.178333333,2.367295833);
\draw[-,dashed,red](6.5,2.375)--(5.178333333,2.367295833);
\draw[-,red](4.785,0.4768875)--(4.25,-1.75);
\draw[-,red](4.25,-1.75)--(6.5,-0.625);
\draw[-,red](4.785,0.4768875)--(6.5,-0.625);
\draw[-,dashed,red](4.785,0.4768875)--(5.178333333,2.367295833-3);
\draw[-,dashed,red](4.25,1.25-3)--(5.178333333,2.367295833-3);
\draw[-,dashed,red](6.5,2.375-3)--(5.178333333,2.367295833-3);

 \node (phys) at (4.5,2.75) {};
 \node (ref)  at (1.5,1) {}
 edge[pil,bend left=45,blue] (phys.west);
 \draw (2.8,2.3) node {\tiny$F_{T}$};
  \draw (5.6,1.5) node {\footnotesize ${T}$};
  \draw (2.8,2.3-3.) node {\tiny$F_{\tilde T}$}; 
  \draw (5.6,1.5-3) node {\footnotesize ${\tilde T}$};  
  
  \node (physb) at (4.5,2.75-3.25) {};
 \node (refb)  at (1.25,-0.5) {}
 edge[pil,bend right=40,blue] (physb.west);
 
   \node (physc) at (6.25,2.) {};
 \node (refc)  at (6.,0) {}
 edge[pil,bend right=40,blue] (physc.west);
 
   \draw (7,1) node {\footnotesize $G_h$};  
\end{tikzpicture}
\caption{\label{fig:mappings}\noindent Left: Clough-Tocher split of the reference triangle $\hat T$.  Right: The
corresponding curved and straight macro elements induced by the mappings $F_T$ and $F_{\tilde T}$.}
\end{wrapfigure}

Denote by $\hat T^{ct} = \{\hat K_i\}_{i=1}^3$ the Clough--Tocher triangulation of
the reference triangle, obtained by connecting the vertices
of $\hat T$ with its barycenter.  We then define the analogous local triangulations
on $\tilde T\in \tilde \calT_h$ and $T\in \calT_h$, respectively, (cf. Figure \ref{fig:mappings})
\begin{align*}
\tilde T^{ct}& = \{F_{\tilde T}(\hat K): \hat K\in \hat T^{ct}\},\qquad
T^{ct}  = \{F_T(\hat K):\ \hat K\in \hat T^{ct}\}.
\end{align*}
{The properties of $F_T$ 
show  $|T|\le C|K|$ for all $K\in T^{ct}$.}

We denote by $\calE^I_h$ the interior (straight) edges of $\calT_h$, and by $\calE_h^{I,\p}\subset \calE_h^I$ the 
set of interior edges that have one endpoint on $\p\Omega_h$, i.e.,
the set of interior edges that ``touch'' the computational boundary.
We use the generic $\bn$ to denote a outward unit normal of a domain which is clear from its
context.  The tangent vector $\bt$ is obtained by rotating $\bn$ $90$ degrees counterclockwise.

{
\begin{remark}
\begin{enumerate}

\item 
The globally refined triangulations are given by
\begin{align*}
\tilde \calT_h^{ct}& = \{\tilde K:\ \tilde K\in \tilde T^{ct},\ \exists \tilde T\in \tilde \calT_h\},\\
 \calT_h^{ct}& = \{ K:\  K\in  T^{ct},\ \exists  T\in  \calT_h\}.
 \end{align*}
 However, we emphasize that the construction of the Clough-Tocher 
isoparametric mesh $\calT_h^{ct}$ is constructed by mapping
the reference {\em macro element} $\hat T^{ct}$.
%
In particular, the finite element spaces, given in  subsequent sections, are defined
on $\mct$ (not $\mct^{ct}$); in fact, the corresponding finite element code does not ``see''
the refined triangulation $\mct^{ct}$.

\item Note that this construction leads
to curved interior edges in $\calT_h^{ct}$, as interior edges of $T^{ct}$ may be curved.
\end{enumerate}
\end{remark}
}

The proofs of the following two lemmas are given in Appendix \ref{sec:ProofPrelim}.
\begin{lemma}\label{lem:ATLem}
For each $T\in \mct$, define the matrix valued function $A_T:\hat T\to \mathbb{R}^{2\times 2}$ as
\begin{align}
\label{eqn:ATDef}
A_T(\hat x) = \frac{DF_T(\hat x)}{\det(DF_T(\hat x))}.
\end{align}
Then there holds
\begin{align}
\label{eqn:ATbound}
|A_T|_{W^{m,\infty}(\hat T)}\le C h_T^{m-1},\qquad \text{and}\qquad
|A_T^{-1}|_{W^{m,\infty}(\hat T)}
\le \left\{
\begin{array}{ll}
Ch_T^{1+m} & m=0,1\\
0 & m\ge 2
\end{array}
\right.
\end{align}
\end{lemma}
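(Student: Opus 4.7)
The plan is to treat the two estimates in \eqref{eqn:ATbound} separately. The bound on $A_T^{-1}$ is essentially algebraic, while the bound on $A_T$ requires more care due to the denominator $\det(DF_T)$.

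For $A_T^{-1}$, I would exploit the classical adjugate identity for $2\times 2$ matrices: $M \cdot \operatorname{adj}(M) = \det(M) I$. Applied to $M = DF_T$, this yields $A_T^{-1} = \det(DF_T)(DF_T)^{-1} = \operatorname{adj}(DF_T)$, whose entries are precisely the entries of $DF_T$ up to permutation and sign. Since $F_T$ is a quadratic polynomial map, $DF_T$ is linear in $\hat x$, so the bounds $|F_T|_{W^{m+1,\infty}(\hat T)} \le C h_T^{m+1}$ given in \eqref{eqn:FProperties} translate directly into $|A_T^{-1}|_{W^{m,\infty}(\hat T)} \le C h_T^{m+1}$ for $m = 0, 1$, while higher derivatives vanish by linearity.

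For $A_T$, I would write $A_T = DF_T \cdot J^{-1}$ with $J := \det(DF_T)$ and proceed in two stages. First, bound the derivatives of $J$: being the determinant of a linear $2\times 2$ matrix, $J$ is a polynomial of degree at most $2$ in $\hat x$, so the product rule combined with the known bounds on entries of $DF_T$ gives $|\nabla^k J|_{L^\infty(\hat T)} \le C h_T^{2+k}$ for $k = 0, 1, 2$ and $\nabla^k J \equiv 0$ for $k \ge 3$. Second, combine with the lower bound $J \ge c_1 h_T^2$ via Faà di Bruno applied to $\phi(t) = t^{-1}$: each term in the expansion is of the form $\phi^{(r)}(J) \prod_i (\nabla^i J)^{r_i}$ with $\sum r_i = r$ and $\sum i\, r_i = k$, scaling as $h_T^{-2(r+1)} \cdot h_T^{\sum (2+i)r_i} = h_T^{k-2}$, so that $|\nabla^k J^{-1}|_{L^\infty(\hat T)} \le C h_T^{k-2}$ for every $k \ge 0$. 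Finally, Leibniz's rule applied to $A_T = DF_T \cdot J^{-1}$, noting that $DF_T$ contributes only at differentiation orders $0$ and $1$, yields
\[
|\nabla^m A_T|_{L^\infty(\hat T)} \le C \bigl( |DF_T|\,|\nabla^m J^{-1}| + |\nabla DF_T|\,|\nabla^{m-1} J^{-1}| \bigr) \le C h_T^{m-1}.
\]

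The main obstacle is the bookkeeping in Faà di Bruno: one must verify that the scaling $-2(r+1) + \sum (2+i)r_i = k - 2$ holds independently of $r$ under the compositional constraints $\sum r_i = r$ and $\sum i\, r_i = k$. Once this cancellation is observed, the estimates collapse to routine calculations.
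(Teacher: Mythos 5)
Your proposal is correct and follows essentially the same route as the paper: the $A_T^{-1}$ bound via the $2\times 2$ adjugate identity $A_T^{-1}=\operatorname{adj}(DF_T)$ (so its entries are those of $DF_T$ up to sign and permutation), and the $A_T$ bound by first estimating $|\nabla^k(\det DF_T)^{-1}|\le Ch_T^{k-2}$ using the lower bound $\det(DF_T)\ge c_1h_T^2$ together with a quotient/Fa\`a di Bruno expansion, then applying Leibniz to the product $DF_T\cdot(\det DF_T)^{-1}$. The exponent bookkeeping $-2(r+1)+\sum(2+i)r_i=k-2$ checks out and matches the paper's computation $h_T^{3m}/|\hat g|^{m+1}\le Ch_T^{m-2}$.
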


\begin{lemma}\label{lem:cofactorOnEdge}
Let $\tilde T\in \tilde \calT_h$ and $T\in \calT_h$ with $T = G_h(\tilde T)$.
Let $\hat e$ be an edge of $\hat T$ with outward unit normal $\hat \bn$,
and assume that the corresponding edge $e = F_T(\hat e)$ on $T$ is straight.
Then 
\[
\det(DF_T(\hat x))(DF_T(\hat x))^{-\intercal}\hat \bn = \det(DF_{\tilde T} (\hat x))(DF_{\tilde T}(\hat x))^{-\intercal}\hat \bn
\]
is constant on $\hat e$.
\end{lemma}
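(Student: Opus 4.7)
The plan is to reduce the vector identity to a statement about tangential derivatives, which is immediate from the fact that $F_T$ and $F_{\tilde T}$ coincide pointwise on $\hat e$ whenever $e = F_T(\hat e)$ is straight.

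First, I would argue that $F_T|_{\hat e} = F_{\tilde T}|_{\hat e}$. By the paper's preliminary observation, since $e$ is straight, $F_T|_{\hat e}$ is affine; the map $F_{\tilde T}$ is globally affine; and the two maps agree at the endpoints of $\hat e$ (which are vertices of $\hat T$, and $G_h$ was built to be the identity at vertices, so $F_T$ and $F_{\tilde T}$ take the same values there). Two affine maps on a segment that agree at the endpoints are equal on the segment. Taking tangential derivatives, this yields $DF_T(\hat x)\,\hat\bt = DF_{\tilde T}(\hat x)\,\hat\bt$ for all $\hat x \in \hat e$, where $\hat\bt$ denotes the unit tangent to $\hat e$ (obtained by rotating $\hat\bn$ ninety degrees counterclockwise).

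Second, I would rewrite the cofactor-applied-to-normal using a standard two-dimensional identity. For any invertible $A\in\bbR^{2\times 2}$ and any unit normal--tangent pair $(\hat\bn,\hat\bt)$ with $\hat\bt = R\hat\bn$ (where $R$ is the ninety-degree counterclockwise rotation), a direct computation gives
\[
\det(A)\, A^{-\intercal}\hat\bn = R^{-1} A\,\hat\bt.
\]
Applying this with $A = DF_T(\hat x)$ and $A = DF_{\tilde T}(\hat x)$ converts both sides of the desired identity into $R^{-1}$ applied to a tangential derivative of $F_T$ or $F_{\tilde T}$.

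Third, combining the two steps gives
\[
\det(DF_T(\hat x))(DF_T(\hat x))^{-\intercal}\hat\bn = R^{-1} DF_T(\hat x)\hat\bt = R^{-1} DF_{\tilde T}(\hat x)\hat\bt = \det(DF_{\tilde T}(\hat x))(DF_{\tilde T}(\hat x))^{-\intercal}\hat\bn
\]
for all $\hat x \in \hat e$. Finally, since $F_{\tilde T}$ is affine, $DF_{\tilde T}$ is a constant matrix, so the right-hand side (and hence the common value) is constant on $\hat e$.

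The only possible obstacle is bookkeeping: verifying the sign/orientation in the cofactor identity and making sure that ``tangent to $\hat e$'' is consistently oriented so that $\hat\bt = R\hat\bn$; but this is a purely elementary $2\times 2$ computation and poses no genuine difficulty.
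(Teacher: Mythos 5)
Your proof is correct and follows essentially the same route as the paper: both reduce $\det(DF)\,DF^{-\intercal}\hat\bn$ to a $90^\circ$ rotation of the tangential derivative $DF\,\hat\bt$ and then invoke the affineness of $F_T|_{\hat e}$. You are slightly more explicit than the paper in justifying the equality of the $F_T$ and $F_{\tilde T}$ expressions (via agreement of the two affine restrictions at the endpoints of $\hat e$), which is a welcome clarification rather than a departure.
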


We also need a scaling result which is found in \cite{Bernardi86}.
\begin{lemma}\label{lem:scaling}
Suppose that $\bw(x) = \hat \bw(\hat x)$ for sufficiently smooth $\bw\in W^{m,p}(T)$.
Then for any $K\in T^{ct}$,
\begin{align*}
|\bw|_{W^{m,p}(K)}&\le C h_T^{2/p-m} \sum_{r=0}^m h_T^{2(m-r)} |\hat \bw|_{W^{r,p}(\hat K)},\\
|\hat \bw|_{W^{m,p}(\hat K)} &\le C h_T^{m-2/p} \sum_{r=0}^m |\bw|_{W^{r,p}(K)},
\end{align*}
with $\hat K = F_T^{-1}(K)$.
\end{lemma}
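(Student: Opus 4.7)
The plan is to establish both inequalities via the multivariate chain rule (Faà di Bruno's formula) combined with the scaling bounds from (\ref{eqn:FProperties}) and a change of variables. The two estimates are structurally symmetric: one transfers norms from the reference sub-triangle $\hat K$ to the physical sub-triangle $K$, and the other does the reverse. Since $F_T$ restricts to a diffeomorphism from $\hat K$ onto $K$ with Jacobian satisfying $c_1 h_T^2 \le \det(DF_T) \le c_2 h_T^2$, the change of variables contributes a single global factor of $h_T^{\pm 2}$.

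I would first handle the second (reverse) inequality. Writing $\hat\bw(\hat x) = \bw(F_T(\hat x))$ and applying Faà di Bruno, one expresses $D^m \hat\bw$ as a finite sum, indexed by partitions $\pi = \{B_1, \ldots, B_r\}$ of $\{1, \ldots, m\}$, of terms proportional to $D^r \bw(F_T(\hat x)) \prod_{i=1}^r D^{|B_i|} F_T(\hat x)$. The bound $|F_T|_{W^{j,\infty}(\hat T)} \le C h_T^j$ from (\ref{eqn:FProperties}) yields $\|\prod_i D^{|B_i|} F_T\|_{L^\infty(\hat T)} \le C h_T^m$ and hence $|D^m \hat\bw(\hat x)| \le C h_T^m \sum_{r=1}^m |D^r \bw(F_T(\hat x))|$. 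Raising to the $p$-th power, integrating over $\hat K$, and changing variables via $\hat x = F_T^{-1}(x)$ with $|\det DF_T|^{-1} \le c h_T^{-2}$ produces $|\hat\bw|_{W^{m,p}(\hat K)}^p \le C h_T^{mp-2} \sum_{r=1}^m |\bw|_{W^{r,p}(K)}^p$, which yields the stated bound after taking $p$-th roots.

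The first inequality follows the same template with $F_T^{-1}$ in place of $F_T$. Using $\bw(x) = \hat\bw(F_T^{-1}(x))$ and the bound $|F_T^{-1}|_{W^{j,\infty}(T)} \le C h_T^{-j}$, Faà di Bruno gives $|D^m \bw(x)| \le C h_T^{-m} \sum_{r=1}^m |D^r \hat\bw(F_T^{-1}(x))|$; the change of variables $x = F_T(\hat x)$ with $\det(DF_T) \le c h_T^2$ then leads to $|\bw|_{W^{m,p}(K)} \le C h_T^{2/p-m} \sum_{r=1}^m |\hat\bw|_{W^{r,p}(\hat K)}$. Because $h_T$ is bounded above, the additional weights $h_T^{2(m-r)}$ in the stated inequality only enlarge (or, when $r=m$, preserve) the right-hand side, so the stronger estimate just derived implies the asserted one.

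The main obstacle is the combinatorial bookkeeping of the Faà di Bruno expansion, and in particular the verification that every product $\prod_i D^{|B_i|} F_T^{\pm 1}$ scales like $h_T^{\pm m}$ independently of how $m$ is partitioned. Since $F_T$ is piecewise quadratic, $D^j F_T = 0$ for $j \ge 3$ truncates the expansion used in the second inequality; the derivatives of $F_T^{-1}$ used in the first are controlled by (\ref{eqn:FProperties}) only up to order $3$, which implicitly caps the admissible range of $m$.
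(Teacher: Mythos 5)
The paper does not prove this lemma at all---it is quoted from \cite{Bernardi86}---so the only question is whether your argument is sound. Your treatment of the second inequality is fine: Fa\`a di Bruno applied to $\hat\bw=\bw\circ F_T$ with $|F_T|_{W^{j,\infty}(\hat T)}\le Ch_T^{j}$ gives every partition product a factor $Ch_T^{m}$, and the change of variables supplies $h_T^{-2/p}$.

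The first inequality is where there is a genuine gap, and it is in your last sentence: the weights $h_T^{2(m-r)}$ for $r<m$ are positive powers of the \emph{small} parameter $h_T$, so they \emph{shrink} the corresponding terms rather than enlarge them. The weighted estimate is therefore strictly stronger than the unweighted bound $Ch_T^{2/p-m}\sum_{r}|\hat\bw|_{W^{r,p}(\hat K)}$ that your Fa\`a di Bruno computation on $F_T^{-1}$ actually delivers, and the implication you assert runs backwards. Nor can your route be repaired by bookkeeping: already for $m=2$ the term $D\hat\bw\,D^2(F_T^{-1})$ estimated via $|F_T^{-1}|_{W^{2,\infty}(T)}\le Ch_T^{-2}$ contributes $Ch_T^{-2}|D\hat\bw|$, whereas the lemma asserts a coefficient of order $h_T^{0}$ on $|\hat\bw|_{W^{1,p}}$. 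The information you are discarding is that the \emph{forward} map has small higher derivatives: $|F_T|_{W^{2,\infty}(\hat T)}\le Ch_T^{2}$ and $D^kF_T\equiv 0$ for $k\ge 3$. The standard argument solves the forward identity $D^m\hat\bw=(D^m\bw\circ F_T)[DF_T]^{\otimes m}+\sum_{r<m}(D^r\bw\circ F_T)P_r$ for $D^m\bw\circ F_T$, notes that each admissible partition product $P_r$ satisfies $\|P_r\|_{L^\infty}\le Ch_T^{m}$ (blocks of size $1$ or $2$ only), and proceeds by induction on $m$; this produces the decaying weights and, importantly, works for $m=4$, where your approach would need a bound on $D^4(F_T^{-1})$ that \eqref{eqn:FProperties} does not provide. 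These weights are not cosmetic: Lemma \ref{lem:JumpEst} invokes the first inequality with $m=4$, $p=\infty$ precisely to obtain $|\bv|_{W^{4,\infty}(K_\pm)}\le Ch_{T_\pm}^{-3}\|\bv\|_{L^2(T_\pm)}$; the unweighted version only gives $Ch_{T_\pm}^{-5}$, which degrades the jump estimate from $O(h^3)$ to $O(h)$ and with it the optimal convergence rate of the whole method.
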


\section{Local Spaces}\label{sec:LocalSpaces}
Recall $\hat T\subset \bbR^2$ is the
reference triangle,
and  $\hat T^{ct} = \{\hat K_1,\hat K_2,\hat K_3\}$
is the Clough--Tocher triangulation, obtained 
by connecting the vertices
of $\hat T$ with its barycenter.  We define the polynomial 
spaces on $\hat T$ {without boundary conditions}:
\begin{align*}
\hat \bV & = \{\hat \bv\in \bH^1(\hat T):\ \hat \bv|_{\hat K}\in \bpol_{2}(\hat K)\ \forall \hat K\in \hat T^{\rm ct}\},\quad
\hat Q  = \{\hat q\in L^2(\hat T):\ \hat q|_{\hat K} \in \pol_{1}(\hat K)\ \forall \hat K\in \hat T^{\rm ct}\},
\end{align*}
where $\pol_k(S)$ is the space of scalar polynomials of degree $\le k$ with domain $S$, and $\bpol_k(S) = [\pol_k(S)]^2$.

For an affine triangle $\tilde T\in \tilde \calT_h$ in the polygonal mesh,
we define the  spaces via composition
\begin{align*}
\tilde \bV(\tilde T) &= \{\tilde \bv\in \bH^1(\tilde T):\ \tilde \bv(\tilde x) = \hat \bv(\hat x),\ \exists \hat \bv\in \hat \bV \},\quad
\tilde Q(\tilde T)  = \{\tilde q\in L^2(\tilde T):\ \tilde q(\tilde x) = \hat q(\hat x),\ \exists \hat q\in \hat Q\},
\end{align*}
where $\tilde x = F_{\tilde T}(\hat x)$.  Thus, $\tilde \bV(\tilde T)$ is
the local, quadratic Lagrange finite element space with respect
to $\tilde T^{ct}$, and $\tilde Q(\tilde T)$
is the space of (discontinuous) piecewise linear polynomials with
respect to $\tilde T^{ct}$.  We also define the analogous spaces
with boundary conditions
\begin{alignat*}{2}
&\hat \bV_0 = \hat \bV\cap \bH^1_0(\hat T),\qquad &&\hat Q_0 = \hat Q\cap L^2_0(\hat T),\\
&\tilde  \bV_0(\tilde T) = \tilde \bV(\tilde T)\cap \bH^1_0(\tilde T),\qquad &&\tilde  Q_0(\tilde T)  = \tilde Q(\tilde T)\cap L^2_0(\tilde T).
\end{alignat*}

For  $T\in \mct$, possibly with curved boundary,
we define the spaces with the aid of the Piola transform
\begin{alignat*}{2}
\bV(T)& = \{\bv\in \bH^1(T):\ \bv(x) = A_T(\hat x) \hat \bv(\hat x),\ \exists \hat \bv\in \hat \bV\},\ \ &&\bV_0(T)  = \bV(T)\cap \bH^1_0(T),\\
Q(T) & = \{q\in L^2(T):\ q(x) = {\hat q(\hat x)},\ \exists \hat q\in \hat Q\},\quad &&
Q_0(T)  = \{q\in L^2(T):\ q(x) = {\hat q(\hat x)},\ \exists \hat q\in \hat Q_0\}.
%
\end{alignat*}
Here, $x = F_T(\hat x)$ and we recall $A_T(\hat x) = DF_T(\hat x)/\det(DF_T(\hat x))$.
If $F_T$ is affine, then  $\bV(T) = \tilde \bV(\tilde T)$ and $Q(T) = \tilde Q(\tilde T)$;
otherwise,
both $\bV(T)$ and $Q(T)$ are not necessarily piecewise polynomial spaces.
Moreover, for $\bv\in \bV(T)$ and for a straight edge $e\subset \p T$,
the restriction of $\bv$ to $e$ is not necessarily a polynomial, even though
$F^{-1}_T$ is affine on $e$.  Nonetheless, the next lemma shows
 the normal component of $\bv$ is a polynomial on straight edges.

\begin{lemma}
Let $\bv\in \bV(T)$, and suppose that $e$ is a straight edge of $\p T$ with unit normal $\bn$.
Then $\bv\cdot \bn|_e$ is a quadratic polynomial.
\end{lemma}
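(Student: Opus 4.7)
The plan is to exploit the fact that the map $\hat\bv\mapsto \bv = A_T \hat\bv$ is the contravariant Piola transform (since $\det DF_T>0$ by \eqref{eqn:FProperties}), and that the Piola transform preserves normal traces up to the surface Jacobian. The proof should therefore reduce to two observations: that on a straight physical edge with affine reference pullback the relevant Jacobian is constant, and that the normal component of the reference function on a reference edge is a single (not piecewise) quadratic polynomial.

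First I would compute $\bv\cdot \bn$ directly on $e$. Writing $\bv = (1/\det DF_T)\, DF_T \hat\bv$ and using the change-of-measure formula $\bn\, ds = \mathrm{cof}(DF_T)\hat\bn\, d\hat s$, where $\mathrm{cof}(DF_T)=\det(DF_T)(DF_T)^{-\intercal}$, a one-line manipulation based on the identity $DF_T^\intercal\, \mathrm{cof}(DF_T) = \det(DF_T)\, I$ yields the pointwise relation
\[
\bv\cdot\bn\, ds \;=\; \hat\bv\cdot\hat\bn\, d\hat s
\]
along $e$, or equivalently $\bv\cdot\bn = (d\hat s/ds)\, \hat\bv\cdot\hat\bn$.

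Next, the hypothesis that $e$ is straight combined with the remark in Section \ref{sec:Prelim} that $F_T|_{\hat e}$ is then affine makes $ds/d\hat s$ a positive constant on $\hat e$. Lemma \ref{lem:cofactorOnEdge} provides an alternative and perhaps cleaner route to the same conclusion: it identifies $\det(DF_T)(DF_T)^{-\intercal}\hat\bn$ with the affine value $\det(DF_{\tilde T})(DF_{\tilde T})^{-\intercal}\hat\bn$, which is constant on $\hat e$. Either way, $\bv\cdot\bn = c\,\hat\bv\cdot\hat\bn$ on $e$ for a single constant $c$ depending only on $e$.

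Finally, $\hat e$ is a boundary edge of exactly one Clough--Tocher sub-triangle $\hat K\in \hat T^{ct}$, so $\hat\bv|_{\hat e}\in \bpol_2(\hat e)$ and $\hat\bv\cdot\hat\bn|_{\hat e}$ is a genuine (not piecewise) quadratic polynomial in the arc-length parameter on $\hat e$. Since the parameterization between $\hat e$ and $e$ is affine, this pushes forward to a quadratic polynomial in the arc-length parameter on $e$, finishing the argument. I anticipate the only real bookkeeping obstacle to be the careful identification of $\bn$ with a normalization of $\mathrm{cof}(DF_T)\hat\bn$, so that orientation and normalization factors cancel cleanly; beyond this, no serious difficulty is expected.
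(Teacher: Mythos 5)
Your proposal is correct and takes essentially the same route as the paper: both arguments reduce $\bv\cdot\bn|_e$ to a constant nonzero multiple of $\hat\bv\cdot\hat\bn|_{\hat e}$ (the paper via the explicit identity $\hat\bv\cdot\hat\bn = (\det(DF_T)DF_T^{-\intercal}\hat\bn)\cdot\bv$ combined with Lemma \ref{lem:cofactorOnEdge} and the formula $\bn = DF_T^{-\intercal}\hat\bn/|DF_T^{-\intercal}\hat\bn|$, you via the equivalent normal-trace identity of the Piola transform together with the constancy of the surface Jacobian on a straight edge). The concluding step is identical: $F_T|_{\hat e}$ is affine and $\hat\bv\cdot\hat\bn$ is a single quadratic on $\hat e$ because $\hat e$ lies in one Clough--Tocher subtriangle, so no further comment is needed.
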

\begin{proof}
Write 
$\bv(x) = A_T(\hat x) \hat \bv(\hat x)$ for some $\hat \bv\in \hat \bV$,
and set $\hat e = F_T^{-1}(e)$ to be the corresponding edge in $\p \hat T$ with outward unit normal $\hat \bn$.
We then have
\begin{align*}
\hat \bv\cdot \hat \bn
& = (\det(DF_T) DF_T^{-1} \bv)\cdot \hat \bn
 = (\det(DF_T)DF_T^{-\intercal} \hat \bn) \cdot \bv.
\end{align*}
By Lemma \ref{lem:cofactorOnEdge}, $(\det(DF_T)DF_T^{-\intercal} \hat \bn)$
is a constant vector.  Using the identity
$\bn = {DF^{-\intercal} \hat \bn}/{|DF^{-\intercal} \hat \bn|} $ \cite{MonkBook}, 
we conclude $(\det(DF_T)DF_T^{-\intercal} \hat \bn)$ is a non-zero multiple of $\bn$.
In particular $\bv\cdot \bn$ is a non-zero multiple of $\hat \bv\cdot \hat \bn$.
Because $F_T|_{\hat e}$ is affine and $\hat \bv\cdot \hat \bn$ is a quadratic polynomial on $\hat e$,
we conclude  $\bv\cdot \bn|_e$ is a quadratic polynomial on $e$.
\end{proof}

\begin{lemma}\label{lem:H1Piola}
Suppose $\bv = A_T \hat \bv\in \bV(T)$ for some $\hat \bv\in \hat \bV$.
There holds
$\|\bv\|_{H^1(T)}\le C h_T^{-1} \|\hat \bv\|_{H^1(\hat T)}.$
\end{lemma}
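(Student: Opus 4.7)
The plan is to treat the $L^2$ norm and $H^1$ seminorm of $\bv$ separately via the change of variables $x = F_T(\hat x)$, and to control each using the $W^{m,\infty}$ bounds on $A_T$ from Lemma~\ref{lem:ATLem} together with the Jacobian bounds in \eqref{eqn:FProperties}. Since $\bv = A_T(\hat x)\hat\bv(\hat x)$ is the (contravariant) Piola transform with the scaling $A_T = DF_T/\det(DF_T)$, the standard Piola identity $\det(DF_T)\, d\hat x = dx$ will turn integrals over $T$ into integrals over $\hat T$.

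For the $L^2$ contribution, I would simply change variables to write
\[
\|\bv\|_{L^2(T)}^2 = \int_{\hat T} |A_T(\hat x)\hat\bv(\hat x)|^2\, \det(DF_T(\hat x))\, d\hat x,
\]
and then use $\|A_T\|_{L^\infty(\hat T)}\le C h_T^{-1}$ (Lemma~\ref{lem:ATLem}, $m=0$) together with $\det(DF_T)\le C h_T^{2}$ from \eqref{eqn:FProperties}. The two powers of $h_T$ cancel, giving $\|\bv\|_{L^2(T)}\le C\|\hat\bv\|_{L^2(\hat T)}$.

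For the seminorm, I would apply the product and chain rules (with $\hat x = F_T^{-1}(x)$) to express, in matrix form,
\[
\nabla_x \bv = \bigl[(\nabla_{\hat x} A_T)\hat\bv + A_T\,\nabla_{\hat x}\hat\bv\bigr]\, DF_T^{-1}(\hat x),
\]
where the first bracketed term has to be interpreted component-wise with the appropriate contraction. Combining $|A_T|\le C h_T^{-1}$ and $|\nabla_{\hat x}A_T|\le C$ from Lemma~\ref{lem:ATLem} with $|DF_T^{-1}|\le C h_T^{-1}$ from \eqref{eqn:FProperties} yields the pointwise bound
\[
|\nabla_x \bv|^2 \le C h_T^{-2}|\hat\bv|^2 + C h_T^{-4}|\nabla_{\hat x}\hat\bv|^2.
\]
Changing variables once more, and invoking $\det(DF_T)\le C h_T^{2}$, I obtain
\[
|\bv|_{H^1(T)}^2 \le C\|\hat\bv\|_{L^2(\hat T)}^2 + C h_T^{-2}|\hat\bv|_{H^1(\hat T)}^2 \le C h_T^{-2}\|\hat\bv\|_{H^1(\hat T)}^2,
\]
and adding the $L^2$ estimate (absorbing it into the dominant $h_T^{-2}$ term for $h_T\lesssim 1$) produces the claim.

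The argument is essentially routine bookkeeping, so there is no real obstacle; the only point requiring attention is tracking the differing scalings of $A_T$ ($\sim h_T^{-1}$) and its derivative ($\sim h_T^{0}$) given by Lemma~\ref{lem:ATLem}, and checking that the combination with the $h_T^{-1}$ from $DF_T^{-1}$ and the $h_T^{2}$ from $\det(DF_T)$ leaves exactly one surplus factor of $h_T^{-1}$ in the final bound.
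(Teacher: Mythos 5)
Your argument is correct and is essentially the paper's proof with the change of variables carried out by hand: the paper invokes Lemma \ref{lem:scaling} to pass to norms of $A_T\hat\bv$ on $\hat T$ and then applies the product rule with Lemma \ref{lem:ATLem}, which is exactly the bookkeeping you perform explicitly via the chain rule and the Jacobian bounds in \eqref{eqn:FProperties}. The scalings ($|A_T|\lesssim h_T^{-1}$, $|A_T|_{W^{1,\infty}}\lesssim 1$, $|DF_T^{-1}|\lesssim h_T^{-1}$, $\det(DF_T)\simeq h_T^{2}$) are all tracked correctly and yield the stated $Ch_T^{-1}$ factor.
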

\begin{proof}
By a change of variables, the chain rule,  Lemma \ref{lem:ATLem}, and Lemma \ref{lem:scaling}, we have
\begin{align*}
%
\| \bv\|_{H^1(T)} &\le C (|  A_T \hat \bv|_{H^1(\hat T)}+h_T \|A_T \hat \bv\|_{L^2(\hat T)})\\
&\le C (\|A_T\|_{L^\infty(\hat T)} \|\hat \bv\|_{H^1(\hat T)}
+ \|A_T\|_{W^{1,\infty}(\hat T)} \|\hat \bv\|_{L^2(\hat T)})\le C h_T^{-1}\|\hat \bv\|_{H^1(\hat T)}.
\end{align*}\hfill
\end{proof}

\subsection{Degrees of freedom for $\bV(T)$}
The canonical (nodal) degrees of freedom (DOFs) of the quadratic
Lagrange finite element space on $T^{ct}$ are a given function's values
at the (four) vertices in $T^{ct}$, and its values
at the (six) edge midpoints in $T^{ct}$.  
Here, we
show that these Lagrange DOFs form a unisolvent
set over $\bV(T)$.

Let $\mathcal{N}_{\hat T} := \{\hat a_i\}_{i=1}^{10}$ denote the set of (four) vertices and (six) edge
midpoints in $\hat T^{ct}$.  We let $\mathcal{N}_T:=\{a_i\}_{i=1}^{10}$ and $\mathcal{N}_{\tilde T}:=\{\tilde a_i\}_{i=1}^{10}$ 
be the corresponding
sets on $T^{ct}$ and $\tilde T^{ct}$, respectively, with $a_i = F_T(\hat a_i)$, and $\tilde a_i = F_{\tilde T}(\hat a_i)$.

\begin{lemma}\label{lem:bvnQuad}
A function $\bv\in \bV(T)$ is uniquely determined by the values $\bv(a)$ for all $a\in \mathcal{N}_T$.
\end{lemma}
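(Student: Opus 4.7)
The plan is to prove unisolvence by a standard dimension count plus an injectivity argument, exploiting the fact that the Piola transform $\hat\bv \mapsto A_T \hat\bv$ is pointwise invertible.

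First, I would establish the two dimensions in question. On one hand, the number of scalar conditions imposed by the DOFs is $2|\mathcal{N}_T| = 20$, since $|\mathcal{N}_T|=10$ and $\bv$ is vector-valued. On the other hand, $\dim \hat\bV = 20$: the space $\hat\bV$ is the continuous, vector-valued, piecewise quadratic Lagrange space on the Clough--Tocher split $\hat T^{ct}$, and its canonical Lagrange nodes are precisely the $10$ points in $\mathcal{N}_{\hat T}$ (the four vertices of $\hat T^{ct}$ and its six edge midpoints). Since the Piola-type map $\hat\bv\mapsto A_T \hat\bv$ is linear, and since $A_T(\hat x)$ is invertible everywhere (note $\det A_T(\hat x) = 1/\det(DF_T(\hat x))\ne 0$ by \eqref{eqn:FProperties}), this map is injective, so $\dim \bV(T) = \dim \hat\bV = 20$, matching the DOF count.

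Next I would show that the evaluation map $\bv\mapsto (\bv(a))_{a\in \mathcal{N}_T}$ is injective on $\bV(T)$; by the dimension count, this suffices. Suppose $\bv\in \bV(T)$ satisfies $\bv(a_i)=0$ for every $a_i\in \mathcal{N}_T$. Writing $\bv = A_T \hat\bv$ for some $\hat\bv\in \hat\bV$ and using $a_i = F_T(\hat a_i)$, the nodal condition becomes
\[
A_T(\hat a_i)\,\hat\bv(\hat a_i) = 0, \qquad i=1,\dots,10.
\]
Since $A_T(\hat a_i)$ is invertible, this forces $\hat\bv(\hat a_i)=0$ for every $\hat a_i\in \mathcal{N}_{\hat T}$. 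But $\mathcal{N}_{\hat T}$ is exactly a unisolvent set of Lagrange nodes for $\hat\bV$, so $\hat\bv\equiv 0$ on $\hat T$, and therefore $\bv\equiv 0$ on $T$.

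There is no real obstacle here; the only thing one must be a little careful about is the unisolvence of $\hat \bV$ itself under the Lagrange DOFs on $\hat T^{ct}$, which is the classical fact that the $P_2$ continuous Lagrange element is unisolvent with respect to vertex and edge-midpoint values on any triangulation (here, the three subtriangles of the Clough--Tocher split). Invoking that standard fact, together with the pointwise invertibility of $A_T$, closes the argument.
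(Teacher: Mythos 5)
Your proof is correct and follows essentially the same route as the paper: a $20$-versus-$20$ dimension count, followed by the observation that vanishing at the nodes forces $A_T(\hat a)\hat\bv(\hat a)=0$, hence $\hat\bv(\hat a)=0$ by invertibility of $A_T$, hence $\hat\bv\equiv 0$ by unisolvence of the Lagrange nodes on $\hat T^{ct}$. The extra justifications you supply (injectivity of the Piola map giving $\dim\bV(T)=20$, and $\det A_T = 1/\det(DF_T)\neq 0$) are correct and only make explicit what the paper leaves implicit.
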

\begin{proof}
The number of DOFs given is $20$ which matches in the dimension of $\bV(T)$.
Thus, it suffices to show that if $\bv\in \bV(T)$ vanishes on the DOFs, then $\bv\equiv 0$.

Write $\bv(x) = A_T(\hat x) \hat \bv(\hat x)$ for some $\hat \bv\in \hat \bV$.
We then have
\[
 0 = \bv(a) = A_T(\hat a)\hat \bv(\hat a)\qquad \forall a\in \mathcal{N}_T.
 \]
 Because $A_T(\hat a)$ is invertible, we conclude $\hat \bv(\hat a)=0$ for all $\hat a\in \mathcal{N}_{\hat T}$.
 Since $\hat \bv$ is uniquely determined by these values, we conclude
 $\hat \bv\equiv 0$, and therefore $\bv\equiv 0$.
\end{proof}

\begin{lemma}\label{lem:NormDOFs}
There holds, for all $\bv\in \bV(T)$,
\begin{align*}
\|\bv\|_{H^1(T)}^2\le C \sum_{a\in \mathcal{N}_T} |\bv(a)|^2.
\end{align*}
\end{lemma}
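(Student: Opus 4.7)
The plan is a standard scaling argument that pulls everything back to the reference macro-element $\hat T$, exploits norm equivalence on the finite-dimensional space $\hat \bV$, and then transports the nodal-value bound back to $T$ using the pointwise relation between $\bv$ and $\hat\bv$ induced by the Piola transform.

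First, I would write $\bv(x)=A_T(\hat x)\hat\bv(\hat x)$ for some $\hat\bv\in\hat\bV$ and apply Lemma~\ref{lem:H1Piola} to obtain
\[
\|\bv\|_{H^1(T)}\le C h_T^{-1}\|\hat\bv\|_{H^1(\hat T)}.
\]
Next, since $\hat\bV$ is a finite-dimensional space and Lemma~\ref{lem:bvnQuad} (applied on $\hat T$, i.e., with $F_T=\mathrm{id}$) shows that the map $\hat\bv\mapsto(\hat\bv(\hat a_i))_{i=1}^{10}$ is a bijection from $\hat\bV$ to $\mathbb{R}^{20}$, the functional $\hat\bv\mapsto \bigl(\sum_{\hat a\in\mathcal{N}_{\hat T}}|\hat\bv(\hat a)|^2\bigr)^{1/2}$ defines a norm on $\hat\bV$ equivalent to $\|\cdot\|_{H^1(\hat T)}$. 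Hence
\[
\|\hat\bv\|_{H^1(\hat T)}^2\le C\sum_{\hat a\in\mathcal{N}_{\hat T}}|\hat\bv(\hat a)|^2.
\]

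Finally, I would relate $\hat\bv(\hat a)$ to $\bv(a)$. Since $\bv(a)=A_T(\hat a)\hat\bv(\hat a)$ and $A_T(\hat a)$ is invertible, one has $\hat\bv(\hat a)=A_T(\hat a)^{-1}\bv(a)$, so by the bound $\|A_T^{-1}\|_{L^\infty(\hat T)}\le C h_T$ from Lemma~\ref{lem:ATLem} (the $m=0$ case),
\[
|\hat\bv(\hat a)|\le C h_T|\bv(a)|.
\]
Chaining the three estimates gives
\[
\|\bv\|_{H^1(T)}^2\le C h_T^{-2}\|\hat\bv\|_{H^1(\hat T)}^2\le C h_T^{-2}\sum_{\hat a}|\hat\bv(\hat a)|^2\le C h_T^{-2}\cdot h_T^2\sum_{a\in\mathcal{N}_T}|\bv(a)|^2,
\]
and the powers of $h_T$ cancel exactly, yielding the claim.

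There is no real obstacle here; the only thing to watch is that all scalings balance. The $h_T^{-1}$ factor from the Piola-based $H^1$ bound in Lemma~\ref{lem:H1Piola} is precisely compensated by the $h_T$ factor in the pointwise bound on $A_T^{-1}$ from Lemma~\ref{lem:ATLem}. This is the essential reason the estimate is $h_T$-uniform despite the mapping not being affine.
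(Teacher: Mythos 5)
Your argument is correct and coincides with the paper's proof: both write $\bv = A_T\hat\bv$, use norm equivalence of the nodal values with $\|\cdot\|_{H^1(\hat T)}$ on the finite-dimensional space $\hat\bV$, insert $A_T^{-1}A_T$ at the nodes with the bound $\|A_T^{-1}\|_{L^\infty(\hat T)}\le Ch_T$, and finish with Lemma~\ref{lem:H1Piola}, the $h_T^{-1}$ and $h_T$ factors cancelling exactly as you note.
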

\begin{proof}
Again, we write $\bv(x) = A_T(\hat x)\hat \bv(\hat x)$ with $A_T(\hat x) = DF_T(\hat x)/\det(DF_T(\hat x))$
for some $\hat \bv\in \hat \bV$.  By equivalence of norms in a finite dimensional setting, 
and the estimate $\|A_T^{-1}\|_{L^\infty(\hat T)}\le C h_T$, we have
\begin{align*}
\|\hat \bv\|_{H^1(\hat T)}^2
&\le C \sum_{\hat a\in \mathcal{N}_{\hat T}} |\hat \bv(\hat a)|^2
= C \sum_{\hat a \in \mathcal{N}_{\hat T}} |A_T^{-1}(\hat a) A_T(\hat a) \hat \bv(\hat a)|^2\\
&\le Ch_T^2 \sum_{\hat a \in \mathcal{N}_{\hat T}} |A_T(\hat a) \hat \bv(\hat a)|^2
= Ch_T^2 \sum_{ a \in \mathcal{N}_{ T}} |\bv(a)|^2.
\end{align*}
Therefore by Lemma \ref{lem:H1Piola}, 
\begin{align*}
\|\bv\|_{H^1(T)}^2\le C \|A_T \hat \bv\|^2_{H^1(\hat T)}
\le C\|A_T\|_{W^{1,\infty}(\hat T)}^2\|\hat \bv\|_{H^1(\hat T)}^2
\le C h_T^{-2} \|\hat \bv\|_{H^1(\hat T)}^2\le C \sum_{ a \in \mathcal{N}_{ T}} |\bv(a)|^2.
\end{align*}\hfill
\end{proof}

\begin{lemma}\label{lem:Interp}
For $T\in \mct$, let $\bI_T:\bH^3(T)\to \bV(T)$ be uniquely determined by the conditions
\[
(\bI_T \bu)(a) = \bu(a)\qquad \forall a\in \mathcal{N}_T.
\]
Then there holds
\begin{align*}
\|\bu-\bI_T \bu\|_{H^m(T)}\le C h_T^{3-m} \|\bu\|_{H^3(T)}\qquad \forall \bu\in \bH^3(T),\quad m=0,1.
\end{align*}
\end{lemma}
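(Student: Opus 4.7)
The plan is to reduce everything to the reference configuration by exploiting the Piola structure in the definition of $\bV(T)$. Given $\bu\in \bH^3(T)$, I first set $\hat\bu(\hat x) := A_T^{-1}(\hat x)\, \bu(F_T(\hat x))$ so that, by construction, $\bu(x) = A_T(\hat x)\hat\bu(\hat x)$. Let $\hat \bI:\bH^3(\hat T)\to \hat\bV$ denote the standard piecewise-quadratic nodal Lagrange interpolant associated with the set $\mathcal{N}_{\hat T}$. Since $A_T(\hat a)$ is invertible at every node, the field $A_T(\hat x)(\hat \bI\hat\bu)(\hat x)$ lies in $\bV(T)$ and satisfies the nodal conditions
\[
A_T(\hat a)(\hat \bI\hat\bu)(\hat a) \;=\; A_T(\hat a)\hat\bu(\hat a)\;=\;\bu(a),\qquad a=F_T(\hat a)\in\mathcal{N}_T.
\]
Unisolvence of the DOFs (Lemma \ref{lem:bvnQuad}) then forces $\bI_T\bu = A_T\, \hat \bI\hat\bu$, and so the error has the clean factorized form $\bu - \bI_T\bu = A_T(\hat\bu - \hat \bI\hat\bu)$.

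Next I would scale back to $T$. On each $K\in T^{ct}$, Lemma \ref{lem:scaling} applied to $\bw = \bu - \bI_T\bu$ with pullback $\hat\bw = A_T(\hat\bu - \hat \bI\hat\bu)$, combined with the product rule and the bounds $|A_T|_{W^{j,\infty}(\hat T)}\le Ch_T^{j-1}$ from Lemma \ref{lem:ATLem}, yields after a short bookkeeping of $h_T$-powers
\[
|\bu - \bI_T\bu|_{H^m(K)} \;\le\; C\sum_{l=0}^m h_T^{-l}\,|\hat\bu - \hat \bI\hat\bu|_{H^l(\hat K)}.
\]
Summing over the three macro pieces $K\in T^{ct}$, and noting that for $l\in\{0,1\}$ the broken $H^l$ seminorm on $\hat T^{ct}$ coincides with the full $H^l$ seminorm on $\hat T$ (since $\hat\bu - \hat \bI\hat\bu\in \bH^1(\hat T)$), gives the reference-level bound
\[
\|\bu - \bI_T\bu\|_{H^m(T)} \;\le\; C\, h_T^{-m}\,\|\hat\bu - \hat \bI\hat\bu\|_{H^m(\hat T)},\qquad m=0,1.
\]

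To close the argument, observe that $\bpol_2(\hat T)\subset \hat \bV$ and $\hat \bI$ preserves $\bpol_2(\hat T)$; the Bramble--Hilbert lemma on the fixed reference macro element therefore gives $\|\hat\bu - \hat \bI\hat\bu\|_{H^m(\hat T)}\le C|\hat\bu|_{H^3(\hat T)}$. To translate $|\hat\bu|_{H^3(\hat T)}$ into a bound on $\bu$, I would exploit the key consequence of Lemma \ref{lem:ATLem} that $A_T^{-1}$ is affine on $\hat T$, so all its derivatives of order $\ge 2$ vanish. The product rule then yields only two surviving terms,
\[
|\hat\bu|_{H^3(\hat T)}
\;\le\; \|A_T^{-1}\|_{L^\infty(\hat T)}\,|\bu\circ F_T|_{H^3(\hat T)} \;+\; C\,|A_T^{-1}|_{W^{1,\infty}(\hat T)}\,|\bu\circ F_T|_{H^2(\hat T)},
\]
and using $\|A_T^{-1}\|_{L^\infty}\le Ch_T$, $|A_T^{-1}|_{W^{1,\infty}}\le Ch_T^2$ together with the scaling $|\bu\circ F_T|_{H^k(\hat T)}\le Ch_T^{k-1}\|\bu\|_{H^k(T)}$ derived from Lemma \ref{lem:scaling} collapses both contributions into $C h_T^3\|\bu\|_{H^3(T)}$. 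Chaining the three estimates delivers $\|\bu - \bI_T\bu\|_{H^m(T)}\le C h_T^{3-m}\|\bu\|_{H^3(T)}$, as claimed.

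The main technical obstacle is keeping the $h_T$-bookkeeping honest across the Piola pullback: the matrix $A_T$ blows up like $h_T^{-1}$ in $L^\infty$ while $A_T^{-1}$ is small, of order $h_T$, and the Jacobian factors hidden in Lemma \ref{lem:scaling} compound the risk of losing a full power of $h_T$ at either the scaling step or in the product-rule expansion. The saving grace is precisely the degree-one polynomial structure of $A_T^{-1}$ from Lemma \ref{lem:ATLem}, which annihilates the otherwise dangerous third-order product-rule terms in $|\hat\bu|_{H^3(\hat T)}$ and is what ultimately produces the optimal exponent $3-m$ rather than a degraded one.
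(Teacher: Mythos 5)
Your proposal is correct and follows essentially the same route as the paper's proof: pull back through the Piola map so that $\bI_T\bu = A_T\,\hat\bI\hat\bu$, apply the Bramble--Hilbert estimate on the reference macro element, scale back using Lemmas \ref{lem:scaling} and \ref{lem:ATLem}, and finally control $|\hat\bu|_{H^3(\hat T)}$ by $Ch_T^3\|\bu\|_{H^3(T)}$ via the identity $\hat\bu = A_T^{-1}(A_T\hat\bu)$ together with the vanishing of the higher derivatives of $A_T^{-1}$. The $h_T$-bookkeeping matches the paper's at every step.
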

\begin{proof}
Let $\bu\in \bH^3(T)$, and for notational convenience, we set $\bv = \bI_T \bu$.

Write 
\[
\bv(x) = (A_T \hat \bv)(\hat x),\qquad \bu(x) = (A_T \hat \bu)(\hat x)
\]
with $\hat \bv\in \hat \bV$ and $\hat \bu\in \bH^3(\hat T)$.
By definition of $\bI_T \bu$ and the nodal points, we find
\[
(A_T \hat \bv)(\hat a) = (A_T \hat \bu)(\hat a)\qquad \forall \hat a\in \mathcal{N}_{\hat T}.
\]
Therefore, because $A_T$ is invertible, $\hat \bv(\hat a) = \hat \bu(\hat a)$ for all $\hat a\in \mathcal{N}_{\hat T}$, i.e.,
$\hat \bv$ is the quadratic Lagrange nodal interpolant of $\hat \bu$ with respect 
to the local triangulation $\hat T^{ct}$.  It then follows from standard interpolation theory that
\[
\|\hat \bu - \hat \bv\|_{H^m(\hat T)}\le C |\hat \bu|_{H^3(\hat T)}.
\]
Applying Lemmas \ref{lem:scaling} and \ref{lem:ATLem} then yields
\begin{align*}
|\bu - \bv|_{H^m(T)}
&\le 
C h_T^{1-m} \|A_T(\hat \bu-\hat \bv)\|_{H^m(\hat T)}
\le C h_T^{1-m}\|A_T\|_{W^{m,\infty}(\hat T)} \|\hat \bu-\hat \bv\|_{H^m(\hat T)}
\le C h_T^{-m} |\hat \bu|_{H^3(\hat T)}.
\end{align*}
Finally, we once again use Lemmas \ref{lem:ATLem} and \ref{lem:scaling} to obtain
\begin{align*}
|\hat \bu|_{H^3(\hat T)} 
 = |A_T^{-1} A_T \hat \bu|_{H^3(\hat T)}
&\le C\big(\|A_T^{-1}\|_{L^\infty(\hat T)} |A_T \hat \bu|_{H^3(\hat T)} + |A_T^{-1}|_{W^{1,\infty}(\hat T)} |A_T \hat \bu|_{H^2(\hat T)}\big)\\
&\le C\big(h_T|A_T \hat \bu|_{H^3(\hat T)} +h_T^2 |A_T \hat \bu|_{H^2(\hat T)}\big)
\le C h_T^3 \|\bu\|_{H^3(T)}.
\end{align*}\hfill
\end{proof}

\subsection{A connection between local finite element spaces}
In this section, we explicitly identify
a correspondence between piecewise polynomials
defined on the affine local triangulation $\tilde T^{ct}$
and functions on $T^{ct}$ with $T = G_h(\tilde T)$.
This connection will be used to prove global
inf-sup stability in the subsequent section.

\begin{definition}\label{def:OpDef}
Let $\tilde T\in \tilde \calT_h$ and $T\in \calT_h$ with $T = G_h(\tilde T)$.
\begin{enumerate}
\item We define the operator $\Psi_T:\tilde \bV(\tilde T)\to \bV(T)$ uniquely 
by the conditions
\begin{alignat*}{2}
(\Psi_T \tilde \bv)(a) = \tilde \bv(\tilde a)\qquad \forall \tilde a\in \mathcal{N}_{\tilde T},\quad \text{where } a = G_h(\tilde a).
\end{alignat*}
\item We define the operator $\Upsilon_T:\tilde Q(\tilde T)\to Q(T)$ as
\begin{align*}
(\Upsilon_T \tilde q)(x) =  \tilde q(F_{\tilde T}(\hat x)).
\end{align*}
\end{enumerate}
\end{definition}

\begin{theorem}\label{thm:PsiDef}\
\begin{enumerate}
\item If $F_T$ is affine, then $(\Psi_T\tilde \bv)(x) = \tilde \bv(\tilde x)$, in particular, $\Psi_T$ is the identity operator.
\item {If $e\subset \p T$ is a straight edge, so that $e\subset \p \tilde T$, then
\[
(\Psi_T \tilde \bv)\cdot \bn|_e = \tilde \bv \cdot \bn|_e.
\]
}
\item There holds $\|\Psi_T \tilde \bv\|_{H^1(T)}\le C \|\tilde \bv\|_{H^1(\tilde T)}$. 
\end{enumerate}
\end{theorem}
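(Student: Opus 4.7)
For part (1), my plan is to observe that when $F_T$ is affine, the map $G_h|_{\tilde T} = F_T\circ F_{\tilde T}^{-1}$ is a composition of two affine maps, hence affine; since $G_h$ fixes the three vertices of $\tilde T$, it must equal the identity on $\tilde T$. This forces $T = \tilde T$, $\bV(T) = \tilde\bV(\tilde T)$, and $a = \tilde a$ at every node, so that $\Psi_T \tilde v$ and $\tilde v$ lie in the same space and share all ten Lagrange values. Lemma \ref{lem:bvnQuad} then gives $\Psi_T \tilde v = \tilde v$, which rewritten as $\Psi_T \tilde v(x) = \tilde v(\tilde x)$ is the claim.

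For part (2), my plan is to compare the two quadratic polynomials $(\Psi_T \tilde v)\cdot \bn|_e$ and $\tilde v\cdot \bn|_e$ at three common points on the straight edge $e$. The two endpoints are vertices of $\tilde T$ where $G_h$ is the identity, so $a = \tilde a$ there. For the midpoint, both $F_T|_{\hat e}$ and $F_{\tilde T}|_{\hat e}$ are affine (the former because $e$ is straight) and both therefore send the midpoint of $\hat e$ to the geometric midpoint of $e$, giving $a_{\rm mid} = \tilde a_{\rm mid}$. At these three coinciding nodes the defining property $(\Psi_T \tilde v)(a) = \tilde v(\tilde a) = \tilde v(a)$ finishes the argument.

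Part (3) is the substantive one. The direct route through Lemma \ref{lem:NormDOFs} combined with a discrete nodal bound $\sum_{\tilde a}|\tilde v(\tilde a)|^2 \le C h_T^{-2}\|\tilde v\|_{L^2(\tilde T)}^2$ loses an extra factor $h_T^{-1}$ (as can be seen by testing with $\tilde v\equiv c$, where $\sum|\tilde v(\tilde a)|^2 = O(1)$ but $\|\tilde v\|_{H^1(\tilde T)}^2 = O(h_T^2)$). So I would instead exploit the algebraic fact that $A_T^{-1}(\hat x) = \operatorname{adj}(DF_T(\hat x))$ is \emph{affine} in $\hat x$, since $F_T$ is quadratic. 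Writing $\Psi_T\tilde v = A_T\hat w$ with $\hat w\in \hat\bV$ determined by $\hat w(\hat a) = A_T^{-1}(\hat a)\hat v(\hat a)$, and setting $\hat r := A_T^{-1}\hat v - \hat w$, the identity $A_T A_T^{-1} = I$ yields the decomposition
\[
\Psi_T \tilde v(x) = (\tilde v \circ G_h^{-1})(x) - (A_T\hat r)\circ F_T^{-1}(x).
\]
The first term is bounded by $C\|\tilde v\|_{H^1(\tilde T)}$ using the uniform bounds on $DG_h, DG_h^{-1}$ together with a standard change of variables.

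The main obstacle is the Piola correction. The proof of Lemma \ref{lem:H1Piola} uses only $\hat r \in H^1(\hat T)$, giving $\|(A_T\hat r)\circ F_T^{-1}\|_{H^1(T)}\le C h_T^{-1}\|\hat r\|_{H^1(\hat T)}$; so the target is $\|\hat r\|_{H^1(\hat T)}\le C h_T^2 \|\hat v\|_{H^1(\hat T)}$. On each sub-element $\hat K \in \hat T^{\rm ct}$, $\hat r|_{\hat K}$ is exactly the quadratic Lagrange interpolation error of the cubic function $A_T^{-1}\hat v|_{\hat K}$, so a reference interpolation estimate gives $\|\hat r\|_{H^1(\hat K)}\le C|A_T^{-1}\hat v|_{H^3(\hat K)}$. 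Leibniz collapses the third derivative to the single surviving cross term $\partial A_T^{-1}\cdot\partial^2 \hat v$ (others vanish since $A_T^{-1}$ is affine and $\hat v|_{\hat K}$ is quadratic), and Lemma \ref{lem:ATLem} bounds this by $C h_T^2 |\hat v|_{H^2(\hat K)}\le Ch_T^2\|\hat v\|_{H^1(\hat K)}$ after applying norm equivalence on $\bpol_2(\hat K)$. Combining with the affine scaling $\|\hat v\|_{H^1(\hat T)}\le Ch_T^{-1}\|\tilde v\|_{H^1(\tilde T)}$ from Lemma \ref{lem:scaling} then closes the chain with the correct powers, $h_T^{-1}\cdot h_T^2\cdot h_T^{-1} = 1$, delivering the $h$-uniform bound.
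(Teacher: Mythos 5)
Your arguments for parts (1) and (2) are the same as the paper's: affineness of $F_T$ forces $G_h|_{\tilde T}$ to be the identity and reduces (1) to the unisolvence in Lemma \ref{lem:bvnQuad}, and (2) is the identical three-point comparison of two quadratic polynomials on the straight edge. No issues there.

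Part (3) is where you genuinely depart from the paper, and your route is the more robust one. The paper's own proof is precisely the ``direct route'' you reject: it chains Lemma \ref{lem:NormDOFs} with $\sum_{a}|\bv(a)|^2=\sum_{\tilde a}|\tilde\bv(\tilde a)|^2=\sum_{\hat a}|\hat{\tilde\bv}(\hat a)|^2\le C\|\hat{\tilde\bv}\|^2_{H^1(\hat T)}\le C\|\tilde\bv\|^2_{H^1(\tilde T)}$, and the last inequality in that chain is not $h$-uniform, since by Lemma \ref{lem:scaling} the $L^2$ part scales like $\|\hat{\tilde\bv}\|_{L^2(\hat T)}\approx h_T^{-1}\|\tilde\bv\|_{L^2(\tilde T)}$; your test function $\tilde\bv\equiv c$ exhibits the loss exactly (left side $O(1)$, right side $O(h_T^2)$). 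The theorem is still true, because for $\bv=\Psi_T\tilde\bv$ the bound in Lemma \ref{lem:NormDOFs} is slack by the compensating factor, but the scale-free quantity $\sum_{\tilde a}|\tilde\bv(\tilde a)|^2$ cannot distinguish near-constant data from oscillatory data, so the written chain cannot recover the constant mode. Your decomposition $\Psi_T\tilde\bv=\tilde\bv\circ G_h^{-1}-(A_T\hat r)\circ F_T^{-1}$, with $\hat r$ the piecewise-quadratic interpolation error of the piecewise-cubic function $A_T^{-1}\hat{\tilde\bv}$ (cubic precisely because $A_T^{-1}={\rm adj}(DF_T)$ is affine), repairs this: the composition term is handled by $\|DG_h^{\pm1}\|_{W^{1,\infty}}\le C$, the correction by Bramble--Hilbert, Leibniz, and $|A_T^{-1}|_{W^{1,\infty}(\hat T)}\le Ch_T^2$, and the exponents $h_T^{-1}\cdot h_T^{2}\cdot h_T^{-1}$ close to an $h$-independent constant. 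The only points worth making explicit are that Lemma \ref{lem:H1Piola} applies to $\hat r\notin\hat\bV$ (its proof uses only $\hat r\in\bH^1(\hat T)$) and that $\hat r$ is globally $H^1$ on $\hat T$ (both $A_T^{-1}\hat{\tilde\bv}$ and its nodal interpolant are continuous across the internal edges of $\hat T^{ct}$). What the paper's approach buys is brevity and reuse of Lemma \ref{lem:NormDOFs}; what yours buys is an argument that actually delivers the $h$-uniform constant, and it is structurally the same estimate the paper later needs, and proves by similar means, in Lemma \ref{lem:EhApprox}.
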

\begin{proof}
For notational simplicity, we set $\bv = \Psi_T \tilde \bv\in \bV(T)$.

\begin{enumerate}
\item If $F_T$ is affine, so that $DF_T$ is constant, we have $\bV(T) = \tilde \bV(\tilde T)$.
We then conclude that $(\Psi_T\tilde \bv) = \tilde \bv$ by Lemma \ref{lem:bvnQuad}.

\item Let $e\subset \p T$ be a straight edge with outward unit normal $\bn$, endpoints $a_1$ and $a_2$,
and midpoint $a_3$.   
Then $e\subset \p \tilde T$ and
\begin{align*}
(\bv\cdot \bn)(a_1) = (\tilde \bv\cdot \bn)(a_1),\quad
(\bv\cdot \bn)(a_2) = (\tilde \bv\cdot \bn)(a_2),\quad (\bv \cdot \bn)(a_3) = (\tilde \bv \cdot \bn)(a_3).
\end{align*}
By Lemma \ref{lem:bvnQuad}, $\bv\cdot \bn|_e$ and $\tilde \bv\cdot \bn|_e$
are both quadratic polynomials, and therefore, these conditions imply $\bv\cdot \bn|_e = \tilde \bv\cdot \bn|_e$.

\item Set $\hat{\tilde \bv}(\hat x) = \tilde \bv(\tilde x)$ with $\tilde x = F_{\tilde T}(\hat x)$.
Using Lemma \ref{lem:NormDOFs} and a standard scaling argument, we have
\begin{align*}
\|\bv\|_{H^1(T)}^2
&\le C\sum_{a\in \mathcal{N}_T} |\bv(a)|^2 = C\sum_{\tilde a\in \mathcal{N}_{\tilde T}} |\tilde \bv(\tilde a)|^2
 = C\sum_{\hat  a\in \mathcal{N}_{\hat  T}} |\hat{\tilde \bv}(\hat a)|^2\le C \|\hat{\tilde \bv}\|_{H^1(\hat T)}^2\le C \|\tilde \bv\|_{H^1(\tilde T)}^2.
\end{align*}
\end{enumerate}\hfill
\end{proof}

\subsection{Local Inf-sup stability}
In this section, we derive an indirect local inf-sup 
stability result of the pair $\bV_0(T)\times Q_0(T)$.
As a first step, we use the stability of 
the analogous pair $\hat \bV_0\times \hat Q_0$ defined
on the reference triangle.  The proof of the following lemma
is found in, e.g., \cite{ArnoldQin92,GuzmanNeilan18}.
\begin{lemma}\label{lem:RefInfSup}
For any $\hat q \in \hat {Q}_{0}$, 
there exists $\hat \bv \in \hat {\bV}_0$ such that $\hat \nab \cdot \hat \bv=\hat q$ with the bound $\|\hat \bv\|_{H^1(\hat T)}\le C \|\hat q\|_{L^2(\hat T)}$.
\end{lemma}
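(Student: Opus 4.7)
The plan is to prove that the divergence map $\hat\nabla\cdot:\hat\bV_0\to \hat Q_0$ is surjective; once surjectivity is established, existence of a bounded right inverse $\hat q\mapsto \hat\bv$ satisfying the claimed estimate is automatic in the finite-dimensional setting, with the constant depending only on the reference configuration $\hat T^{ct}$.

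First I check $\hat\nabla\cdot \hat\bV_0\subset \hat Q_0$. For any $\hat\bv\in\hat\bV_0$, the function $\hat\nabla\cdot\hat\bv$ is piecewise linear on $\hat T^{ct}$ by definition of $\hat\bV$, and integration by parts yields $\int_{\hat T}\hat\nabla\cdot \hat\bv\,d\hat x = \int_{\p\hat T}\hat\bv\cdot\hat\bn\,d\hat s = 0$ since $\hat\bv|_{\p\hat T}=0$. So the image lies in $\hat Q_0$.

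Next I match dimensions. The Lagrange DOFs for $\hat\bV$ on $\hat T^{ct}$ are values at the three exterior vertices of $\hat T$, the barycenter, the three exterior edge midpoints, and the three interior edge midpoints; this gives ten vector nodes and $\dim\hat\bV=20$. Imposing a vanishing trace on $\p\hat T$ kills the six boundary nodes and leaves $\dim\hat\bV_0=8$. Three copies of $\pol_1$ yield $\dim\hat Q=9$, and the mean-zero constraint reduces this to $\dim\hat Q_0=8$. Surjectivity is therefore equivalent to showing that $\hat\nabla\cdot$ has trivial kernel on $\hat\bV_0$.

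The main step, and the principal obstacle, is ruling out nonzero divergence-free functions in $\hat\bV_0$. Any such $\hat\bv$ can be written as the curl of a scalar potential $\hat\phi$ which is $C^1$ and piecewise cubic on $\hat T^{ct}$, i.e., $\hat\phi$ lies in the Hsieh-Clough-Tocher space. The boundary condition $\hat\bv|_{\p\hat T}=0$ forces $\hat\phi$ and $\hat\nabla\hat\phi$ both to vanish on $\p\hat T$, which annihilates every standard HCT degree of freedom: the value together with the full gradient at each of the three vertices of $\hat T$ (nine conditions), plus the outward normal derivative at each of the three exterior edge midpoints (three conditions). These twelve conditions exhaust the dimension of HCT, so $\hat\phi\equiv 0$ and hence $\hat\bv\equiv 0$. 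The kernel is therefore trivial, surjectivity follows from the dimension count, and the norm bound is obtained either by choosing $\hat\bv$ in the orthogonal complement of the kernel of $\hat\nabla\cdot$ (trivial here) or by equivalence of norms on the finite-dimensional space $\hat\bV_0$. A detailed exposition of this kernel characterization appears in \cite{ArnoldQin92,GuzmanNeilan18}, which I would cite to close the argument rather than reproducing it.
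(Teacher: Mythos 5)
Your proposal is correct and is essentially the argument the paper defers to: the paper does not prove this lemma itself but cites \cite{ArnoldQin92,GuzmanNeilan18}, and those references establish surjectivity of $\hat\nabla\cdot:\hat\bV_0\to\hat Q_0$ by exactly your route --- the dimension count $\dim\hat\bV_0=\dim\hat Q_0=8$ combined with the identification of divergence--free fields as curls of Hsieh--Clough--Tocher potentials, whose twelve degrees of freedom are annihilated by the boundary condition (after normalizing the additive constant in $\hat\phi$ so that it vanishes on $\p\hat T$). The norm bound then follows from finite dimensionality on the fixed reference configuration, as you note.
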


\begin{theorem}\label{thm:LocalInfSup} Given $q\in {Q}_0(T)$, then
there exists $ \bv\in {\bV}_0(T) $  such that 
\[
{(\nab \cdot \bv)(x) = \frac{h_T^2 q(x)}{\det(DF_T(F_T^{-1}(x)))}},\quad \text{and}\quad\| \bv\|_{H^1(T)}\le C\|q\|_{L^2(T)}.
\]
\end{theorem}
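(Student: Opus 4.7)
The plan is to pull the divergence equation back to the reference triangle via the Piola transform and apply the known reference-element stability, Lemma \ref{lem:RefInfSup}. The essential tool is the standard Piola divergence identity: if $\bv(x) = A_T(\hat x)\hat \bv(\hat x)$ with $A_T = DF_T/\det(DF_T)$, then
\[
(\nab \cdot \bv)(x) = \frac{(\hnab \cdot \hat \bv)(\hat x)}{\det(DF_T(\hat x))}.
\]
This identity is not stated explicitly elsewhere in the excerpt, so I would record it up front as a well-known property of the Piola transform (see, e.g., \cite{MonkBook}), and note that it is exactly the reason $A_T$ (rather than composition) is used to define $\bV(T)$.

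Given $q\in Q_0(T)$, I would first write $q(x) = \hat q(\hat x)$ with $\hat q \in \hat Q_0$. Multiplying by the constant $h_T^2$ preserves the zero-mean condition, so $h_T^2 \hat q \in \hat Q_0$, and Lemma \ref{lem:RefInfSup} yields $\hat \bv \in \hat \bV_0$ satisfying $\hnab \cdot \hat \bv = h_T^2 \hat q$ together with
\[
\|\hat \bv\|_{H^1(\hat T)} \le C h_T^2 \|\hat q\|_{L^2(\hat T)}.
\]
Then set $\bv(x) = A_T(\hat x)\hat \bv(\hat x)$. By construction $\bv \in \bV(T)$, and because $A_T$ is a pointwise matrix multiplier and $\hat \bv$ vanishes on $\p \hat T$, the function $\bv$ vanishes on $\p T$; hence $\bv \in \bV_0(T)$.

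Substituting $\hnab\cdot \hat \bv = h_T^2 \hat q$ into the Piola divergence identity and rewriting in $x$-coordinates gives the desired equation for $\nab \cdot \bv$ directly. For the norm bound, I would chain Lemma \ref{lem:H1Piola} with the $L^2$ scaling of Lemma \ref{lem:scaling} (applied on each sub-element of $\hat T^{ct}$ and summed), which gives $\|\hat q\|_{L^2(\hat T)} \le C h_T^{-1}\|q\|_{L^2(T)}$, to obtain
\[
\|\bv\|_{H^1(T)} \le C h_T^{-1} \|\hat \bv\|_{H^1(\hat T)} \le C h_T \|\hat q\|_{L^2(\hat T)} \le C\|q\|_{L^2(T)}.
\]
I do not foresee any substantive obstacle: the argument is essentially mechanical once the Piola divergence identity is in hand. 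The only real subtlety is the bookkeeping of powers of $h_T$, and the $h_T^2$ factor in the statement is exactly what is needed so that the factor $h_T^{-1}$ from Lemma \ref{lem:H1Piola} and the factor $h_T^{-1}$ from the $L^2$ scaling of $\hat q$ versus $q$ cancel to produce the dimensionally correct constant in the final bound.
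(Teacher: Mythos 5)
Your proposal is correct and follows the same route as the paper: pull back to the reference element, rescale $\hat q$ by $h_T^2$ so that Lemma \ref{lem:RefInfSup} applies, push forward with the Piola transform, and combine Lemma \ref{lem:H1Piola} with the $L^2$ scaling $\|\hat q\|_{L^2(\hat T)}\le C h_T^{-1}\|q\|_{L^2(T)}$ for the norm bound. The only difference is presentational — you state the Piola divergence identity explicitly up front, while the paper uses it silently in the computation.
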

\begin{proof}
Let $q \in {Q}_0(T)$.  Then there exists ${q} \in \hat {Q}_0$ such that 
{$q(x) = {\hat q(\hat x)}$}.  Because $h_T^2 \hat q \in \hat Q_0$,
by Lemma \ref{lem:RefInfSup}, 
there exists {$\hat{\bv} \in \hat{\bV}_0$ such that $\hat{\nab} \cdot \hat{\bv}=h_T^2\hat{q}$} and $\|\hat \bv\|_{H^1(\hat T)}\le C h_T^2 \|\hat q\|_{L^2(\hat T)}$.  Setting
$\bv(x) = A_T\hat \bv \in \bV_0(T)$, we compute
\begin{align*}
(\nab \cdot \bv)(x)=\frac{(\hat{\nab} \cdot \hat{\bv})(\hat{x})}{\det(DF_{T}(\hat x))}={\frac{h_T^2 \hat{q}(\hat{x})}{\det(DF_{T}(\hat x))}= \frac{h_T^2 q(x)}{\det(DF_T(F_T^{-1}(x)))}}.
\end{align*}
Applying Lemmas \ref{lem:RefInfSup} and \ref{lem:H1Piola} and a change of variables
yields 
\begin{align*}
\|\bv\|_{H^1(T)}\le C h_T^{-1} \|\hat \bv\|_{H^1(\hat T)}\le C h_T \|\hat q\|_{L^2(\hat T)} \le C \|q\|_{L^2(T)}.
\end{align*}\hfill
\end{proof}

\section{The Global Spaces}\label{sec:GlobalSpaces}
Define the Scott--Vogelius pair with respect to the affine triangulation $\tilde \calT_h$:
\begin{align*}
\tilde \bV^h & = \{\tilde \bv\in \bH^1_0(\tilde \Omega_h):\ \tilde \bv|_{\tilde T}\in \tilde \bV(\tilde T),\ \forall \tilde T\in \tilde \calT_h\},\qquad
\tilde Q^h  = \{\tilde q\in L^2_0(\tilde \Omega_h):\ \tilde q|_{\tilde T}\in \tilde Q(\tilde T),\ \forall \tilde T\in \tilde \calT_h\}.
\end{align*}
We construct the global spaces $\bV^h\times Q^h$ defined
on $\calT_h$ using the  spaces $\tilde \bV^h\times \tilde Q^h$
and with the aid of the operators $\Psi_T$ and $\Upsilon_T$ given in Definition \ref{def:OpDef}. 
To this end, we define $\Psi$ and $\Upsilon$ to be the operators given by
\[
\Psi|_T = \Psi_T,\qquad \Upsilon|_T = \Upsilon_T\qquad \forall T\in \calT_h.
\]
The global spaces, defined on the isoparametric mesh $\mct$, are then given by
\begin{align*}
\bV^h:&=\{\bv:\ \bv=\Psi \tilde \bv,\ \exists \tilde \bv\in \tilde \bV^h\},\qquad
Q^h: = \{q:\ q = \Upsilon \tilde q,\ \exists \tilde q\in \tilde Q^h\}.
\end{align*}
\begin{remark}
It is easy to see that the space $\bV^h$ is equivalently defined
as functions locally in $\bV(T)$ on each $T\in \mct$, are continuous
on the DOFs in Lemma \ref{lem:bvnQuad}, and vanish on $\p\Omega_h$.
\end{remark}

\begin{theorem}\label{thm:GlobalProperties}\
\begin{enumerate}
\item There holds $\bV^h\subset \bH_0({\rm div};\Omega_h) = \{\bv\in \bL^2(\Omega_h):\ \nab \cdot \bv\in L^2(\Omega_h),\ \bv\cdot \bn|_{\p\Omega_h}=0\}$.

\item There holds $q\in Q^h$ if and only if $q|_T\circ F_T \in \hat Q$ for all $T\in \mct$, and
\begin{align*}
\sum_{T\in \mct} 2|\tilde T| \int_T \frac{q}{\det(DF_T\circ F_T^{-1})} = 0.
\end{align*}
\end{enumerate}
\end{theorem}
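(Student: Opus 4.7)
The plan is to treat the two parts separately, using the correspondence between the affine object on $\tilde{\mct}$ and the curved object on $\mct$ supplied by $\Psi$ and $\Upsilon$.

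For part (1), I would first establish normal continuity across \emph{interior} edges. Fix $\bv = \Psi\tilde\bv \in \bV^h$ and an interior edge $e\in \calE_h^I$ shared by $T_1,T_2\in \mct$. By construction all interior edges are straight, so Theorem~\ref{thm:PsiDef}(2) gives $(\Psi_{T_i}\tilde\bv)\cdot\bn|_e = \tilde\bv\cdot\bn|_e$ from both sides; since $\tilde\bv\in \tilde{\bV}^h\subset \bH^1_0(\tilde\Omega_h)$ its trace is single-valued, so the normal traces of $\bv$ agree. Combined with $\bv|_T\in \bV(T)\subset \bH^1(T)$, this yields $\bv\in \bH(\mathrm{div};\Omega_h)$. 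The more delicate point is the boundary condition $\bv\cdot\bn = 0$ on the possibly curved $\p\Omega_h$, which I would handle as follows: for a boundary edge $e\subset \p T\cap \p\Omega_h$, the defining property of $\Psi_T$ combined with the vanishing of $\tilde\bv$ at all boundary DOFs of $\tilde T$ yields $\bv(a_i) = 0$ for the three nodal points $a_i\in \mathcal{N}_T$ lying on $e$. Writing $\bv = A_T\hat\bv$ and using that $A_T(\hat a_i)$ is invertible, we get $\hat\bv(\hat a_i) = 0$ at the three nodes on the reference edge $\hat e = F_T^{-1}(e)$. Since $\hat\bv|_{\hat e}$ is a single quadratic polynomial (the edge $\hat e$ is shared by exactly one sub-triangle of $\hat T^{ct}$), these three conditions force $\hat\bv \equiv 0$ on $\hat e$, hence $\bv \equiv 0$ on $e$ and in particular $\bv\cdot \bn|_e = 0$.

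For part (2), the ``only if'' direction is a direct unwinding of definitions: if $q = \Upsilon\tilde q$ for some $\tilde q\in \tilde Q^h$, then Definition~\ref{def:OpDef}(2) gives $(q|_T\circ F_T)(\hat x) = \tilde q(F_{\tilde T}(\hat x)) = \hat q(\hat x) \in \hat Q$ for some $\hat q\in \hat Q$. The integral identity then follows from a change of variables on each element: using that $F_{\tilde T}$ is affine with $|\det DF_{\tilde T}| = 2|\tilde T|$, we get
\begin{align*}
\int_{\tilde T}\tilde q\,d\tilde x = 2|\tilde T|\int_{\hat T}\hat q\, d\hat x = 2|\tilde T|\int_T \frac{q(x)}{\det(DF_T(F_T^{-1}(x)))}\,dx,
\end{align*}
where the second equality uses $x = F_T(\hat x)$, $q(x) = \hat q(\hat x)$, and $dx = \det(DF_T)\,d\hat x$. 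Summing over $T\in\mct$ and invoking $\tilde q\in L^2_0(\tilde\Omega_h)$ gives the stated global condition.

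For the ``if'' direction I would reverse this construction: given $q$ with $\hat q := q|_T\circ F_T\in \hat Q$ for each $T$, define $\tilde q|_{\tilde T}(\tilde x) := \hat q(F_{\tilde T}^{-1}(\tilde x))$. Then $\tilde q|_{\tilde T}\in \tilde Q(\tilde T)$ by construction, and the same change-of-variables chain shows that the hypothesized integral identity is exactly $\int_{\tilde\Omega_h}\tilde q = 0$, so $\tilde q\in \tilde Q^h$ and $q = \Upsilon\tilde q\in Q^h$. The main obstacle I anticipate is purely bookkeeping: keeping the two mappings $F_T$ and $F_{\tilde T}$ and their Jacobians straight, which is why the weight $|\det DF_{\tilde T}|/\det DF_T = 2|\tilde T|/\det(DF_T\circ F_T^{-1})$ appears in the final formula.
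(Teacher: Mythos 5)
Your proposal is correct and follows essentially the same route as the paper: normal continuity across interior (straight) edges via Theorem~\ref{thm:PsiDef}(2) together with single-valuedness of $\tilde\bv\cdot\bn$, and the change-of-variables chain through $\hat T$ with $|\det DF_{\tilde T}|=2|\tilde T|$ for the integral identity in part~(2). The only difference is that you spell out the vanishing of $\bv$ on curved boundary edges (three zeros of the quadratic $\hat\bv|_{\hat e}$ after inverting $A_T$ at the nodes), a step the paper simply asserts via its remark on the equivalent characterization of $\bV^h$; this added detail is welcome and the argument is sound.
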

\begin{proof}\
\begin{enumerate}
\item Let $T_1,T_2\in \mct$ such that $\emptyset\neq \p T_1\cap \p T_2=:e$,
and let $\bn$ be a unit normal of $e$. Note that $e$ is a straight edge in $\mct$.
Let $\bv = \Psi(\tilde \bv)$ for some $\tilde \bv\in \tilde \bV^h$, and denote by $\bv_i$ the restriction of $\bv$ to $T_i$.
Likewise, let $\tilde \bv_i$ denote the restriction of $\tilde \bv$ to $\tilde T_i$.
Then by Theorem \ref{thm:PsiDef} and the continuity of $\tilde \bv$, we have
\[
\bv_1\cdot \bn|_e = \tilde \bv_1\cdot \bn|_e = \tilde \bv_2\cdot \bn|_e = \bv_2\cdot \bn|_e.
\]
Thus, the normal component of $\bv$ is single-valued on interior edges.
Because $\bv|_{\p T\cap \p \Omega_h}=0$ for all $T\in \mct$,
we  conclude that $\bv\in \bH_0({\rm div};\Omega_h)$.

\item 
Let $q\in Q^h$.  Then there exists a (unique) $\tilde q\in \tilde Q^h$ such that
$q = \Upsilon \tilde q$, with $q|_T(F_T(\hat x)) = \tilde q|_{\tilde T} (F_{\tilde T}(\hat x))$.
We then find by a change of variables
\begin{align*}
0 &= \int_{\tilde \Omega_h} \tilde q = \sum_{\tilde T\in \tilde \mct} \int_{\tilde T} \tilde q
 = \sum_{\tilde T\in \tilde \mct} 2|\tilde T| \int_{\hat T} \tilde q\circ F_{\tilde T}
= \sum_{ T\in  \mct} 2|\tilde T| \int_{\hat T}  q\circ F_{ T} =  \sum_{ T\in  \mct} 2|\tilde T| \int_{ T}  \frac{q}{\det(DF_T \circ F^{-1}_{ T})}.
\end{align*}
The converse is proved similarly.
\end{enumerate}\hfill
\end{proof}

\subsection{Global inf-sup stability}

In this section, we show  the finite element pair $\bV^h\times Q^h$ 
is inf-sup stable.   This is achieved by using the local stability result given in Theorem \ref{thm:LocalInfSup}
combined with Stenberg's macro element technique.

We define the spaces of piecewise constants with respect to $\tilde \mct$ and $\mct$:
\begin{align*}
\tilde Y^h:&=\{q\in L^2_0(\tilde \Omega_h):\ \tilde q|_T\in \pol_0(\tilde T)\ \forall \tilde T\in \tilde \mct\}\subset \tilde Q^h,\qquad
Y^h:=\{q:\ q = \Upsilon(\tilde q),\ \exists \tilde q\in \tilde Y^h\} \subset Q^h.
\end{align*}
We first show that the pair $\bV^h\times Y^h$ is stable in
the following lemma.

\begin{lemma}\label{lem:infsupPrelim}
There holds
\begin{align*}
\sup_{\bv\in \bV^h\backslash \{0\}} \frac{\int_{\Omega_h} (\nab \cdot \bv)q}{\|\nab \bv\|_{L^2(\Omega_h)}} \ge \gamma_1 \| q\|_{L^2(\Omega_h)}\qquad
\forall q\in Y^h,
\end{align*}
where the gradient of $\bv$ is understood piecewise with respect to $\mct$.  Here, $\gamma_1>0$ is a constant independent of $h$.
\end{lemma}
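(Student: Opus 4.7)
The plan is to transfer the known inf-sup stability on the affine macro-mesh $\tilde \mct$ to the curved macro-mesh $\mct$ via the transfer operators $\Psi$ and $\Upsilon$. As the starting ingredient, I would invoke the inf-sup stability of the pair $\tilde \bV^h \times \tilde Y^h$: this is the standard quadratic Lagrange velocity (on the Clough--Tocher refinement of the affine mesh) paired with the piecewise constants, which is inf-sup stable by the classical $\bP_2$--$P_0$ theory (or equivalently by restricting the affine Scott--Vogelius stability to constants). This gives, for each $\tilde q \in \tilde Y^h$, a function $\tilde \bv \in \tilde \bV^h$ with
\[
\int_{\tilde \Omega_h}(\nab\cdot \tilde \bv)\,\tilde q \ge \gamma_0 \|\tilde q\|_{L^2(\tilde \Omega_h)}^2,
\qquad
\|\nab \tilde \bv\|_{L^2(\tilde \Omega_h)}\le C \|\tilde q\|_{L^2(\tilde \Omega_h)}.
\]

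Given $q \in Y^h$, I would write $q = \Upsilon \tilde q$ for the unique $\tilde q \in \tilde Y^h$, choose $\tilde \bv$ as above, and set $\bv = \Psi \tilde \bv \in \bV^h$. Summing Theorem \ref{thm:PsiDef}(3) over $T\in \mct$ and combining with a Poincar\'e inequality (since $\bv$ vanishes on $\p\Omega_h$) produces $\|\nab \bv\|_{L^2(\Omega_h)}\le C\|\nab \tilde \bv\|_{L^2(\tilde \Omega_h)}\le C\|\tilde q\|_{L^2(\tilde \Omega_h)}$, which is the needed stability bound.

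The heart of the argument is the identity $\int_{\Omega_h}(\nab\cdot \bv)\,q = \int_{\tilde \Omega_h}(\nab\cdot \tilde \bv)\,\tilde q$. Because $\tilde q$ is piecewise constant and $\Upsilon_T$ acts by pullback--pushforward through $F_{\tilde T}$ and $F_T$, the function $q$ is piecewise constant on $\mct$ with $q|_T = \tilde q|_{\tilde T}$ whenever $T = G_h(\tilde T)$. Elementwise integration by parts then gives
\[
\int_{\Omega_h}(\nab\cdot \bv)\,q = \sum_{T\in \mct} q|_T \int_{\p T} \bv\cdot \bn.
\]
Each edge of $\p T$ is of one of two types: a straight interior edge (which, under the assumption that each $\tilde T$ has at most two boundary vertices, coincides as a set with the corresponding edge of $\tilde T$), on which Theorem \ref{thm:PsiDef}(2) gives $\bv\cdot\bn = \tilde \bv\cdot \bn$; or a boundary edge, on which $\bv\cdot \bn = 0$ since $\bv$ vanishes on $\p \Omega_h$ (and the corresponding straight edge of $\tilde T$ contributes zero as well, as $\tilde \bv\cdot \bn = 0$ on $\p\tilde \Omega_h$). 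Reindexing by $\tilde \mct$ and integrating by parts backwards yields $\sum_{\tilde T}\tilde q|_{\tilde T}\int_{\p \tilde T}\tilde \bv\cdot\bn = \int_{\tilde \Omega_h}(\nab\cdot \tilde \bv)\tilde q$, as claimed. Finally, $|T|\sim |\tilde T|$ uniformly in $h$ together with $q|_T=\tilde q|_{\tilde T}$ gives $\|q\|_{L^2(\Omega_h)}\sim \|\tilde q\|_{L^2(\tilde \Omega_h)}$, and chaining the three estimates produces $\gamma_1>0$ independent of $h$.

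The main obstacle is the edge-by-edge matching in the third paragraph: one must carefully verify that every edge of $\p T$ is either a straight edge coinciding with an edge of $\p\tilde T$, or a portion of $\p \Omega_h$ on which the boundary condition kills the flux. This is where the geometric hypothesis that each $\tilde T$ carries at most two boundary vertices (and the fact that $G_h$ is affine along any edge not bordering $\p \tilde \Omega_h$) enters in an essential way; without it, one could have curved interior edges of $\mct$, on which the normal-trace identity from Theorem \ref{thm:PsiDef}(2) does not apply and the transfer argument would break down.
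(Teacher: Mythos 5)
Your proposal is correct and follows essentially the same route as the paper: transfer the affine $\bP_2$--$\pol_0$ stability on $\tilde\calT_h$ to $\calT_h$ via $\Psi$, using the preservation of normal traces on straight interior edges (Theorem \ref{thm:PsiDef}) together with the elementwise divergence theorem and the $H^1$ bound $\|\nab\bv\|_{L^2(\Omega_h)}\le C\|\nab\tilde\bv\|_{L^2(\tilde\Omega_h)}$. The only substantive difference is that you black-box the affine inf-sup stability, whereas the paper constructs $\tilde\bv$ explicitly from a right inverse $\tilde\bw$ of the divergence and an edge-integral-matching Fortin operator, citing \cite{BernardiEtal16} to justify that the constant is uniform over the $h$-dependent domains $\tilde\Omega_h$ --- a point worth making explicit in your version as well.
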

\begin{proof}
 Fix $q\in Y^h$, and let $\tilde q\in \tilde Y^h$ be the piecewise constant function such that $q = \Upsilon \tilde q$.
 Note that, because $q$ and $\tilde q$ are both piecewise constant, there holds $q|_{\Omega_h\cap \tilde \Omega_h} = \tilde q|_{\Omega_h\cap \tilde \Omega_h}$.  In particular, we have
 \[
 \int_T q = \frac{|T|}{|\tilde T|} \int_{\tilde T} \tilde q,\quad \text{and}\quad  \|q\|_{L^2(T)}^2 = \frac{|T|}{|\tilde T|} \|\tilde q\|_{L^2(\tilde T)}^2\qquad \forall \tilde T \in \tilde \calT_h,
 \]
 with $T = G_h(\tilde T)$.	 Thus we have $\|q\|_{L^2(\Omega_h)}\le C\|\tilde q\|_{L^2(\tilde \Omega_h)}$.
 
Let $\tilde \bw\in \bH^1_0(\tilde\Omega_h)$ satisfy $\tilde \nab \cdot \tilde \bw = \tilde q$ and $\|\tilde \nab \tilde \bw\|_{L^2(\tilde\Omega_h)}\le C\|\tilde q\|_{L^2(\tilde \Omega_h)}$.  The results in \cite[Theorem 4.4]{BernardiEtal16} and the properties of $G$ ensure that
$C>0$ is independent of $h$.
From the stability proof of the piecewise 
quadratic-constant pair \cite{BernardiRaugel,Boffi08}, there exists $\tilde \bv\in \tilde \bV^h$ such that
\[
\int_{\tilde e} \tilde \bv = \int_{\tilde e} \tilde \bw,\quad \text{and}\quad \|\tilde \nab \tilde \bv\|_{L^2(\tilde\Omega_h)}\le C \|\tilde \nab \tilde \bw\|_{L^2(\tilde \Omega_h)}.
\]
Let $\bv = \Psi \tilde \bv $ and note that $\|\nab \bv\|_{L^2(\Omega_h)}\le C\|\nab  \tilde \bv\|_{L^2(\tilde \Omega_h)}$
by Theorem \ref{thm:PsiDef} (item (3)).  Furthermore, this theorem shows that, on each $T\in \mct$,
\begin{align*}
\int_T \nab \cdot \bv = \int_{\p T} (\bv\cdot \bn) = \int_{\p \tilde T} (\tilde \bv\cdot \tilde \bn) = \int_{\p \tilde T} (\tilde \bw\cdot \tilde \bn) 
= \int_{\tilde T} \tilde \nab \cdot \tilde \bw = \int_{\tilde T} \tilde q =  \frac{|\tilde T|}{|T|} \int_{T} q,
\end{align*}
and therefore, because $q$ is constant on $T$,
\begin{align*}
\int_T (\nab \cdot \bv) q
 =  \frac{|\tilde T|}{|T|} \int_{T} q^2  = \|\tilde q\|_{L^2(\tilde T)}^2.
\end{align*}
Summing over $T\in \mct$ then gets
\begin{align*}
\int_{\Omega_h} (\nab \cdot \bv) q
 &= \|\tilde q\|_{L^2(\tilde \Omega_h)}^2\ge C\|\tilde q\|_{L^2(\tilde \Omega_h)} \|\tilde \nab \tilde \bw \|_{L^2(\tilde \Omega_h)}
 \ge C\|\tilde q\|_{L^2(\tilde \Omega_h)} \|\tilde \nab \tilde \bv \|_{L^2(\tilde \Omega_h)}\\
 &\ge \|\tilde q\|_{L^2(\tilde \Omega_h)} \|\nab \bv\|_{L^2(\Omega_h)}\ge C\|q\|_{L^2(\Omega_h)} \|\nab \bv\|_{L^2(\Omega_h)}.
\end{align*}
Dividing this expression by $ \|\nab \bv\|_{L^2(\Omega_h)}$ gives us the desired estimate.
\end{proof}

\begin{theorem}\label{thm:GlobalInfsup}
There holds
\begin{align*}
\sup_{\bv\in \bV^h\backslash \{0\}} \frac{\int_{\Omega_h} (\nab \cdot \bv)q}{\|\nab \bv\|_{L^2(\Omega_h)}} \ge C \| q\|_{L^2(\Omega_h)}\qquad
\forall q\in Q^h,
\end{align*}
where the gradient of $\bv$ is understood piecewise with respect to $\mct$.
\end{theorem}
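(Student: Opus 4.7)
The plan is to carry out Stenberg's macro-element argument, combining the inf-sup stability against piecewise constants already established in Lemma \ref{lem:infsupPrelim} with the local inf-sup stability on each curved macro-element from Theorem \ref{thm:LocalInfSup}. The key step is to split $q \in Q^h$ as $q = q_0 + q_1$, where $q_0 \in Y^h$ captures the macro-element averages and $q_1|_T \in Q_0(T)$ on each $T$, and then to build the testing velocity as a weighted sum $\bv = \bv_0 + \delta \bv_1$ of two velocities constructed separately for the two pressure components.

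For the pressure split, given $q \in Q^h$, write $\hat q := q \circ F_T \in \hat Q$ on each $T$, set $c_T := |\hat T|^{-1}\int_{\hat T} \hat q$, and define $\tilde q_0 \in \tilde Y^h$ by $\tilde q_0|_{\tilde T} = c_T$ and $q_0 := \Upsilon \tilde q_0$. The global zero-mean condition from Theorem \ref{thm:GlobalProperties}(2) unwinds (via change of variables) to $\sum_T |\tilde T|\, c_T = 0$, so $\tilde q_0 \in L^2_0(\tilde \Omega_h)$ and hence $q_0 \in Y^h$; moreover $q_0|_T \equiv c_T$ is constant on each $T$ by the definition of $\Upsilon$. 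Then $q_1 := q - q_0 \in Q^h$ satisfies $\int_{\hat T}(\hat q - c_T) = 0$, so $q_1|_T \in Q_0(T)$. The estimates $c_1 h_T^2 \le \det(DF_T)\le c_2 h_T^2$ and $|T| \le C h_T^2$ combined with a change of variables yield the stability bound $\|q_0\|_{L^2(\Omega_h)} + \|q_1\|_{L^2(\Omega_h)} \le C \|q\|_{L^2(\Omega_h)}$ uniformly in $h$.

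Lemma \ref{lem:infsupPrelim} then produces $\bv_0 \in \bV^h$ with $\int_{\Omega_h}(\nab \cdot \bv_0) q_0 \ge \gamma_1 \|q_0\|^2_{L^2(\Omega_h)}$ and $\|\nab \bv_0\|_{L^2(\Omega_h)} \le C \|q_0\|_{L^2(\Omega_h)}$. Applying Theorem \ref{thm:LocalInfSup} on each $T$ to $q_1|_T \in Q_0(T)$ produces $\bv_1|_T \in \bV_0(T)$ with $\nab \cdot \bv_1 = h_T^2 q_1/(\det(DF_T)\circ F_T^{-1})$ on $T$ and $\|\bv_1\|_{H^1(T)} \le C \|q_1\|_{L^2(T)}$. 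Because each local piece vanishes on $\p T$, the alternative characterization of $\bV^h$ in the remark following its definition implies that the patched $\bv_1$ belongs to $\bV^h$. The crucial orthogonality is that $q_0|_T$ is constant and $\bv_1|_T$ vanishes on $\p T$, so
\[
\int_T (\nab \cdot \bv_1) q_0 = c_T \int_{\p T} \bv_1\cdot \bn = 0 \qquad \forall T \in \mct,
\]
while the bound $\det(DF_T) \le c_2 h_T^2$ yields $\int_{\Omega_h}(\nab \cdot \bv_1) q_1 \ge c \|q_1\|^2_{L^2(\Omega_h)}$.

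Setting $\bv := \bv_0 + \delta \bv_1$ and using the orthogonality gives
\[
\int_{\Omega_h}(\nab \cdot \bv) q \ge \gamma_1 \|q_0\|^2_{L^2(\Omega_h)} - C\|q_0\|_{L^2(\Omega_h)}\|q_1\|_{L^2(\Omega_h)} + \delta c \|q_1\|^2_{L^2(\Omega_h)}.
\]
A Young-inequality absorption of the cross term and a sufficiently large choice of $\delta$ yield $\int_{\Omega_h}(\nab \cdot \bv) q \ge C\|q\|^2_{L^2(\Omega_h)}$, while $\|\nab \bv\|_{L^2(\Omega_h)} \le C\|q\|_{L^2(\Omega_h)}$ by the triangle inequality and the previous estimates. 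Dividing produces the claimed inf-sup. The main technical obstacle is twofold: verifying that the locally constructed $\bv_1$ assembles into an admissible member of the global space $\bV^h$ (handled by the remark together with the fact that each local piece vanishes on its element boundary), and tracking the $h_T^2$ factors from $\det(DF_T)$ so that both the constant in Theorem \ref{thm:LocalInfSup} and the resulting lower bound on $\int(\nab\cdot\bv_1)q_1$ remain uniform in $h$.
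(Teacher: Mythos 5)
Your proposal is correct and follows essentially the same route as the paper: the same splitting $q=\bar q+(q-\bar q)$ with the constant part $\bar q_T=c_T$ determined by $\int_{\hat T}(\hat q-c_T)=0$ (the paper's condition $\int_T(q-\bar q_T)/\det(DF_T)=0$ is identical after a change of variables), the same use of Lemma \ref{lem:infsupPrelim} and Theorem \ref{thm:LocalInfSup}, and the same orthogonality $\int_T(\nab\cdot\bv_1)\bar q=0$ from $\bv_1|_{\p T}=0$. The only difference is cosmetic: you assemble a single test function $\bv_0+\delta\bv_1$ and absorb the cross term via Young's inequality, whereas the paper bounds $\|q-\bar q\|_{L^2(\Omega_h)}$ and $\|\bar q\|_{L^2(\Omega_h)}$ separately by the supremum and adds the two estimates; both are standard, equivalent ways to finish Stenberg's argument.
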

\begin{proof}
Let $q\in Q^h$.  For each $T\in \calT_h$, we define $\bar{q}_T\in \pol_0(T)$ such that
\begin{align*}
\int_T \frac{(q-\bar q_T)}{\det(DF_T)} = 0,
\end{align*}
and set $\bar q$ such that $\bar q|_T = \bar q_T$ for all $T\in \mct$.
Then $(q-\bar q)|_T\in {Q}_{0}(T)$ for all $T\in \calT_h$, and $\bar q\in Y^h$.
Consequently, by Theorem \ref{thm:LocalInfSup}, for each $T\in \calT_h$, there exists $\bv_{1,T}\in {\bV}_0(T)$
such that 
\[
\nab \cdot \bv_{1,T} = \frac{h_T^2 (q-\bar q)}{\det(DF_T)},\qquad \|\nab \bv_{1,T}\|\le C \|q-\bar q\|_{L^2(T)}.
\]
Set $\bv_1$ such that $\bv_1|_T= \bv_{1,T}$ for all $T\in \calT_h$.
Then $\bv_1\in \bV^h$ because $\bv_{1,T}|_{\p T} = 0$. 
We also have $\|\nab \bv_1\|_{L^2(\Omega_h)}\le C\|q-\bar q\|_{L^2(\Omega_h)}$, and
\begin{align*}
\int_{\Omega_h} (\nab \cdot \bv_1) (q-\bar q)
& = \sum_{T\in \mct} \int_T (\nab \cdot \bv_1) (q-\bar q)\\
& = \sum_{T\in \mct} \int_T  \frac{h_T^2 |q-\bar q|^2}{\det(DF_T)} \ge c \sum_{T\in \mct} \int_T |q-\bar q|^2\\
& =c  \|q-\bar q\|_{L^2(\Omega_h)}^2\\
&\ge c \|q-\bar q\|_{L^2(\Omega_h)}\|\nab \bv_1\|_{L^2(\Omega_h)}.
\end{align*}
Next, recall  $\bv_1|_{\p T} = \bv_{1,T}|_{\p T} = 0$, and $\bar q$ is constant on each $T$.
Therefore by the divergence theorem,
\begin{align*}
\int_{\Omega_h} (\nab \cdot \bv_1) \bar q = \sum_{T\in \calT_h} \int_T (\nab \cdot \bv_1) \bar q = \sum_{T\in \calT_h} \int_{\p T} (\bv_1\cdot \bn) \bar q = 0.
\end{align*}
Thus, we conclude the existence of a constant $\gamma_0$ independent of $h$ such that
\begin{align*}
\gamma_0 \|q-\bar q\|_{L^2(\Omega_h)} \le \sup_{\bv\in \bV^h\backslash \{0\}}\frac{\int_{\Omega_h} (\nab \cdot \bv) q}{\|\nab \bv\|_{L^2(\Omega_h)}}.
\end{align*}

Next, we use the stability of the $\bV^h\times Y^h$ pair given in Lemma \ref{lem:infsupPrelim}:
\begin{align*}
\gamma_1 \|\bar q\|_{L^2(\Omega_h)}
& \le \sup_{\bv\in \bV^h\backslash \{0\}}\frac{\int_{\Omega_h} (\nab \cdot \bv) \bar q}{\|\nab \bv\|_{L^2(\Omega_h)}}\\
& \le \sup_{\bv\in \bV^h}\frac{\int_{\Omega_h} (\nab \cdot \bv)  q}{\|\nab \bv\|_{L^2(\Omega_h)}} +\|q-\bar q\|_{L^2(\Omega_h)}
 \le (1+\gamma_0^{-1})\sup_{\bv\in \bV^h\backslash \{0\}}\frac{\int_{\Omega_h} (\nab \cdot \bv)  q}{\|\nab \bv\|_{L^2(\Omega_h)}}.
\end{align*}
Therefore,
\begin{align*}
\|q\|_{L^2(\Omega_h)}
&\le \|q-\bar q\|_{L^2(\Omega_h)}+\|\bar q\|_{L^2(\Omega_h)}
\le (\gamma_0^{-1}+\gamma_1^{-1}(1+\gamma_0^{-1})) \sup_{\bv\in \bV^h\backslash \{0\}}\frac{\int_{\Omega_h} (\nab \cdot \bv)  q}{\|\nab \bv\|_{L^2(\Omega_h)}}.
\end{align*}
This is the desired inf-sup condition.
\end{proof}

\subsection{$\bV^h$ as an approximate $\bH^1_0(\Omega_h)$ function space}
Recall from Theorem \ref{thm:GlobalProperties} 
that the discrete velocity space is $\bH_0({\rm div};\Omega_h)$-conforming.
However, in general there holds $\bV^h\not\subset \bH^1(\Omega_h)$
because $\bv|_e$ is not a quadratic polynomial on all edges $e$ in $\mct$.
More precisely, $\bv|_e$ is not a quadratic polynomial if $e\subset \p T$,
and $T$ has a curved edge (otherwise $\bv|_e$ is quadratic and is continuous across the edge). 
Nonetheless, the definition of $\bV^h$ shows that functions in $\bV^h$ are single-valued
at three points on each internal edge.  We use this property in the next two lemmas
to show that functions in $\bV^h$ are ``weakly continuous.''

\begin{lemma}\label{lem:EhApprox}
There exists an operator $\bE_h:\bV^h\to \bH^1_0(\Omega_h)$ such that for all $\bv\in \bV^h$,
\begin{align}\label{eqn:EhApprox}
\|\bv-\bE_h \bv\|_{L^2(T)}+h_T \|\nab (\bv-\bE_h \bv)\|_{L^2(T)}\le C h_T^2 \|\nab \bv\|_{L^2(T)}\qquad \forall T\in \mct.
\end{align}
\end{lemma}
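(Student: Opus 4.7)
The plan is to take $\bE_h\bv$ to be the nodal interpolant of $\bv$ into the $\bH^1_0$-conforming isoparametric quadratic Lagrange space on $\mct^{ct}$ defined via composition with the macro mapping,
\[
\{\bu\in\bH^1_0(\Omega_h):\ \bu|_T = \hat\bu\circ F_T^{-1},\ \exists\hat\bu\in\hat\bV\ \forall T\in\mct\}.
\]
Setting $(\bE_h\bv)(a)=\bv(a)$ at every Lagrange node $a\in\mathcal{N}_T$ gives a well-defined element of this space, because functions in $\bV^h$ are single-valued at these nodes and vanish at boundary nodes. Writing $\bv|_T = A_T\hat\bv\circ F_T^{-1}$ and $(\bE_h\bv)|_T = \hat\bv^*\circ F_T^{-1}$ with $\hat\bv,\hat\bv^*\in\hat\bV$, the nodal conditions read $\hat\bv^*(\hat a) = A_T(\hat a)\hat\bv(\hat a)$ for all $\hat a\in\mathcal{N}_{\hat T}$.

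The key step is an algebraic identity. I would introduce the auxiliary function $\hat\bphi := A_T^{-1}\hat\bv^*$; then $\hat\bphi(\hat a) = \hat\bv(\hat a)$ at every node, and because $\hat\bv$ is piecewise quadratic on $\hat T^{ct}$, the Lagrange interpolant $I^L$ on $\hat T^{ct}$ satisfies $I^L\hat\bphi = \hat\bv$. Consequently,
\[
\bv - \bE_h\bv = -A_T(\hat\bphi - I^L\hat\bphi)\circ F_T^{-1}.
\]
The payoff is that the cofactor-type matrix $A_T^{-1}$ is a \emph{linear} function of $\hat x$ (since $F_T$ is quadratic), so $\hat\bphi|_{\hat K}$ is a cubic polynomial on each $\hat K \in \hat T^{ct}$ with $D^3\hat\bphi = 3(DA_T^{-1})D^2\hat\bv^*$. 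Combined with $|A_T^{-1}|_{W^{1,\infty}(\hat T)}\le Ch_T^2$ from Lemma \ref{lem:ATLem}, this yields $|\hat\bphi|_{H^3(\hat K)}\le Ch_T^2|\hat\bv^*|_{H^2(\hat K)}$, so that classical quadratic Lagrange interpolation error estimates give $\|\hat\bphi - I^L\hat\bphi\|_{L^2(\hat K)} + |\hat\bphi - I^L\hat\bphi|_{H^1(\hat K)} \le Ch_T^2|\hat\bv^*|_{H^2(\hat K)}$. Changing variables with the bounds on $A_T$ and $F_T$ then produces
\[
\|\bv - \bE_h\bv\|_{L^2(T)} + h_T\|\nab(\bv - \bE_h\bv)\|_{L^2(T)} \le Ch_T^2|\hat\bv^*|_{H^2(\hat T)}.
\]

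To close the estimate I would control $|\hat\bv^*|_{H^2(\hat T)}$ by $\|\nab\bv\|_{L^2(T)}$ through three ingredients: finite-dimensional equivalence of the seminorms $|\cdot|_{H^2}$ and $|\cdot|_{H^1}$ on $\hat\bV$; affine scaling via $F_{\tilde T}$ giving $|\hat\bv^*|_{H^1(\hat T)}\le C|\bv^*|_{H^1(\tilde T)}$, where $\bv^* := \Psi_T^{-1}\bv\in\tilde\bV(\tilde T)$; and the identity $(\bE_h\bv)|_T = \bv^*\circ G_h^{-1}$ (a consequence of $F_T = G_h\circ F_{\tilde T}$) combined with the uniform bounds on $DG_h$ and $DG_h^{-1}$, which give $|\bv^*|_{H^1(\tilde T)}\le C\|\nab\bE_h\bv\|_{L^2(T)}\le C(\|\nab\bv\|_{L^2(T)} + \|\nab(\bv - \bE_h\bv)\|_{L^2(T)})$. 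Substituting into the $H^1$-part of the previous display yields a recursive bound of the form $\|\nab(\bv - \bE_h\bv)\|_{L^2(T)} \le Ch_T\|\nab\bv\|_{L^2(T)} + Ch_T\|\nab(\bv - \bE_h\bv)\|_{L^2(T)}$, which may be absorbed for sufficiently small $h$ to give the full inequality~(\ref{eqn:EhApprox}).

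The main obstacle is this last absorption step: since $|\hat\bv^*|_{H^2(\hat T)}$ is most naturally linked to $\|\nab\bE_h\bv\|_{L^2(T)}$ rather than to $\|\nab\bv\|_{L^2(T)}$ directly, one has to close the estimate with a Young-type argument that leverages the smallness of $h$. A secondary technical point is the recognition of the algebraic identity in terms of $\hat\bphi=A_T^{-1}\hat\bv^*$: without this rewriting, which shifts the differentiability burden from the rational matrix $A_T$ onto the linear matrix $A_T^{-1}$, Lemma~\ref{lem:ATLem} cannot be leveraged to extract the necessary factor $h_T^2$.
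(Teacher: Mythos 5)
Your proposal is correct, and the operator you construct is exactly the paper's $\bE_h$ (the composition-based isoparametric Lagrange function with the same nodal values), but the way you extract the factor $h_T^2$ is genuinely different. The paper views $\hat\bv^*$ as the piecewise quadratic Lagrange interpolant of the \emph{rational} function $A_T\hat\bv$ on $\hat T^{ct}$, applies Bramble--Hilbert to $|A_T\hat\bv|_{H^3(\hat K)}$ (using $D^3\hat\bv=0$ and $|A_T|_{W^{m,\infty}}\le Ch_T^{m-1}$), gains one power of $h_T$ from $\|A_T^{-1}\|_{L^\infty(\hat T)}\le Ch_T$, and gains the second power from the Poincar\'e inequality on $T$, which requires that $\bv$ vanish on the curved part of $\p T$. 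You instead dualize the nodal identity: $\hat\bv=I^L(A_T^{-1}\hat\bv^*)$, where $A_T^{-1}={\rm adj}(DF_T)$ is \emph{globally linear} in $\hat x$, so that $\hat\bphi=A_T^{-1}\hat\bv^*$ is continuous and piecewise cubic with $|\hat\bphi|_{H^3(\hat K)}\le C|A_T^{-1}|_{W^{1,\infty}(\hat T)}|\hat\bv^*|_{H^2(\hat K)}\le Ch_T^2|\hat\bv^*|_{H^2(\hat K)}$; both powers of $h_T$ come from Lemma \ref{lem:ATLem} at once, and the right-hand side is a seminorm, so no Poincar\'e inequality or boundary condition is needed. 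The price is the closing step: since $\hat\bv^*$ is tied to $\bE_h\bv$ rather than to $\bv$, you must kick back $\|\nab(\bv-\bE_h\bv)\|_{L^2(T)}$ and absorb it using $Ch_T\le 1/2$, which is legitimate under the paper's standing assumption that the mesh size is sufficiently small (with $C$ uniform, coming only from \eqref{eqn:FProperties} and shape regularity), whereas the paper's argument is unconditional and direct. Two points you should spell out when writing this up: the seminorm equivalence $|\cdot|_{H^2(\hat T)}\le C|\cdot|_{H^1(\hat T)}$ on $\hat\bV$ holds because the kernel of the $H^1$-seminorm (constants) is contained in that of the $H^2$-seminorm, so the $H^2$-seminorm descends to a continuous seminorm on the finite-dimensional quotient; and the displayed formula $D^3\hat\bphi=3(DA_T^{-1})D^2\hat\bv^*$ should be stated as a Leibniz bound rather than an identity, though this does not affect the estimate.
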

\begin{proof}
For given $\bv\in \bV^h$, 
there exists $\tilde \bv\in \tilde \bV^h$ such that
$\bv = \Psi \tilde \bv$.  In particular, $\tilde \bv$ is uniquely determined by
\begin{alignat*}{2}
&\bv|_T(a) = \tilde \bv|_{\tilde T}(\tilde a)\qquad \forall a\in \mathcal{N}_T,\quad \forall T\in \mct,
\end{alignat*}
with $T = G_h(\tilde T)$.
We define the function $\bE_h \bv$ via
\begin{align*}
\bE_h\bv|_T=
 (\tilde \bv \circ  F_{\tilde T} \circ F_{T}^{-1})|_T\qquad \forall T\in \mct.
\end{align*}
That is, $\bE_h \bv$ is the function in the standard isoparametric quadratic Lagrange finite element
space associated with $\tilde \bv$.  We then have $\bE_h \bv\in \bH^1_0(\Omega_h)$. 
Furthermore, since $F_T^{-1}$ is affine on straight edges, we see 
$\tilde \bv = \bE_h \bv$ on straight edges.  In particular, we conclude 
\begin{alignat*}{2}
\bE_h\bv |_T(a) = \bv|_T(a)\qquad \forall a\in \mathcal{N}_T,\quad \forall T\in \mct. 
\end{alignat*}

We now estimate the difference $\bv- \bE_h \bv$.
On affine (non-curved) triangles, we easily see
 $\bv = \bE_h \bv$ because both functions are piecewise quadratic 
polynomials.  Therefore the estimate trivially holds in this case.

Next, let $T\in \mct$ with curved boundary.
We then have $\bv|_{\p T\cap \p\Omega_h}=0$.
Write
$\bv|_T(x) = A_T(\hat x) \hat \bv(\hat x)$ with $\hat \bv\in \hat \bV$
and $A_T = DF_T/\det(DF_T)$.  We also set $\hat \bw\in \hat \bV$
such that $\hat \bw(\hat x) = \bE_h\bv|_T (x)$. 
We then have
\begin{align*}
A_T(\hat a) \hat \bv(\hat a) = \hat \bw(\hat a)\qquad \forall \hat a\in \mathcal{N}_{\hat T}.
\end{align*}
Thus, $\hat \bw$ is the piecewise quadratic Lagrange interpolant of $A_T \hat \bv$ on $\hat T^{ct}$.
It then follows from the Bramble--Hilbert lemma that
\[
\|A_T\hat \bv - \hat \bw\|_{H^m(\hat K)}\le C |A_T\hat \bv|_{H^3(\hat K)}\qquad \forall \hat K\in \hat T^{ct},\quad m=0,1.
\]
Expanding the right--hand side,  using Lemma \ref{lem:ATLem}, and the fact that $\hat \bv|_{\hat K}$ is a quadratic polynomial shows
\begin{align*}
 |A_T\hat \bv|_{H^3(\hat K)}
 &\le C\big(|A|_{W^{3,\infty}(\hat K)} \|\hat \bv\|_{L^2(\hat K)}+|A|_{W^{2,\infty}(\hat K)} |\hat \bv|_{H^1(\hat K)}+|A|_{W^{1,\infty}(\hat K)} |\hat \bv|_{H^2(\hat K)}\big)\\
 &\le C \|\hat \bv\|_{H^2(\hat K)}\le C \|\hat \bv\|_{L^2(\hat K)},
 \end{align*}
 where we used the equivalence of norms in a finite dimensional setting in the last inequality.
 Using the estimate $\|A_T^{-1}\|_{L^\infty(\hat T)}\le Ch_T$, we conclude
 \begin{align*}
\|A_T \hat \bv-\hat \bw\|_{H^m(\hat T)}\le C h_T \|A_T \hat \bv\|_{L^2(\hat T)},\qquad m=0,1.
 \end{align*}
 We then use Lemma \ref{lem:scaling} and the Poincare inequality to get ($m=0,1$)
 \begin{align*}
 \|\bv-\bE_h \bv\|_{H^m(T)}
 &\le C h_T^{1-m}\|A_T \hat \bv-\hat \bw\|_{H^m(\hat T)}\\
 &\le C h_T^{2-m} \|A_T \hat \bv\|_{L^2(\hat T)} \\
& \le C h_T^{1-m} \|\bv\|_{L^2(T)}\le C h_T^{2-m}\|\nab \bv\|_{L^2(T)}.
 \end{align*}\hfill
\end{proof}

Recall, $\mathcal{E}_h^I$ is the set of internal edges of $\mct$.  
For $e = \mathcal{E}_h^I$, write $e = \p T_+\cap \p T_-$ for some $T_{\pm}\in \mct$.
Let $\bn_{\pm}$ denote the outward unit normal of $\p T_{\pm}$ restricted to $e$,
and for a piecewise smooth function $\bv$, let $\bv_{\pm}$ denote the restriction of $\bv$ to $T_{\pm}$.  
We then define
the jump operator
\[
[\bv]|_e = \bv_+ \otimes \bn_+ + \bv_- \otimes \bn_-,
\]
where $({\bm a}\otimes {\bm b})_{i,j} = a_i b_j$.

\begin{lemma}\label{lem:JumpEst}
Let $e\in \mathcal{E}_h^I$ with $e = \p T_+\cap \p T_-$ for some $T_{\pm}\in \mct$.
Then there holds for all $\bv\in \bV^h$,
\begin{align}\label{eqn:JumpEst}
\Big|\int_e [\bv]\Big|\le C h_T^3(\|\nab \bv\|_{L^2(T_+)}+\|\nab \bv\|_{L^2(T_-)}),
\end{align}
where $h_T = \max\{h_{T_+},h_{T_-}\}$.
\end{lemma}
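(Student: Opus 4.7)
The plan is to view $\int_e [\bv]$ as a Simpson's-rule quadrature error and to exploit that Simpson's rule is exact for cubic polynomials. First, since $\bE_h \bv \in \bH^1_0(\Omega_h)$ is continuous across $e$, one has $[\bE_h \bv]|_e = 0$, and hence $\int_e [\bv] = \int_e [\bv - \bE_h \bv]$. Fix $T \in \{T_+, T_-\}$, and let $a_1, a_2, a_3$ be the two endpoints and the midpoint of $e$; these belong to $\mathcal{N}_T$, so $\bv$ and $\bE_h \bv$ agree there. Moreover, $\bE_h \bv|_e$ is a quadratic polynomial (because $F_T^{-1}$ and $F_{\tilde T}$ restrict to affine maps on the straight edge $e$), so Simpson's rule $S_e$ integrates $\bE_h \bv$ exactly on $e$. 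Combining these two facts gives the identity
\[
\int_e (\bv-\bE_h\bv)|_T = \int_e \bv|_T - S_e(\bv|_T) =: E_e(\bv|_T),
\]
i.e., the contribution from $T$ is precisely the Simpson quadrature error for $\bv|_T$ on $e$. If $F_T$ is affine then $\bV(T) = \tilde\bV(\tilde T)$ and $\bv|_T = \bE_h\bv|_T$ by Lemma \ref{lem:bvnQuad}, so $E_e(\bv|_T)=0$; only curved neighbors of $e$ contribute.

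For a curved $T$, I would pull back to the reference edge. Since $e$ is straight, $F_T|_{\hat e}$ is affine with Jacobian $J = |e|/|\hat e| \le C h_T$, and writing $\bv|_T(x) = A_T(\hat x)\hat \bv(\hat x)$ yields $E_e(\bv|_T) = J\cdot E_{\hat e}(A_T \hat \bv)$. Because Simpson's rule is exact on cubic polynomials, the error functional $E_{\hat e}$ annihilates $\pol_3(\hat e)$, and the Bramble--Hilbert lemma on the fixed reference edge yields $|E_{\hat e}(A_T\hat\bv)| \le C |A_T\hat\bv|_{H^4(\hat e)}$. To bound this seminorm, I would expand $(A_T\hat\bv)^{(4)}$ by the Leibniz rule. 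Since $\hat e$ lies in a single sub-element of $\hat T^{ct}$, $\hat\bv|_{\hat e}$ is a quadratic polynomial, whose derivatives of order $\ge 3$ vanish, reducing the expansion to $A_T^{(4)}\hat\bv + 4 A_T^{(3)}\hat\bv' + 6 A_T^{(2)}\hat\bv''$. Invoking Lemma \ref{lem:ATLem} ($|A_T|_{W^{m,\infty}(\hat T)} \le Ch_T^{m-1}$) together with norm equivalence in the finite-dimensional space $\hat\bV$ produces $|A_T\hat\bv|_{H^4(\hat e)} \le Ch_T\|\hat\bv\|_{L^2(\hat T)} \le Ch_T\|\bv\|_{L^2(T)}$ after the Piola change of variables.

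Finally, since a curved $T$ has a portion of $\p T$ lying on $\p\Omega_h$ where $\bv$ vanishes, the Poincar\'e inequality gives $\|\bv\|_{L^2(T)}\le Ch_T\|\nab\bv\|_{L^2(T)}$. Combining with the Jacobian factor $J \le Ch_T$ gives $|E_e(\bv|_T)| \le Ch_T^3\|\nab\bv\|_{L^2(T)}$, and summing over $T=T_+,T_-$ yields \eqref{eqn:JumpEst}. The main obstacle is the tight tracking of $h_T$-powers in the $H^4$-seminorm estimate, where three independent mechanisms --- the decay of $A_T$'s derivatives, the vanishing of $\hat\bv^{(k)}$ for $k\ge 3$, and crucially the cubic (rather than merely quadratic) exactness of Simpson's rule --- conspire to produce a sharp factor of $h_T$. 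Using only quadratic exactness would lose one power and yield the insufficient bound $h_T^2$; thus the proof fundamentally relies on both the specific placement of Lagrange nodes on $e$ and on the smoothness of $A_T$ produced by the quadratic parameterization $F_T$.
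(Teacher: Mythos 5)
Your proof is correct, and it rests on the same core mechanism as the paper's: the jump vanishes at the two endpoints and the midpoint of $e$, so $\int_e [\bv]$ is a Simpson's-rule quadrature error, which is then estimated by pulling back through $A_T$, combining Lemma \ref{lem:ATLem} with norm equivalence on $\hat\bV$ to extract the decisive factor of $h_T$, and finishing with a scaled Poincar\'e inequality. The decomposition, however, is genuinely different and buys you a real simplification. You split $\int_e[\bv]$ into the two one-sided Simpson error functionals $E_e(\bv|_{T_\pm})$ and observe that an affine neighbor contributes exactly zero (its trace on $e$ is a true quadratic), so every nonzero contribution comes from an element with a curved boundary edge, where $\bv$ vanishes on a portion of $\p T$ of relative size $\sim h_T$ and the bound $\|\bv\|_{L^2(T)}\le Ch_T\|\nab \bv\|_{L^2(T)}$ is immediate. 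The paper instead applies the Simpson remainder to $[\bv]$ as a whole, which forces it to control $\|\bv\|_{L^2(T_-)}$ on the neighbor $T_-$ that need not have a curved edge; this costs an extra argument (the last third of the paper's proof) in which $\bE_h\bv$ is introduced, its vanishing at the boundary vertex of $T_-$ is used to make $\hat\bw\mapsto\|\hat\nab\hat\bw\|_{L^2(\hat T)}$ a norm, and a discrete Poincar\'e bound is recovered on $T_-$. Your decomposition sidesteps that step entirely. The remaining difference --- Bramble--Hilbert with the $H^4(\hat e)$ seminorm on the reference edge versus the explicit $W^{4,\infty}$ Simpson remainder on the physical edge scaled through the two-dimensional sub-elements $K_\pm$ --- is purely technical: in both cases the dominant term is $|A_T|_{W^{2,\infty}(\hat T)}\,|\hat\bv''|\le Ch_T\|\hat\bv\|_{L^2(\hat T)}$, the Leibniz terms involving $\hat\bv^{(3)}$ and $\hat\bv^{(4)}$ drop because $\hat\bv|_{\hat e}$ is quadratic on a single sub-element of $\hat T^{ct}$, and both routes land on the same $h_T^3$ rate.
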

\begin{proof}
Let $a_1,a_2$ be the endpoints of $e$, and let $a_3$ be the midpoint of $e$.
If both $T_+$ and $T_-$ are affine,
then $\bv|_{T_+\cup T_-} \in \bH^1(T_+\cup T_-)$. This implies $[\bv]|_e=0$,
and the estimate trivially follows.

Next suppose that at least one of $T_{\pm}$ has a curved edge,
which implies that one of the endpoints $a_1,a_2$ lie on $\p\Omega_h$.
Without loss of generality,
we assume that $T_+$ has a curved edge.

By construction of the space $\bV^h$, in particular, the definition of $\Psi$, we have $[\bv]|_e(a_i)=0$, $i=1,2,3$.
It then follows from the error of Simpson's rule that
\begin{align}\label{eqn:Simpson}
\Big|\int_e [\bv]\Big|\le C |e|^5 \big|[\bv]\big|_{W^{4,\infty}(e)}\le C h_T^5(|\bv|_{W^{4,\infty}(K_+)} + |\bv|_{W^{4,\infty}(K_-)}),
\end{align}
where $K_{\pm}\in T^{ct}_{\pm}$ satisfy $\p K_+\cap \p K_- = e$.

Write $\bv|_{K_{\pm}}(x) = (A_{T_{\pm}} \hat \bv_{\pm})|_{\hat K_{\pm}}(\hat x)$ with $\hat \bv_{\pm}\in \hat \bV$,
and $\hat K_{\pm} = F_{T_{\pm}}^{-1}(K_{\pm})$.
We  apply Lemmas \ref{lem:scaling} and \ref{lem:ATLem} to the right-hand side of \eqref{eqn:Simpson}
and use the fact that $\bv_{\pm}$ is a quadratic polynomial:
\begin{align*}
|\bv|_{W^{4,\infty}(K_{\pm})}
&\le Ch_{T_{\pm}}^{-4} \sum_{r=0}^4 h_{T_{\pm}}^{2(4-r)} |A_{T_{\pm}} \hat \bv_{\pm}|_{W^{r,\infty}(\hat K_{\pm})}\\
&\le C h_{T_{\pm}}^{4} \sum_{r=0}^4 h_{T_{\pm}}^{-2r} \sum_{j=0}^r  |A_{T_{\pm}}|_{W^{r-j,\infty}(\hat T_{\pm})} |\hat \bv_{\pm}|_{W^{j,\infty}(\hat K_{\pm})}\\
&\le C h_{T_{\pm}}^{4} \sum_{r=0}^4 h_{T_{\pm}}^{-2r} \sum_{j=0}^2  h_{T_{\pm}}^{r-j-1} |\hat \bv_{\pm}|_{W^{j,\infty}(\hat K_{\pm})}\\
&\le C  \sum_{j=0}^2  h_{T_{\pm}}^{-j-1} |\hat \bv_{\pm}|_{W^{j,\infty}(\hat K_{\pm})}\le C h_{T_{\pm}}^{-3} \|\hat \bv_{\pm}\|_{L^2(\hat K_{\pm})},
\end{align*}
where we used equivalence of norms in the last inequality.
Using the estimate $\|A_{T_{\pm}}^{-1}\|_{L^\infty(\hat T)}\le Ch_{T_{\pm}}$ and Lemma \ref{lem:scaling} we get
\begin{align*}
|\bv|_{W^{4,\infty}(K_{\pm})}\le C h_{T_{\pm}}^{-2} \|A_{T_{\pm}} \hat \bv_{\pm}\|_{L^2(\hat T)}\le C h_{T_{\pm}}^{-3} \|\bv\|_{L^2(T_{\pm})}.
\end{align*}
Combining this estimate with \eqref{eqn:Simpson} and applying the Poincare inequality (on $T_+$) yields
\begin{align}\label{eqn:Simpson2}
\Big|\int_e [\bv]\Big|\le C  h_T^2\big( \|\bv\|_{L^2(T_-)} + h_T \|\nab \bv\|_{L^2(T_+)}\big).
\end{align}

Next we show $\|\bv\|_{L^2(T_-)}\le C h_T \|\nab \bv\|_{L^2(T_-)}$.
To this end, we
set $\bw = \bE_h\bv$, where $\bE_h \bv$ is given in Lemma \ref{lem:EhApprox}.
We then write
\begin{align*}
 \|\bv\|_{L^2(T_-)} \le \|\bv-\bw\|_{L^2(T_-)} + \|\bw\|_{L^2(T_-)} \le \|\bw\|_{L^2(T_-)} + C h_{T_{-}}^2 \|\nab \bv\|_{L^2(T_-)}.
 \end{align*}
 
 Let $\hat \bw\in \hat \bV$ such that $\hat \bw(\hat x) = \bw(x)$ with $x = F_{T_-}(\hat x)$.
 Noting that $\bw$ vanishes on $\p\Omega_h$, in particular $\bw$ vanishes
 on at least one vertex of $T_-$, we conclude that
 \[
 \hat \bw \to \|\hat \nab \bw\|_{L^2(\hat T)}
 \]
 is a norm.  Therefore by Lemma \ref{lem:scaling} and equivalence of norms,
 \begin{align*}
 \|\bw\|_{L^2(T_-)} \le C h_T \|\hat \bw\|_{L^2(\hat T)} \le C h_T \|\hat \nab \hat \bw\|_{L^2(\hat T)} \le C h_T \|\nab \bw\|_{L^2(T_-)}.
 \end{align*}
 Hence, we have
 \begin{align*}
  \|\bv\|_{L^2(T_-)} 
  %
 \le C \Big( h_T \|\nab \bw\|_{L^2(T_-)} +  h_T^2 \|\nab \bv\|_{L^2(T_-)}\Big)
  %
 %
 \le C h_T \|\nab \bv\|_{L^2(T_-)}.
  \end{align*}
  Combining this estimate with \eqref{eqn:Simpson2}, we obtain
  the desired estimate \eqref{eqn:JumpEst}.  This concludes the proof.
\end{proof}

\section{Finite Element Method and Convergence Analysis}\label{sec:FEM}
For a given function ${\bm f}$, we let $(\bu,p)\in \bH^1_0(\Omega)\times L^2_0(\Omega)$ be the solution
to the Stokes problem
\begin{alignat*}{2}
-\nu \Delta \bu + \nab p & = {\bm f},\qquad \nab \cdot \bu = 0\qquad \text{in }\Omega,
\end{alignat*}
where $\nu>0$ is the viscosity.
We assume that $\p\Omega$ and ${\bm f}$ are sufficiently smooth
such that $(\bu,p)\in \bH^3(\Omega)\times H^2(\Omega)$,
and can be extended to $\bbR^2$ in a way such that
$(\bu,p)\in \bH^3(\bbR^2)\times H^2(\bbR^2)$ with $\nab \cdot \bu=0$ and (cf.~\cite{KatoEtAl2000})
\[
\|\bu\|_{H^3(\bbR^2)}\le C \|\bu\|_{H^3(\Omega)},\qquad \|p\|_{H^2(\bbR^2)}\le C \|p\|_{H^2(\Omega)}.
\]
We then extend ${\bm f}$ by
\[
{\bm f} = -\nu \Delta \bu+ \nab p,
\]
so that ${\bm f}\in \bH^1(\bbR^2)$.  

We denote by ${\bm f}_h\in \bL^2(\Omega_h)$ a computable approximation of ${\bm f}|_{\Omega}$.
For example, ${\bm f}_h$ could be the (global) quadratic Lagrange nodal interpolant of ${\bm f}$.

The finite element method seeks $(\bu_h,p_h)\in \bV^h\times Q^h$
such that
\begin{subequations}
\label{eqn:FEM}
\begin{alignat}{2}
\label{eqn:FEM1}
\int_{\Omega_h} \nu \nab \bu_h:\nab \bv - \int_{\Omega_h} (\nab \cdot \bv) p_h & = \int_{\Omega_h} {\bm f}_h\cdot \bv \qquad &&\forall \bv\in\bV^h,\\
\label{eqn:FEM2}
\int_{\Omega_h} (\nab \cdot \bu_h)q & =0\qquad &&\forall q\in Q^h,
\end{alignat}
\end{subequations}
where the gradient is understood piecewise with respect to the triangulation.

By the inf-sup condition established in Lemma \ref{thm:GlobalInfsup} and standard
theory of mixed finite element methods, problem \eqref{eqn:FEM} is well-posed.

\begin{theorem}
There exists a unique solution $(\bu_h,p_h)\in \bV^h\times Q^h$ satisfying
\eqref{eqn:FEM}.
\end{theorem}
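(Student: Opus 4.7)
The plan is to apply the standard Brezzi/Babuška saddle-point theory for mixed finite element methods. Since the system \eqref{eqn:FEM} is linear and finite dimensional, existence is equivalent to uniqueness, so it suffices to verify: (i) continuity of the two bilinear forms with respect to an appropriate pair of norms; (ii) coercivity of $a(\bu,\bv) := \int_{\Omega_h}\nu \nabla \bu:\nabla \bv$ on the discrete divergence-free subspace (in fact, on all of $\bV^h$ will be established); and (iii) the discrete inf-sup condition for the divergence form. Condition (iii) is already supplied by Theorem \ref{thm:GlobalInfsup}, so essentially all that must be checked is that the broken energy seminorm $|\bv|_h := \|\nabla_h \bv\|_{L^2(\Omega_h)}$ (with $\nabla_h$ the piecewise gradient over $\calT_h$) actually defines a norm on $\bV^h$, so that $a(\cdot,\cdot)$ becomes coercive. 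Continuity in this broken norm is immediate by Cauchy-Schwarz.

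The main obstacle is that $\bV^h$ is not $\bH^1_0$-conforming but only $\bH_0(\mathrm{div})$-conforming with weak continuity across curved interior edges (cf.~Theorem \ref{thm:GlobalProperties} and Lemmas \ref{lem:EhApprox}--\ref{lem:JumpEst}). Consequently, the usual Poincaré inequality does not apply directly, and $|\cdot|_h$ being a norm is not obvious. To establish this, I would use the smoothing operator $\bE_h:\bV^h\to \bH^1_0(\Omega_h)$ from Lemma \ref{lem:EhApprox}. For any $\bv\in \bV^h$, write $\bv = (\bv-\bE_h\bv) + \bE_h\bv$. The local estimate \eqref{eqn:EhApprox}, summed over $T\in \calT_h$, gives
\begin{align*}
\|\bv-\bE_h\bv\|_{L^2(\Omega_h)} \le C h\, |\bv|_h,\qquad \|\nabla \bE_h \bv\|_{L^2(\Omega_h)}\le C |\bv|_h.
\end{align*}
Applying the continuous Poincaré inequality to $\bE_h \bv\in \bH^1_0(\Omega_h)$ and combining yields the discrete Poincaré inequality $\|\bv\|_{L^2(\Omega_h)}\le C|\bv|_h$. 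In particular, if $|\bv|_h = 0$, then $\bv\equiv 0$, so $|\cdot|_h$ is indeed a norm on $\bV^h$, and $a(\cdot,\cdot)$ is coercive with constant $\nu$.

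With coercivity, continuity, and the inf-sup condition of Theorem \ref{thm:GlobalInfsup} in hand, the standard well-posedness theory for discrete saddle-point problems (e.g., Brezzi's theorem applied in the finite-dimensional setting) yields a unique $(\bu_h,p_h)\in \bV^h\times Q^h$ solving \eqref{eqn:FEM}. Since the system is square and linear, one may alternatively argue directly: if $({\bm f}_h, 0) = (0,0)$, then testing \eqref{eqn:FEM1} against $\bv = \bu_h$ and using \eqref{eqn:FEM2} with $q = p_h$ gives $\nu|\bu_h|_h^2 = 0$, so $\bu_h = 0$ by the discrete Poincaré inequality; the inf-sup condition applied to $p_h$ then forces $p_h = 0$. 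Hence uniqueness, and therefore existence, follow.
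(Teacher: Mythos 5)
Your proposal is correct and follows essentially the same route as the paper, which simply invokes the inf-sup condition of Theorem \ref{thm:GlobalInfsup} together with standard saddle-point theory. The one detail you add --- verifying that the broken seminorm is a norm on the nonconforming space $\bV^h$ via the smoothing operator $\bE_h$ of Lemma \ref{lem:EhApprox} and the Poincar\'e inequality --- is exactly the argument the paper itself uses later (in the pressure estimate of Theorem \ref{thm:Main}), so your filling of that gap is consistent with, and slightly more explicit than, the paper's treatment.
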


Next, we show that, despite the non-inclusion $\nab \cdot \bV^h\not\subset Q^h$,
the finite element method yields exactly divergence--free velocity approximations.
\begin{lemma}\label{lem:DivFree}
Suppose  $\bu_h\in \bV^h$ satisfies \eqref{eqn:FEM2}.
Then $\nab \cdot \bu_h \equiv 0$ in $\Omega_h$.
\end{lemma}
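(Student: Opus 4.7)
The plan is to construct an explicit test function $q \in Q^h$ whose pairing with $\nab\cdot \bu_h$ collapses to a sum of squares, forcing it to vanish. The obstacle is that $\nab\cdot \bV^h \not\subset Q^h$ because of the Jacobian factor produced by the Piola transform; however, after multiplying the divergence by the Jacobian we land in the right polynomial class and can correct the mean-zero constraint by a single additive constant.

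First, I would exploit the Piola transform identity. For $T\in \mct$, write $\bu_h|_T(x) = A_T(\hat x)\hat \bu_h(\hat x)$ with $\hat \bu_h \in \hat\bV$. Standard Piola identities give
\[
(\nab\cdot \bu_h)(x) = \frac{(\hat\nab\cdot \hat \bu_h)(\hat x)}{\det(DF_T(\hat x))}.
\]
Since $\hat\bu_h|_{\hat K}$ is quadratic on each $\hat K \in \hat T^{ct}$, the numerator $\hat\nab\cdot\hat\bu_h$ lies in $\hat Q$. This motivates defining $q_0$ piecewise by $q_0|_T(x) := (\hat\nab\cdot\hat\bu_h)(\hat x) = \det(DF_T(F_T^{-1}(x)))(\nab\cdot\bu_h)(x)$, so that $q_0|_T\circ F_T \in \hat Q$ for every $T$.

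Next, I would adjust $q_0$ by a constant to satisfy the weighted mean-zero condition characterizing $Q^h$ from Theorem \ref{thm:GlobalProperties}. Namely, set $q := q_0 - c$ and choose $c\in \mathbb{R}$ such that
\[
\sum_{T\in \mct} 2|\tilde T|\int_T \frac{q_0-c}{\det(DF_T\circ F_T^{-1})}\,dx = 0;
\]
since $\int_T \det(DF_T\circ F_T^{-1})^{-1}\,dx = |\hat T| = 1/2$, this equation is solvable for $c$. Because $q_0|_T\circ F_T \in \hat Q$ and constants belong to $\hat Q$, we then have $q \in Q^h$ and it may be used as a test function in \eqref{eqn:FEM2}.

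Finally, I would compute
\[
0 = \int_{\Omega_h} (\nab\cdot\bu_h)\,q = \int_{\Omega_h}(\nab\cdot\bu_h)\,q_0 - c\int_{\Omega_h}\nab\cdot\bu_h.
\]
By Theorem \ref{thm:GlobalProperties}, $\bu_h \in \bH_0(\div;\Omega_h)$, so the divergence theorem gives $\int_{\Omega_h}\nab\cdot\bu_h = \int_{\p\Omega_h}\bu_h\cdot\bn = 0$. A change of variables on each $T$ evaluates the remaining term as
\[
\int_T (\nab\cdot\bu_h)\,q_0\,dx = \int_{\hat T} |\hat\nab\cdot\hat\bu_h|^2\,d\hat x,
\]
since two factors of $\det(DF_T)$ in the Jacobian cancel with the $\det(DF_T)^{-2}$ coming from $(\nab\cdot\bu_h)^2$ times the one factor in $q_0$. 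Summing over $T$, the identity $0 = \sum_{T\in\mct}\|\hat\nab\cdot\hat\bu_h\|_{L^2(\hat T)}^2$ forces $\hat\nab\cdot\hat\bu_h \equiv 0$ on each reference element, and hence $\nab\cdot\bu_h\equiv 0$ on $\Omega_h$. The only delicate point is the mean-zero correction, but the fact that $c$ multiplies $\int_{\Omega_h}\nab\cdot\bu_h$, which vanishes by the $\bH(\div)$-conformity, makes the adjustment free of charge.
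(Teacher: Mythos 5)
Your proposal is correct and follows essentially the same route as the paper: pull the divergence back to the reference element via the Piola identity so that it lands in $\hat Q$, verify membership in $Q^h$ through the characterization in Theorem \ref{thm:GlobalProperties}, and test in \eqref{eqn:FEM2} to obtain a sum of squares of $\hat\nab\cdot\hat\bu_T$. The only (harmless) difference is bookkeeping: the paper scales the test function by $1/(2|\tilde T|)$ on each element so the weighted mean--zero condition reduces directly to $\int_{\Omega_h}\nab\cdot\bu_h=0$, whereas you subtract a single global constant $c$ and then observe that $c$ multiplies $\int_{\Omega_h}\nab\cdot\bu_h=0$, so its value is irrelevant.
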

\begin{proof}
For each $T\in \calT_h$, write
%
$\bu_h|_T = A_T \hat \bu_T,$
with $\hat \bu_T\in \hat \bV$.
Define $q$ to be the piecewise function 
\[
q|_T(x) = \frac{1}{2 |\tilde T|}(\hat \nab \cdot \hat \bu_T)(\hat x),\qquad x = F_T(\hat x),\qquad T = G_h(\tilde T)
\]
for all $T\in \mct$.  Well-known properties of the Piola transform show
\begin{align*}
q|_T = \frac{\det(DF_T\circ F_T^{-1})}{2|\tilde T|} (\nab \cdot \bu_h|_T)\qquad \forall T\in \mct,
\end{align*}
and therefore
\begin{align*}
\sum_{T\in \mct} 2|\tilde T| \int_T \frac{q}{\det(DF_T\circ F_T^{-1})}
= \sum_{T\in \mct} \int_T \nab \cdot \bu_h = \int_{\p \Omega_h} \bu_h\cdot \bn = 0.
\end{align*}
Thus we conclude $q\in Q^h$ by Theorem \ref{thm:GlobalProperties}.

Next, by \eqref{eqn:FEM2}, 
\begin{align*}
0 = \int_{\Omega_h} (\nab \cdot \bu_h) q = 
\sum_{T\in \mct} \int_T (\nab \cdot \bu_h)q 
&= \sum_{T\in \mct} \frac1{2|\tilde T|} \int_{\hat T} \frac{\hat \nab \cdot \hat \bu_T}{\det(DF_T)} (\hat \nab \cdot \hat \bu_T)\det(DF_T)\\
& = \sum_{T\in \mct} \frac1{2|\tilde T|} \int_{\hat T} |\hat \nab \cdot \hat \bu_T|^2.
\end{align*}
Thus, $\hat \nab \cdot \hat \bu_T = 0$ for all $T\in \mct$, and therefore $\nab \cdot \bu_h=0$. 
\end{proof}

\subsection{Convergence Analysis}

Define the subspace of divergence--free functions:
\[
\bX^h:=\{\bv\in \bV^h:\ \nab \cdot \bv = 0\}\not \subset \bX:=\{\bv\in \bH^1_0(\Omega):\ \nab \cdot \bv=0\}.
\]
Then by Lemma \ref{lem:DivFree} and \eqref{eqn:FEM}, the discrete velocity solution is uniquely determined by
the problem: Find $\bu_h\in \bX^h$ such that
\begin{align*}
a_h(\bu_h,\bv):=\int_{\Omega_h} \nu \nab \bu_h: \nab \bv & = \int_{\Omega_h} {\bm f}_h\cdot \bv\qquad \forall \bv\in\bX^h.
\end{align*}
Standard theory of non--conforming and mixed finite element methods (e.g., \cite{Brenner}), along with Lemma \ref{lem:Interp} 
and Theorem \ref{thm:GlobalInfsup} yields 
\begin{align}\label{eqn:Strang}
\nu \|\nab (\bu-\bu_h)\|_{L^2(\Omega_h)}
&\le \inf_{\bw\in \bX^h} \nu \|\nab (\bu-\bw)\|_{L^2(\Omega_h)}+\sup_{\bv\in \bX^h\backslash \{0\}} \frac{a_h(\bu_h-\bu,\bv)}{\|\nab \bv\|_{L^2(\Omega_h)}}\\
&\nonumber\le C \inf_{\bw\in \bV^h} \nu \|\nab (\bu-\bw)\|_{L^2(\Omega_h)}+\sup_{\bv\in \bX^h\backslash \{0\}} \frac{a_h(\bu_h-\bu,\bv)}{\|\nab \bv\|_{L^2(\Omega_h)}}\\
%
&\nonumber\le C h^2 \nu \|\bu\|_{H^3(\Omega)}+\sup_{\bv\in \bX^h\backslash\{0\}} \frac{a_h(\bu_h-\bu,\bv)}{\|\nab \bv\|_{L^2(\Omega_h)}}.
\end{align}

Recalling  $-\nu \Delta \bu+\nab p = {\bm f}$ in $\bbR^2$ and $\bX^h\subset \bH_0({\rm div};\Omega)$, we have
\begin{align*}
a_h(\bu_h-\bu,\bv) 
%
&= \int_{\Omega_h} {\bm f}\cdot \bv - a_h(\bu,\bv)+\int_{\Omega_h} ({\bm f}_h-{\bm f})\cdot \bv\\
& = -\int_{\Omega_h} \nu \Delta \bu\cdot \bv + \int_{\Omega_h} \nab p\cdot \bv - a_h(\bu,\bv)+\int_{\Omega_h} ({\bm f}_h-{\bm f})\cdot \bv\\
& = -\int_{\Omega_h} \nu \Delta \bu\cdot \bv  - a_h(\bu,\bv)+\int_{\Omega_h} ({\bm f}_h-{\bm f})\cdot \bv\qquad \forall \bv\in \bX^h.
\end{align*}

We then integrate by parts to conclude 
\begin{align*}
-\int_{\Omega_h} \nu \Delta \bu\cdot \bv  - a_h(\bu,\bv)
 = \nu \sum_{e\in \mathcal{E}_h^I} \int_e \nab \bu :[\bv].
\end{align*}
where we used  $\bv$ is zero on $\p\Omega_h$.

Recall $\mathcal{E}_h^{I,\p}$ is the set of edges
in $\mathcal{E}_h^I$ that have one endpoint on $\p\Omega_h$.
Then by properties of $\bV^h$, there holds $[\bv]|_e=0$ for all $e\in \mathcal{E}^I_h\backslash \mathcal{E}_h^{I,\p}$, in particular,
\begin{align}\label{eqn:Strang22}
a_h(\bu_h-\bu,\bv)
& =  \nu \sum_{e\in \mathcal{E}_h^{I,\p}} \int_e \nab \bu :[\bv] +\int_{\Omega_h} ({\bm f}_h-{\bm f})\cdot \bv.
\end{align}

\begin{lemma}\label{lem:ConsEst}
There holds
\[
  \nu \sum_{e\in \mathcal{E}_h^{I,\p}} \int_e \nab \bu :[\bv]\le C \nu  h^2 \|\bu\|_{H^3(\Omega)}\|\nab \bv\|_{L^2(\Omega_h)}\qquad \forall \bv\in \bV^h.
\]
\end{lemma}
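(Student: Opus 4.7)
My plan is to isolate each $e \in \calE_h^{I,\p}$, bound $|\int_e \nab \bu : [\bv]|$ on the patch $\omega_e := T_+ \cup T_-$, and then sum using Cauchy--Schwarz and the finite overlap of the patches. Two prior results do the heavy lifting: Lemma \ref{lem:JumpEst}, which exploits the three Lagrange-point zero structure $[\bv](a_i)=0$ (via Simpson's rule) to give $|\int_e [\bv]| \le C h_T^3\|\nab \bv\|_{L^2(\omega_e)}$; and Lemma \ref{lem:EhApprox}, which supplies an $\bH^1_0(\Omega_h)$-conforming companion $\bE_h \bv$ with $\|\bv-\bE_h\bv\|_{L^2(T)}+h_T\|\nab(\bv-\bE_h\bv)\|_{L^2(T)} \le C h_T^2 \|\nab \bv\|_{L^2(T)}$.

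The central idea is to subtract the edge-patch mean of $\nab \bu$. Setting $\bC_e := |\omega_e|^{-1}\int_{\omega_e} \nab \bu$, I would split
\[
\int_e \nab \bu : [\bv] = \bC_e : \int_e [\bv] + \int_e (\nab \bu - \bC_e) : [\bv].
\]
For the constant piece, Lemma \ref{lem:JumpEst} applied componentwise together with the inverse-type estimate $|\bC_e| \le C h_T^{-1} \|\nab \bu\|_{L^2(\omega_e)}$ yields a bound of $C h_T^2 \|\nab \bu\|_{L^2(\omega_e)}\|\nab \bv\|_{L^2(\omega_e)}$, which sums to the correct order.

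For the residual piece, I would use that $[\bE_h \bv]|_e = 0$ (since $\bE_h \bv$ is globally $\bH^1_0$) to rewrite $[\bv] = [\bv - \bE_h \bv]$. Cauchy--Schwarz then produces two traces on $e$. The scaled trace inequality combined with Poincar\'e on $\omega_e$ (applicable since $\bC_e$ is the mean of $\nab \bu$) gives $\|\nab \bu - \bC_e\|_{L^2(e)} \le C h_T^{1/2}|\bu|_{H^2(\omega_e)}$, while the scaled trace inequality combined with Lemma \ref{lem:EhApprox} gives $\|[\bv-\bE_h\bv]\|_{L^2(e)} \le C h_T^{3/2}\|\nab \bv\|_{L^2(\omega_e)}$. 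Multiplying these two yields a residual bound of $C h_T^2 |\bu|_{H^2(\omega_e)}\|\nab \bv\|_{L^2(\omega_e)}$.

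Summing both contributions over $e\in \calE_h^{I,\p}$, Cauchy--Schwarz together with the bounded overlap of $\{\omega_e\}$ and the embedding $H^2(\Omega) \hookleftarrow H^3(\Omega)$ deliver the claimed $C h^2 \|\bu\|_{H^3(\Omega)}\|\nab \bv\|_{L^2(\Omega_h)}$. The hard part, and the reason for this orchestration, is that a direct Cauchy--Schwarz on $\int_e \nab \bu:[\bv]$ only delivers $O(h)$: each single trace costs a half-power of $h$. The $O(h^2)$ rate is recovered only by a \emph{double subtraction}: subtracting $\bC_e$ from $\nab \bu$ buys an extra $h^{1/2}$ from Poincar\'e after trace, while subtracting $\bE_h \bv$ from $\bv$ buys an extra $h$ from Lemma \ref{lem:EhApprox} after trace. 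Neither subtraction alone is enough, and the constant piece (which cannot benefit from the Poincar\'e gain) is instead absorbed by invoking the sharper jump-moment bound of Lemma \ref{lem:JumpEst}.
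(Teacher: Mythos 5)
Your proof is correct and follows essentially the same route as the paper: split $\nabla\bu$ into a constant plus a residual on each edge, absorb the constant piece via the jump-moment bound of Lemma \ref{lem:JumpEst}, and handle the residual by replacing $[\bv]$ with $[\bv-\bE_h\bv]$ and combining trace inequalities with Lemma \ref{lem:EhApprox}. The only (harmless) differences are that the paper uses the edge average $G_e$ of $\nabla\bu$ with the bound $|G_e|\le C|\bu|_{W^{1,\infty}(\Omega)}$ and the count $\sum_{e\in\mathcal{E}_h^{I,\partial}}h_e\le C$, whereas you use the patch average with $|\bC_e|\le Ch_T^{-1}\|\nabla\bu\|_{L^2(\omega_e)}$, which in fact yields the constant-piece bound with $L^2$-based norms without needing to exploit that the sum runs only over boundary-touching edges.
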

\begin{proof}
For each $e\in \mathcal{E}_h^{I,\p}$, let $G_e\in \bbR^{2\times 2}$ be the average of $\nab \bu$ on $e$. 
Standard interpolation estimates show 
\begin{align}\label{eqn:GeBound}
h_e^{-1} \|\nab \bu- G_e\|_{L^2(e)}^2\le C  |\bu|_{H^2(T)}\qquad h_e = {\rm diam}(e),
\end{align}
for $T$ satisfying $e\subset \p T$.  Moreover, we clearly have $|G_e|\le C |\bu|_{W^{1,\infty}(\Omega)}.$

Let $\bE_h \bv\in \bH^1_0(\Omega_h)$ satisfy \eqref{eqn:EhApprox}.
Then $[\bE_h \bv]|_e=0$ for all $e\in \mathcal{E}_h^I$, and so by \eqref{eqn:Strang22},
\begin{align}
\label{eqn:Strang33}
 \nu \sum_{e\in \mathcal{E}_h^{I,\p}} \int_e \nab \bu :[\bv]
%
& = \nu \sum_{e\in \mathcal{E}_h^{I,\p}} \Big(\int_e (\nab \bu-G_e): [\bv-\bE_h \bv] + \int_e G_e:[\bv]\Big)\\ 
&\nonumber =: I_1+I_2.
\end{align}

To bound $I_1$ we use the Cauchy--Schwarz inequality, \eqref{eqn:GeBound}, a trace inequality, and Lemma \ref{lem:EhApprox}:
\begin{align}\label{eqn:I1bound}
I_1 
&\le \nu  \Big(\sum_{e\in \mathcal{E}_h^{I,\p}} h_e^{-1}\|\nab \bu-G_e\|_{L^2(e)}^2\Big)^{1/2}
\Big(\sum_{e\in \mathcal{E}_h^{I,\p}} h_e \|[\bv-\bE_h\bv]\|_{L^2(e)}^2\Big)^{1/2}\\
&\nonumber\le C \nu  h^2 |\bu|_{H^2(\Omega)} \|\nab \bv\|_{L^2(\Omega_h)}.
\end{align}

To bound $I_2$, we apply Lemma \ref{lem:JumpEst}.
\begin{align}\label{eqn:I2bound}
I_2  
&=  \nu \sum_{e\in \mathcal{E}_h^{I,\p}} G_e :\int_e [\bv]\\
%
%
%
&\nonumber\le C \nu |\bu|_{W^{1,\infty}(\Omega)}\Big(\sum_{e\in \mathcal{E}_h^{I,\p}} h_e\Big)^{1/2}\Big(\sum_{e\in \mathcal{E}_h^{I,\p}}h_e^{-1}\Big|\int_e [\bv]\Big|^2\Big)^{1/2}\\
&\nonumber\le C \nu h^{5/2} \|\bu\|_{W^{1,\infty}(\Omega)} \Big( \sum_{e\in \mathcal{E}_h^{I,\p}} h_e\Big)^{1/2} \|\nab \bv\|_{L^2(\Omega_h)}
\le C \nu h^{5/2} \|\bu\|_{H^{3}(\Omega)} \|\nab \bv\|_{L^2(\Omega_h)}.
\end{align}
Combining \eqref{eqn:Strang33}--\eqref{eqn:I2bound} yields the desired result.
\end{proof}

Finally, we combine \eqref{eqn:Strang}, \eqref{eqn:Strang22},  Lemma \ref{lem:ConsEst} to obtain
the main result of the section.
\begin{theorem}\label{thm:Main}
There holds
\begin{align}\label{eqn:MainVelocityEstimate}
\|\nab (\bu- \bu_h)\|_{L^2(\Omega_h)}
&\le C\big( h^2 \|\bu\|_{H^3(\Omega)} + \nu^{-1} |{\bm f}-{\bm f}_h|_{X_h^*}\big),
\end{align}
where 
\[
|{\bm f}-{\bm f}_h|_{X_h^*} = \sup_{\bv\in \bX_h\backslash \{0\}} \frac{\int_{\Omega_h} ({\bm f}-{\bm f}_h)\cdot \bv}{\|\nab \bv\|_{L^2(\Omega_h)}}.
\]
Therefore if, for example, ${\bm f}_h$ is the nodal quadratic interpolant of ${\bm f}$, and if ${\bm f}$ is sufficiently smooth, there holds
\begin{align*}
\|\nab (\bu- \bu_h)\|_{L^2(\Omega_h)}
&\le C\big( h^2 \|\bu\|_{H^3(\Omega)} + \nu^{-1} h^3 \|{\bm f}\|_{H^3(\Omega)}\big).
\end{align*}

The pressure approximation satisfies
\begin{align}\label{eqn:PressureError}
\|p-p_h\|_{L^2(\Omega_h)}\le C\big( \nu \|\nab (\bu-\bu_h)\|_{L^2(\Omega_h)} + \nu h^2 \|\bu\|_{H^3(\Omega)}+ \inf_{q\in Q^h} \|p-q\|_{L^2(\Omega_h)} + \|{\bm f}-{\bm f}_h\|_{L^2(\Omega_h)}\big).
\end{align}
\end{theorem}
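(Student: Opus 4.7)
\medskip

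\noindent\textbf{Proof proposal.}  The velocity estimate \eqref{eqn:MainVelocityEstimate} comes essentially for free from the machinery already in place: substituting the decomposition \eqref{eqn:Strang22} into the abstract Strang bound \eqref{eqn:Strang}, one obtains
\[
\nu\|\nab(\bu-\bu_h)\|_{L^2(\Omega_h)}\le C\nu h^2\|\bu\|_{H^3(\Omega)}+\sup_{\bv\in \bX^h\setminus\{0\}}\frac{\nu\sum_{e\in\calE_h^{I,\p}}\int_e\nab\bu:[\bv]+\int_{\Omega_h}(\bhf-\bm f)\cdot\bv}{\|\nab\bv\|_{L^2(\Omega_h)}}.
\]
Lemma \ref{lem:ConsEst} absorbs the first supremum into the $h^2\|\bu\|_{H^3}$ term, and the second is exactly $\nu^{-1}|\bm f-\bm f_h|_{X_h^*}$ after dividing by $\nu$. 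This gives \eqref{eqn:MainVelocityEstimate}.

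For the concrete quadratic interpolant bound, standard interpolation yields $\|\bm f-\bm f_h\|_{L^2(\Omega_h)}\le Ch^3\|\bm f\|_{H^3(\Omega)}$; it remains to show $|\bm f-\bm f_h|_{X_h^*}\le C\|\bm f-\bm f_h\|_{L^2(\Omega_h)}$. For this I would use the weak-continuity machinery: for $\bv\in\bX^h$, invoke Lemma \ref{lem:EhApprox} to produce $\bE_h\bv\in \bH^1_0(\Omega_h)$, write $\|\bv\|_{L^2(\Omega_h)}\le \|\bv-\bE_h\bv\|_{L^2(\Omega_h)}+\|\bE_h\bv\|_{L^2(\Omega_h)}$, and combine \eqref{eqn:EhApprox} with the Poincar\'e inequality applied to $\bE_h\bv\in \bH^1_0(\Omega_h)$. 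The net bound is $\|\bv\|_{L^2(\Omega_h)}\le C\|\nab\bv\|_{L^2(\Omega_h)}$, after which a Cauchy--Schwarz gives the dual-norm estimate.

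For the pressure bound \eqref{eqn:PressureError}, the plan is the usual ``inf-sup plus consistency'' route. Fix $q\in Q^h$; by the inf-sup condition of Theorem \ref{thm:GlobalInfsup},
\[
\|q-p_h\|_{L^2(\Omega_h)}\le C\sup_{\bv\in\bV^h\setminus\{0\}}\frac{\int_{\Omega_h}(\nab\cdot\bv)(q-p_h)}{\|\nab\bv\|_{L^2(\Omega_h)}}.
\]
Split $q-p_h=(q-p)+(p-p_h)$; the first piece contributes $\|p-q\|_{L^2(\Omega_h)}$ via Cauchy--Schwarz. For $(p-p_h)$, I would subtract the discrete momentum equation \eqref{eqn:FEM1} from the continuous momentum equation tested against $\bv\in\bV^h$. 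The continuous equation, after integration by parts on each $T\in\mct$ and using $\bv\cdot\bn=0$ on $\p\Omega_h$, produces
\[
\int_{\Omega_h}(\nab\cdot\bv)(p-p_h)=\int_{\Omega_h}\nu\nab(\bu-\bu_h):\nab\bv-\nu\sum_{e\in\calE_h^{I,\p}}\int_e\nab\bu:[\bv]-\int_{\Omega_h}(\bm f-\bhf)\cdot\bv,
\]
since the interior-edge jumps vanish except on $\calE_h^{I,\p}$. The first term is handled by Cauchy--Schwarz, the consistency sum is exactly what Lemma \ref{lem:ConsEst} controls by $C\nu h^2\|\bu\|_{H^3(\Omega)}\|\nab\bv\|_{L^2(\Omega_h)}$, and the source term is handled by the Poincar\'e-type argument from the previous paragraph. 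Finally, the triangle inequality $\|p-p_h\|_{L^2(\Omega_h)}\le \|p-q\|_{L^2(\Omega_h)}+\|q-p_h\|_{L^2(\Omega_h)}$, with $q$ an infimizer over $Q^h$, yields \eqref{eqn:PressureError}.

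The main subtlety I expect is the integration-by-parts step in the pressure argument: one has to justify piecewise integration by parts against a velocity test function whose only continuity across curved-adjacent edges is three-point (the jump is not identically zero but is quantitatively small, by Lemma \ref{lem:JumpEst}), while still reconstructing the boundary terms that match the consistency error $\nu\sum\int_e\nab\bu:[\bv]$ from Section \ref{sec:FEM}. Once that identity is obtained, every remaining bound reuses the same weak-continuity machinery developed for the velocity estimate.
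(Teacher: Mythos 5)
Your proposal is correct and follows essentially the same route as the paper: the velocity bound by combining \eqref{eqn:Strang}, \eqref{eqn:Strang22}, and Lemma \ref{lem:ConsEst}, and the pressure bound by testing the error equation against $\bv\in\bV^h$, invoking Lemma \ref{lem:ConsEst} for the consistency jumps, the $\bE_h$/Poincar\'e argument for $\|\bv\|_{L^2(\Omega_h)}\le C\|\nab\bv\|_{L^2(\Omega_h)}$, and then the inf-sup condition of Theorem \ref{thm:GlobalInfsup} with a triangle inequality. The only discrepancy is notational (you wrote $\hat{\bm f}$ where ${\bm f}_h$ is meant), and the integration-by-parts step you flag as a subtlety is exactly what \eqref{eqn:Strang22} already records, so no new work is needed there.
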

\begin{proof}
The error estimate of the velocity follows from  \eqref{eqn:Strang}, \eqref{eqn:Strang22}, and Lemma \ref{lem:ConsEst}; 
thus, it remains to 
prove \eqref{eqn:PressureError}.

For any $q\in Q^h$ and $\bv\in \bV^h$, we have by \eqref{eqn:FEM} and Lemma \ref{lem:ConsEst},
\begin{align*}
&\int_{\Omega_h} (\nab \cdot \bv_h)(p_h-q) 
= a_h(\bu_h,\bv) -\int_{\Omega_h} (\nab \cdot \bv)q -\int_{\Omega_h} {\bm f}_h \cdot \bv\\
%
%
&\qquad = a_h(\bu_h-\bu,\bv)  -\int_{\Omega_h} (\nab \cdot \bv)(q-p)- \int_{\Omega_h} ({\bm f}_h-{\bm f})\cdot \bv + \nu \sum_{e\in \calE_h^{I,\p}}  \int_e \nab \bu:[\bv]\\
&\qquad \le C \big(\nu \|\nab (\bu-\bu_h)\|_{L^2(\Omega_h)} +  \nu h^2 \|\bu\|_{H^3(\Omega)}+   \|p-q\|_{L^2(\Omega_h)}\big)\|\nab \bv\|_{L^2(\Omega_h)} + \|{\bm f}-{\bm f}_h\|_{L^2(\Omega_h)}\|\bv\|_{L^2(\Omega_h)}.
\end{align*}
Using the estimate \eqref{eqn:EhApprox} and the Poincare inequality, we have 
\[
\|\bv\|_{L^2(\Omega_h)}\le \|{\bm E}_h \bv\|_{L^2(\Omega_h)}+ \|\bv-\bE_h \bv\|_{L^2(\Omega_h)}\le C \|\nab \bv\|_{L^2(\Omega_h)}.
\]
Therefore
\begin{align*}
\int_{\Omega_h} (\nab \cdot \bv_h)(p_h-q)
&\le C \big(\nu \|\nab (\bu-\bu_h)\|_{L^2(\Omega_h)} + \nu h^2 \|\bu\|_{H^3(\Omega)}\\
&\qquad +  \|p-q\|_{L^2(\Omega_h)} + \|{\bm f}-{\bm f}_h\|_{L^2(\Omega_h)}\big)\|\nab \bv\|_{L^2(\Omega_h)}.
\end{align*}
We then use the inf-sup condition given in Theorem \ref{thm:GlobalInfsup} to obtain
\begin{align*}
 C \|p_h-q\|_{L^2(\Omega_h)} 
&\le \sup_{\bv\in \bV^h\backslash \{0\}} \frac{\int_{\Omega_h} (\nab \cdot \bv)(p_h-q)}{\|\nab \bv\|_{L^2(\Omega_h)}} \\
&\le C \big(\nu \|\nab (\bu-\bu_h)\|_{L^2(\Omega_h)} +  \nu h^2 \|\bu\|_{H^3(\Omega)}+ \|p-q\|_{L^2(\Omega_h)} + \|{\bm f}-{\bm f}_h\|_{L^2(\Omega_h)}\big).
\end{align*}
Applying the triangle inequality and taking the infimum over $q\in Q^h$, we obtain \eqref{eqn:PressureError}.
\end{proof}

\section{A Pressure Robust Scheme}\label{sec:Robust}
In this section, we construct a computable approximation ${\bm f}_h$
such that the term $\nu^{-1} |{\bm f}-{\bm f}_h|_{\bX_h^*}$ appearing in estimate \eqref{eqn:MainVelocityEstimate}
is independent of the viscosity, in particular, such that the method is pressure robust.
Essentially, this construction
is done by applying a commuting operator to the function ${\bm f}|_{\Omega}$.
In particular, we adopt and modify the recent results in \cite{FuGuzmanNeilan2020}
for Scott--Vogelius elements to construct commuting operators
on meshes with curved boundary.

To discuss the main objections of this section further, we define
the rot operator
\[
{\rm rot}\,\bv = \frac{\p v_2}{\p x_1} - \frac{\p v_1}{\p x_2},
\]
and the corresponding Hilbert space
\begin{align*}
\bH({\rm rot};\Omega_h):=\{\bv\in \bL^2(\Omega_h):\ {\rm rot}\,\bv\in L^2(\Omega_h)\}.
\end{align*}

The main goal of this section is to prove the following result.
\begin{theorem}\label{thm:Commute}
There exists finite element spaces $\bW_h\subset \bH({\rm rot};\Omega_h)$,
 $\Sigma_h\subset H^1_0(\Omega)$ with respect to the partition
$\mct$, and operators $\bPi_W:\bH^2(\Omega)\to \bW_h$
and $\Pi_{\Sigma}:H^3(\Omega)\to \Sigma_h$ such that
\begin{equation}\label{eqn:CommutingP}
\bPi_{W} \nab p = \nab \Pi_\Sigma p\qquad \forall p\in H^3(\Omega).
\end{equation}
Moreover, there holds for any ${\bm f}\in H^3(\Omega)$,
\begin{equation}\label{eqn:PiWEst}
\|{\bm f}-\bPi_W {\bm f}\|_{L^2(\Omega_h)}\le C h^2 \|{\bm f}\|_{H^3(\Omega)},
\end{equation}
where ${\bm f}$ in the left-hand side of the above inequality is an $H^3$ extension of ${\bm f}|_{\Omega}$.
\end{theorem}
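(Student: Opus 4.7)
The strategy is to build the pair $(\bW_h,\Sigma_h)$ from a reference macro-element commuting pair on $\hat T^{ct}$ and then transport both spaces to $\mct$ using pullbacks compatible with the gradient. Adapting the construction of \cite{FuGuzmanNeilan2020} to the macro-element viewpoint of this paper, the first step is to introduce on $\hat T^{ct}$ a Hsieh--Clough--Tocher type space $\hat \Sigma\subset C^0(\hat T)$ of piecewise cubics (specified by vertex values and suitable jets/normal moments), together with a piecewise quadratic space $\hat \bW\subset \bH({\rm rot};\hat T)$ with single-valued tangential traces across shared edges, chosen so that $\hnab \hat \Sigma\subset \hat \bW$. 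Reference interpolants $\hat \Pi_\Sigma$ and $\hat{\bPi}_W$ are then defined via nodal values and tangential edge/interior moments so that $\hat{\bPi}_W \hnab \hat p = \hnab \hat \Pi_\Sigma \hat p$ for every $\hat p\in H^3(\hat T)$, along with the standard reference approximation bound $\|\hat{\bm f}-\hat{\bPi}_W \hat{\bm f}\|_{L^2(\hat T)}\le C|\hat{\bm f}|_{H^3(\hat T)}$.

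The physical spaces are then transported element-by-element using pullbacks matched to the differential operator involved. For $\Sigma_h$, I would use ordinary composition $\sigma|_T = \hat\sigma\circ F_T^{-1}$, match DOFs across shared edges, and impose vanishing on $\p\Omega_h$ to obtain $\Sigma_h\subset H^1_0(\Omega_h)$. For $\bW_h$, the natural map is the covariant Piola transform $\bm w|_T(x) = DF_T(\hat x)^{-T}\hat{\bm w}(\hat x)$, which preserves tangential traces and therefore gives $\bW_h\subset \bH({\rm rot};\Omega_h)$; since $F_T$ is affine on each straight edge of $\mct$, this transform restricts to a constant invertible linear map on every such edge, so tangential moments of physical functions correspond bijectively with tangential moments of their reference preimages. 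The physical interpolants $\Pi_\Sigma$ and $\bPi_W$ are then defined on each $T$ by conjugating $\hat\Pi_\Sigma$ and $\hat{\bPi}_W$ with these pullbacks, applied to the $H^3$-extension from $\Omega$ to $\bbR^2$ provided at the start of Section \ref{sec:FEM}.

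The commuting identity \eqref{eqn:CommutingP} then follows from the pointwise chain-rule relation
\[
(\nab p)(F_T(\hat x)) = DF_T(\hat x)^{-T}\hnab(p\circ F_T)(\hat x),
\]
which expresses that the covariant pullback of $\nab p$ equals $\hnab$ applied to the scalar pullback of $p$; applying $\hat{\bPi}_W$ to both sides, invoking the reference commuting identity, and pushing forward returns $\bPi_W\nab p = \nab\Pi_\Sigma p$. The approximation estimate \eqref{eqn:PiWEst} then follows from the reference Bramble--Hilbert bound, the scaling bounds of Lemmas \ref{lem:scaling} and \ref{lem:ATLem} (with $DF_T^{-T}$ now playing the role played by $A_T$ in Section \ref{sec:LocalSpaces}), and summation over $T\in\mct$.

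The main obstacle is the joint design of the reference DOFs on $\hat\Sigma$ and $\hat{\bW}$ so that three requirements hold simultaneously: the reference commuting identity, global $\bH({\rm rot};\Omega_h)$-conformity of $\bW_h$ under the covariant Piola pullback, and single-valuedness of the induced interpolation DOFs across shared edges of $\mct$. The last condition is the delicate one, since each tangential-moment DOF must transform under $DF_T^{-T}$ so as to produce the same physical quantity from both adjacent elements; the affinity of $F_T$ on straight edges, together with Lemma \ref{lem:cofactorOnEdge}, is what ultimately makes this compatible, but the detailed bookkeeping will require the DOF choices of \cite{FuGuzmanNeilan2020} carefully adapted to the isoparametric macro-element setting of the present paper.
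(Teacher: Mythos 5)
Your overall architecture matches the paper's: the paper takes $\bW(T)$ to be the covariant Piola image $(DF_T)^{-\intercal}\hat\bV$ of the \emph{same} piecewise-quadratic reference space $\hat\bV$, takes $\Sigma(T)$ to be the composed Hsieh--Clough--Tocher cubic space, observes $\nab \Sigma(T)\subset \bW(T)$, and defines both interpolants through matched degrees of freedom (vertex values, midpoint normal components, tangential edge integrals, interior ${\rm rot}$-moments against $Q_0(T)$). The inter-element compatibility that you single out as the main obstacle is in fact the easy part: it is handled by the affinity of $F_T$ on straight edges and Lemma \ref{lem:cofactorOnEdge}, exactly as for $\bV^h$.

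The genuine gap is elsewhere. Your interpolants are obtained by pulling functions back over the whole computational element $T$, so they require input defined on $\Omega_h$, and $\Omega_h\not\subset\Omega$. You patch this with the $H^3$ extension, but then \eqref{eqn:CommutingP} as stated fails: $\bPi_W$ must act on $\nab p$ for $p\in H^3(\Omega)$, i.e.\ on an extension $E(\nab p)$ of the gradient, while your chain-rule argument produces $\nab \Pi_\Sigma p$ from the gradient $\nab(Ep)$ of the extended scalar. These two fields agree only on $\Omega$, and your tangential edge integrals and interior ${\rm rot}$-moments sample them on $T\setminus\Omega$, where a generic extension operator does not commute with $\nab$. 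Since the exactness of \eqref{eqn:CommutingP} is precisely what the pressure-robustness argument exploits, this cannot be waved away as a higher-order perturbation. The paper's device is to define those two families of DOFs of $\bPi_W^T$ over the \emph{geometric} objects $e_R=G(G_h^{-1}(e))\subset\bar\Omega$ and $T\cap T_R$ (see \eqref{eqn:PiW3}--\eqref{eqn:PiW4}), so that every functional sees only $p|_{\bar\Omega}$; the identity then follows from ${\rm rot}\,\nab p=0$ and $\int_{e_R}\nab p\cdot \bt_{e_R}=p(\alpha_2)-p(\alpha_1)$, the endpoints of $e_R$ and $e$ being the same. The price, which your sketch of \eqref{eqn:PiWEst} also omits, is an additional geometric-consistency estimate: one must compare $\bPi_W{\bm f}$ with the purely computational interpolant $\bI_W{\bm f}$ (which is essentially your operator) using $|\Theta(x)-x|=\mathcal{O}(h_T^3)$ and $|T\setminus T_R|=\mathcal{O}(h_T^3)$; your Bramble--Hilbert-plus-scaling argument only bounds $\|{\bm f}-\bI_W{\bm f}\|_{L^2(T)}$, not $\|\bI_W{\bm f}-\bPi_W{\bm f}\|_{L^2(T)}$.
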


\begin{corollary}
Let $(\bu_h,p_h)\in \bV_h\times Q_h$ be
the solution of the finite element method \eqref{eqn:FEM}
with ${\bm f}_h = \bPi_W {\bm f}$.  Then there holds
\begin{align*}
\|\nab (\bu-\bu_h)\|_{L^2(\Omega_h)}\le C h^2 \|\bu\|_{H^5(\Omega)}.
\end{align*}
\end{corollary}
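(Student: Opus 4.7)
The plan is to combine Theorem \ref{thm:Main} with Theorem \ref{thm:Commute} in a classical pressure-robust fashion, using the Helmholtz-like identity for $\bm f$ together with the commuting diagram property to remove the pressure contribution from the consistency error.

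First I would start from the velocity estimate \eqref{eqn:MainVelocityEstimate} and concentrate on the dual norm $|\bm f - \bPi_W \bm f|_{X_h^*}$. Using ${\bm f} = -\nu \Delta \bu + \nab p$, the linearity of $\bPi_W$, and the commuting property \eqref{eqn:CommutingP}, I would write
\[
\bm f - \bPi_W \bm f = -\nu \bigl(\Delta \bu - \bPi_W \Delta \bu\bigr) + \bigl(\nab p - \nab \Pi_\Sigma p\bigr).
\]
Next, for any $\bv \in \bX_h$, I would integrate the second term by parts on $\Omega_h$: since $\nab \cdot \bv = 0$ in $\Omega_h$ by definition of $\bX_h$, and $\bv \cdot \bn = 0$ on $\p\Omega_h$ by Theorem \ref{thm:GlobalProperties}, both the volume and boundary contributions vanish, so
\[
\int_{\Omega_h} \nab(p - \Pi_\Sigma p) \cdot \bv = 0.
\]
This is the key identity that makes the scheme pressure robust — the pressure-gradient piece of $\bm f$ is annihilated against divergence-free, normal-zero test functions.

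It then remains to control the viscous remainder. Applying \eqref{eqn:PiWEst} with $\bm f$ replaced by $\Delta \bu$ (which is allowed because $\bu \in \bH^5(\Omega)$ implies $\Delta \bu \in \bH^3(\Omega)$), I would use Cauchy--Schwarz to obtain
\[
\Bigl|\int_{\Omega_h} (\Delta \bu - \bPi_W \Delta \bu)\cdot \bv\Bigr| \le C h^2 \|\bu\|_{H^5(\Omega)} \|\bv\|_{L^2(\Omega_h)}.
\]
To pass from $\|\bv\|_{L^2(\Omega_h)}$ to $\|\nab \bv\|_{L^2(\Omega_h)}$, I would reuse the Poincar\'e-type bound already established in the proof of Theorem \ref{thm:Main} via the auxiliary operator $\bE_h$ from Lemma \ref{lem:EhApprox}, namely $\|\bv\|_{L^2(\Omega_h)} \le C \|\nab \bv\|_{L^2(\Omega_h)}$. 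Combining yields
\[
\nu^{-1} |\bm f - \bPi_W \bm f|_{X_h^*} \le C h^2 \|\bu\|_{H^5(\Omega)},
\]
where the factor $\nu$ from $-\nu \Delta \bu$ exactly cancels the $\nu^{-1}$ in \eqref{eqn:MainVelocityEstimate}. Substituting back into \eqref{eqn:MainVelocityEstimate} and absorbing $\|\bu\|_{H^3(\Omega)}$ into $\|\bu\|_{H^5(\Omega)}$ gives the claimed estimate.

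Since most of the work is carried by Theorem \ref{thm:Commute} and by the estimates already assembled in Theorem \ref{thm:Main}, the corollary is essentially a direct consequence once one notices the two crucial ingredients. The only point that requires a bit of care is the integration-by-parts step: one must explicitly invoke both $\bv \in \bH_0(\mathrm{div};\Omega_h)$ and $\nab\cdot\bv = 0$ in $\Omega_h$ (not merely weakly) — this is guaranteed by Lemma \ref{lem:DivFree}, and is really the heart of why the construction is pressure robust.
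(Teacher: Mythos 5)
Your proposal is correct and follows essentially the same route as the paper: decompose ${\bm f}-\bPi_W{\bm f}$ via linearity and the commuting identity \eqref{eqn:CommutingP}, annihilate the pressure-gradient term against $\bv\in\bX^h$ using $\nab\cdot\bv=0$ and $\bv\cdot\bn|_{\p\Omega_h}=0$, and bound the viscous remainder with \eqref{eqn:PiWEst} applied to $\Delta\bu$. Your explicit invocation of the Poincar\'e-type bound $\|\bv\|_{L^2(\Omega_h)}\le C\|\nab\bv\|_{L^2(\Omega_h)}$ is a step the paper leaves implicit, but otherwise the arguments coincide.
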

\begin{proof}
In light of estimate \eqref{eqn:MainVelocityEstimate}, it suffices to show $|{\bm f}-{\bm f}_h|_{X_h^*}\le C \nu h^2 \|\bu\|_{H^5(\Omega)}$.

Recall that the extension of ${\bm f}|_{\Omega}$ is given by
${\bm f} = -\nu \Delta \bu + \nab p$.  Therefore by Theorem \ref{thm:Commute}, for all $\bv\in \bX_h$,
\begin{align*}
\int_{\Omega_h} ({\bm f}-{\bm f}_h)\cdot \bv 
& = \int_{\Omega_h} \big(-\nu (\Delta \bu-\bPi_W \Delta \bu) + (\nab p - \bPi_W \nab p)\big)\cdot \bv\\
& = \int_{\Omega_h} \big(-\nu (\Delta \bu-\bPi_W \Delta \bu) + \nab ( p -  \Pi_{\Sigma} p)\big)\cdot \bv\\
& = -\nu \int_{\Omega_h} (\Delta \bu-\bPi_W \Delta \bu) \cdot \bv,
\end{align*}
where used that $\nab \cdot \bv=0$ and $\bv\cdot \bn|_{\p\Omega_h}=0$.
Consequently,
\begin{align*}
|{\bm f}-{\bm f}_h|_{X_h^*}\le C \nu \|\Delta \bu- \bPi_W \Delta \bu\|_{L^2(\Omega_h)}\le C h^2 \nu  \|\Delta \bu\|_{H^3(\Omega)}\le C \nu h^2 \|\bu\|_{H^5(\Omega)}.
\end{align*}\hfill
\end{proof}

\subsection{Proof of Theorem \ref{thm:Commute}: Preliminaries}

As a first step of the proof of Theorem \ref{thm:Commute}, we ``rotate'' the space $\bV(T)$.
\begin{definition}
We define
\begin{align*}
\bW(T)& = \{\bv\in \bH^1(T):\ \bv(x) = (DF_T(\hat x))^{-\intercal} \hat \bv(\hat x),\ \exists \hat \bv\in \hat \bV\},\\
\bW_0(T)& = \bW(T)\cap \bH^1_0(T).
\end{align*}
\end{definition}

\begin{remark}
Define 
\[
S = \begin{pmatrix}
0 & -1\\
1 & 0
\end{pmatrix},
\]
so that
${\rm rot}(S\bv) = \nab \cdot \bv$, and $S DF_TS^{-1} = \det(DF_T) (DF_T)^{-\intercal}$. 
Therefore, if $\bv(x) =  (DF_T(\hat x))^{-\intercal} \hat \bv(\hat x)$, we have
\begin{align*}
{\rm rot}\, \bv(x)
& = {\rm rot}\Big(S \frac{DF_T(\hat x)S^{-1}\hat \bv(\hat x)}{\det(DF_T(\hat x))}\Big)
 = \nab \cdot \Big( \frac{DF_T(\hat x)S^{-1}\hat \bv(\hat x)}{\det(DF_T(\hat x))}\Big)
%
%
 = \frac{\hat {\rm rot}\,\hat \bv(\hat x)}{\det(DF_T(\hat x))}.
\end{align*}
\end{remark}

\begin{remark}
Note that $\hat {\rm rot}:\hat \bV\to \hat Q$ is a surjection.
Indeed, let $\hat q\in \hat Q$. 
Then there exists $\hat \bv\in \bV$ such that $\hat \nab \cdot \hat \bv = \hat q$.
Then set $\hat \bw = S \hat \bv$ so that $\hat q = \hat \nab \cdot \hat \bv = \hat {\rm rot}\,\hat \bw$.
Similar arguments show  $\hat {\rm rot}:\hat \bV_0\to \hat Q_0$ is a bijection.
\end{remark}

\begin{lemma}\label{lem:WDOFs}
Let $\{\hat \alpha_i\}_{i=1}^3,\{\hat m_i\}_{i=1}^3 \subset \mathcal{N}_{\hat T}$
be, respectively, the vertices and edge midpoints of $\hat T$.
Set $\alpha_i = F_T(\hat \alpha_i)$ and $m_i = F_T(\hat m_i)$ to be the corresponding points on $T$.
Any $\bv\in \bW(T)$ is uniquely determined by the values
\begin{subequations}
\label{eqn:WDOFS}
\begin{alignat}{2}
&\bv(\alpha_i),\ (\bv\cdot \bn)(m_i)\qquad &&i=1,2,3,\\
&\int_e \bv\cdot \bt\qquad &&\text{$\forall$ edges of $T$},\\
&\int_T ({\rm rot}\,\bv)q\qquad &&\forall q\in {Q}_0(T).
\end{alignat}
\end{subequations}
\end{lemma}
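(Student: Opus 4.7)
The plan is to transfer the problem to the reference triangle via the covariant pullback $\bv(x) = (DF_T(\hat x))^{-\intercal} \hat \bv(\hat x)$ with $\hat \bv \in \hat \bV$, and show that if all DOFs in \eqref{eqn:WDOFS} vanish then $\hat \bv \equiv 0$.  A dimension count, $6 + 3 + 3 + \dim \hat Q_0 = 20 = \dim \hat \bV$, reduces the claim to this unisolvence statement on the reference.

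On each edge $e = F_T(\hat e)$ of $T$, the vanishing of the vertex DOFs immediately gives $\hat \bv(\hat \alpha_i) = 0$ since $DF_T^{-\intercal}$ is nonsingular.  The tangential edge integral transforms cleanly under the covariant map: using $\bt = DF_T \hat \bt / |DF_T \hat \bt|$ and $d\sigma = |DF_T \hat \bt|\, d\hat s$, the Jacobians cancel and I get $\int_e \bv \cdot \bt\, d\sigma = \int_{\hat e} \hat \bv \cdot \hat \bt\, d\hat s$, valid for both straight and curved $e$.  Since $\hat \bv \cdot \hat \bt$ is a single quadratic polynomial on $\hat e$ (the Clough--Tocher split does not subdivide the edges of $\hat T$) which vanishes at both endpoints and integrates to zero, it vanishes identically on $\hat e$.

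The delicate step is the pointwise normal condition $(\bv \cdot \bn)(m_i) = 0$, since normal components are not preserved by the covariant transform.  I would decompose $DF_T^{-1}\bn$ in the reference orthonormal basis $\{\hat \bn, \hat \bt\}$ as $DF_T^{-1}\bn = \mu \hat \bn + \beta \hat \bt$, where $\mu = |DF_T^{-\intercal} \hat \bn| > 0$ follows from $\bn = DF_T^{-\intercal} \hat \bn / |DF_T^{-\intercal} \hat \bn|$.  This yields $\bv \cdot \bn = \mu(\hat \bv \cdot \hat \bn) + \beta(\hat \bv \cdot \hat \bt)$ at every point.  Evaluating at $\hat m_i$, the $\beta$ contribution is already zero by the previous step, so $(\hat \bv \cdot \hat \bn)(\hat m_i) = 0$; combined with the two endpoint zeros, the quadratic $\hat \bv \cdot \hat \bn$ on $\hat e$ vanishes at three distinct points and is therefore identically zero.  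Consequently $\hat \bv \equiv 0$ on $\p \hat T$, i.e.\ $\hat \bv \in \hat \bV_0$.

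Finally, I would exploit the interior rot DOFs.  Using the identity ${\rm rot}\, \bv = \hat{\rm rot}\, \hat \bv / \det(DF_T)$ and the composition rule $q(x) = \hat q(\hat x)$ defining $Q_0(T)$, the $\det(DF_T)$ factors cancel and $\int_T ({\rm rot}\, \bv)\, q\, dx = \int_{\hat T} (\hat{\rm rot}\, \hat \bv)\, \hat q\, d\hat x = 0$ for every $\hat q \in \hat Q_0$.  Integration by parts on $\hat T$ shows $\hat{\rm rot}\, \hat \bv \in \hat Q_0$ because $\hat \bv$ vanishes on $\p \hat T$, so choosing $\hat q = \hat{\rm rot}\, \hat \bv$ forces $\hat{\rm rot}\, \hat \bv \equiv 0$.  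The bijectivity of $\hat{\rm rot}: \hat \bV_0 \to \hat Q_0$ stated in the preceding remark then gives $\hat \bv \equiv 0$, hence $\bv \equiv 0$.  The main obstacle is the normal midpoint step, whose success hinges on the ordering: the tangential DOFs must be processed first so that the off-diagonal $\beta$-term disappears at $\hat m_i$ and we can recover a clean scalar identity for $\hat \bv \cdot \hat \bn$.
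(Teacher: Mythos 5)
Your proposal is correct and follows essentially the same route as the paper: pull back by the covariant transform, kill the tangential trace via the vertex values and the edge integrals, use the pointwise normal condition at the midpoints to get $\hat\bv|_{\partial\hat T}=0$, and then use the rot DOFs together with the bijectivity of $\hat{\rm rot}:\hat\bV_0\to\hat Q_0$. Your explicit $\{\hat\bn,\hat\bt\}$ decomposition of $DF_T^{-1}\bn$ is just a more spelled-out version of the paper's observation that $DF_T^{-1}DF_T^{-\intercal}\hat\bn$ is not tangent to $\hat e$, and your added dimension count and the remark that $\hat{\rm rot}\,\hat\bv$ has zero mean are details the paper leaves implicit.
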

\begin{proof}
Write $\bv(x) = DF_T^{-\intercal} \hat \bv$ for some $\hat \bv\in \hat \bV$,
and suppose that $\bv$ vanishes on the DOFs.
We show $\hat \bv\equiv 0$.

We clearly have $\hat \bv(\hat \alpha_i) = 0$ for $i=1,2,3$,
and by using the relation
$\bt = {DF_T\hat \bt}/{|DF_T \hat \bt|}$ \cite{MonkBook},
and a change of variables, we have
\begin{align*}
0 = \int_e \bv\cdot \bt = \int_{\hat e} \frac{(DF_T^{-\intercal} \hat \bv)\cdot (DF_T\hat \bt)}{|DF_T\hat \bt|}
|\det(DF_T)| |DF^{-\intercal} \hat \bn| = \int_{\hat e} \hat \bv\cdot \hat \bt,
\end{align*}
where we used the identity $|\det(DF_T)| |DF^{-\intercal} \hat \bn| = |DF_T \hat \bt|$.
Thus, we conclude  $\hat \bv\cdot \hat \bt|_{\p \hat T} = 0$.

Similarly, using the relation
$\bn = {DF_T^{-\intercal}\hat \bn}/{|DF_T^{-\intercal} \hat \bn|}$,
we compute
\begin{align*}
0 &= (\bv\cdot \bn)(m_i)
%
 = \frac{\hat \bv \cdot (DF_T^{-1} DF_T^{-\intercal} \hat \bn)}{|DF_T^{-\intercal} \hat \bn|}(\hat m_i).
\end{align*}
Because $(DF_T^{-1} DF_T^{-\intercal} \hat \bn)\cdot \hat \bn = |DF_T^{-\intercal} \hat \bn|^2 \neq 0$, 
we conclude  $(DF_T^{-1} DF_T^{-\intercal} \hat \bn)$ is not
tangent to $\hat \bt$. Thus, since $\hat \bv\cdot \hat \bt|_{\p \hat T}=0$, 
we get  $\hat \bv|_{\p \hat T}=0$, i.e., $\hat \bv\in \hat \bV_0$.

Now let $\hat q\in \hat Q_0$, and set $q(x) = \hat q(\hat x)$ so that $q\in Q_0(T)$.
Using ${\rm rot}\,\bv = {\hat {\rm rot}\, \hat \bv}/{\det(DF_T)}$, we have by a change of variables,
\[
\int_T ({\rm rot}\, \bv) q = \int_{\hat T} (\hat {\rm rot}\, \hat \bv) \hat q.
\]
Taking $\hat q = \hat {\rm rot} \, \hat \bv$, we conclude $\hat {\rm rot}\, \hat \bv=0$.
This implies $\hat \bv\equiv 0$, and therefore $\bv\equiv 0$.
\end{proof}

Next, we define the local Clough-Tocher space on the reference element
\[
\hat \Sigma = \{\hat \sigma \in H^2(\hat T):\ \hat \sigma|_{\hat K}\in \pol_3(\hat K)\ \forall \hat K\in \hat T^{ct}\}.
\]
It is known that the dimension of $\hat \Sigma$ is $12$ \cite{Ciarlet}, and any $\hat \sigma\in \hat \Sigma$
is uniquely determined by the valuess
\begin{alignat}{2}\label{eqn:ShatDOFs}
&\hat \nabla \hat \sigma (\hat \alpha_i),\ \hat \sigma(\hat \alpha_i),\ (\hat \nab \hat \sigma\cdot \hat \bn)(\hat m_i)\qquad &&i=1,2,3.%
\end{alignat}

We define the Clough--Tocher space on $T$ via composition
\[
\Sigma(T) = \{\sigma:\ \sigma(x) = \hat \sigma(\hat x),\ \exists \hat \sigma\in \hat \Sigma\}.
\]
It is easy to see $\Sigma(T) \subset H^2(T)$.
In the following lemma, we extend
the above DOFs to $\Sigma(T)$.
\begin{lemma}\label{lem:SDOFs}
A function $\sigma\in \Sigma(T)$ is uniquely determined by the values
\begin{subequations}
\label{eqn:SDOFsS}
\begin{alignat}{2}
& \nabla  \sigma (\alpha_i),\  \sigma(\alpha_i),\ (\nab  \sigma\cdot  \bn)(m_i)\qquad &&i=1,2,3.
%
\end{alignat}
\end{subequations}
\end{lemma}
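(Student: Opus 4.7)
My plan is to reduce the statement to the unisolvence of the reference Clough--Tocher element via the chain rule, carefully handling the normal-derivative degrees of freedom, which do not transform as simply as the value and gradient DOFs.

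First I would observe that the listed DOFs number $12$ (three function values, six gradient components, three normal derivatives), matching $\dim \Sigma(T) = \dim \hat \Sigma = 12$. Hence it suffices to show that if $\sigma\in \Sigma(T)$ annihilates all the DOFs, then $\sigma\equiv 0$. Writing $\sigma(x) = \hat\sigma(\hat x)$ with $\hat \sigma\in \hat \Sigma$, the plan is to verify the reference DOFs \eqref{eqn:ShatDOFs} are all zero, from which $\hat\sigma\equiv 0$ by unisolvence of the reference Clough--Tocher element.

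The two easy families are immediate: $\hat \sigma(\hat \alpha_i) = \sigma(\alpha_i) = 0$, and since $\nabla \sigma(x) = DF_T^{-\intercal}(\hat x)\,\hat \nabla \hat\sigma(\hat x)$ and $DF_T^{-\intercal}$ is invertible on $\hat T$, the condition $\nabla \sigma(\alpha_i)=0$ forces $\hat \nabla \hat\sigma(\hat\alpha_i) = 0$ for $i=1,2,3$. The hard part, as expected, is the normal derivative at edge midpoints: using $\bn = DF_T^{-\intercal}\hat \bn/|DF_T^{-\intercal}\hat \bn|$, the condition $(\nabla \sigma\cdot \bn)(m_i)=0$ translates to
\[
\hat \nabla \hat\sigma(\hat m_i)^{\intercal}\,DF_T^{-1}(\hat m_i)DF_T^{-\intercal}(\hat m_i)\hat \bn = 0,
\]
and the vector $DF_T^{-1}DF_T^{-\intercal}\hat \bn$ is in general neither parallel to $\hat \bn$ nor to $\hat \bt$, so this does not immediately yield $(\hat \nabla \hat \sigma\cdot \hat \bn)(\hat m_i)=0$.

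To bridge this gap, I would first extract tangential information along each reference edge $\hat e\subset \p \hat T$. Since $\hat \sigma|_{\hat e}$ is a cubic polynomial in the edge parameter with $\hat \sigma(\hat \alpha_i) = 0$ and $\hat \nabla \hat \sigma(\hat \alpha_i)=0$ at both endpoints (hence the tangential derivative vanishes at the endpoints as well), $\hat \sigma|_{\hat e}$ has a double root at each endpoint and therefore vanishes identically on $\hat e$. In particular $(\hat \nabla \hat \sigma\cdot \hat \bt)(\hat m_i) = 0$, so $\hat \nabla \hat \sigma(\hat m_i)$ is a scalar multiple of $\hat \bn$, say $\hat \nabla \hat \sigma(\hat m_i) = c\,\hat \bn$. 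Substituting into the transformed normal-derivative condition gives $c\,|DF_T^{-\intercal}(\hat m_i)\hat \bn|^2 = 0$, and since $DF_T^{-\intercal}\hat \bn \neq 0$, we conclude $c=0$, i.e., $\hat \nabla \hat \sigma(\hat m_i)=0$ and in particular $(\hat \nabla \hat \sigma\cdot \hat \bn)(\hat m_i)=0$. All the reference DOFs vanish, so $\hat \sigma\equiv 0$ and hence $\sigma\equiv 0$, completing the proof.
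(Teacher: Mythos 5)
Your proposal is correct and follows essentially the same route as the paper: pull back to the reference Clough--Tocher element, dispose of the value and gradient DOFs at the vertices immediately, observe that $\hat\sigma$ vanishes on $\p\hat T$ so the tangential derivative vanishes at the edge midpoints, and then use that $DF_T^{-1}DF_T^{-\intercal}\hat\bn$ has nonzero inner product with $\hat\bn$ to kill the remaining normal component. Your explicit cubic double-root argument for $\hat\sigma|_{\hat e}\equiv 0$ just fills in a step the paper leaves implicit.
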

\begin{proof}
Write $\sigma(x) = \hat \sigma(\hat x)$ with $\hat \sigma\in \hat \Sigma$.
It suffices to show that  if $\sigma$ vanishes at the above DOFs, 
then $\hat \sigma$ vanishes on \eqref{eqn:ShatDOFs}.

If $\sigma$ vanishes at the above DOFs, then clearly
\begin{align*}
\hat \nabla \hat \sigma (\hat \alpha_i) = 0,\ \hat \sigma(\hat \alpha_i) = 0\qquad i=1,2,3.
\end{align*}
This implies  $\hat \sigma|_{\p \hat T}=0$, and therefore $\hat \nab \hat \sigma\cdot \hat \bt|_{\p \hat T} = 0$.

Next, by the chain rule and the relation $\bn = {DF^{-\intercal} \hat \bn}/{|DF^{-\intercal}\hat \bn|}$,
\begin{align*}
0= (\nab \sigma\cdot \bn)(m_i) 
%
 = \Big(\frac1{|DF^{-\intercal}_T \hat \bn|}  \hat \nab \hat \sigma \cdot (DF^{-1}_T DF^{-\intercal}_T  \hat \bn)\Big)(\hat m_i).
\end{align*}
Thus, we have $\big(\hat \nab \hat \sigma \cdot (DF_T^{-1} DF^{-\intercal}_T  \hat \bn)\big)(\hat m_i)=0$. 
Since
\begin{align*}
\big((DF_T^{-1} DF_T^{-\intercal}  \hat \bn) \cdot \hat \bn\big)(\hat m_i)
& = |(DF_T\hat \bn)(\hat m_i)|^2 \neq 0,
\end{align*}
 the vector $(DF_T^{-1} DF_T^{-\intercal}  \hat \bn)(\hat m_i)$ is not tangent to $\hat e$.
Because the tangental derivative of $\hat \sigma$ vanishes at $\hat m_i$, 
we  conclude  $\hat \nab \hat \sigma(\hat m_i)=0$.
Thus, $\hat \sigma\equiv 0$ and $\sigma \equiv 0$.
\end{proof}


\begin{remark}
Note that if $\sigma\in \Sigma(T)$ with $\sigma(x) = \hat \sigma(\hat x)$, then
$\nab \sigma(x) = (DF_T(\hat x))^{-\intercal}\hat \nab \hat \sigma(\hat x)$.
We conclude  $\nab \sigma\in \bW(T)$.
\end{remark}

As a next step, we
use the DOFs stated in Lemmas \ref{lem:WDOFs}--\ref{lem:SDOFs} to construct commuting operators with properties 
stated in Theorem \ref{thm:Commute}.
Note that an added difficulty of the construction is that the operators
are defined for functions with domain $\Omega$,
but map to functions with domain $\Omega_h$.
To mitigate this mismatch, we employ
the mapping $G:\tilde \Omega_h\to \Omega$
given in Section \ref{sec:Prelim}.

For each $T\in \mct$ and edge $e$ in $\mct$, we set
\[
T_R:=G(G_h^{-1}(T))\subset \Omega,\qquad e_R := G(G_h^{-1}(e))\subset \bar\Omega,
\]
where we recall $G_h$ is the quadratic interpolant of $G$.
That is, $T_R$ is obtained by first mapping
$T$ to its associated affine element $\tilde T = G_h^{-1}(T)\in \tilde \mct$, 
and then mapping $\tilde T$ to $G(\tilde T)\subset \Omega$.
By properties of the quadratic interpolant $G_h$, we have $G(G_h^{-1}(\alpha_i)) = \alpha_i$
and $G(G_h^{-1}(m_i)) = m_i$ for all vertices and edge midpoints of $T$.


Via Lemmas \ref{lem:WDOFs}--\ref{lem:SDOFs} 
we introduce the operator
$\bPi_W^T:\bH^2(T_R)\to \bW(T)$ 
uniquely determined by the conditions
\begin{subequations}
\label{eqn:PiW}
\begin{alignat}{2}
\label{eqn:PiW1}
&(\bPi^T_W \bv)(\alpha_i) = \bv(\alpha_i)\qquad &&i=1,2,3,\\
\label{eqn:PiW2}
&(\bPi^T_W \bv \cdot \bn)(m_i) = (\bv\cdot \bn)(m_i)\qquad &&i=1,2,3,\\
\label{eqn:PiW3}
&\int_e (\bPi^T_W \bv)\cdot \bt = \int_{e_R} \bv\cdot \bt_{e_R}\qquad &&\text{$\forall$ edges of $T$},\\
\label{eqn:PiW4}
&\int_{T} ({\rm rot}\,\bPi^T_W \bv)q = \int_{T\cap T_R} ({\rm rot}\,\bv)q\qquad &&\forall q\in {Q}_0(T),
\end{alignat}
\end{subequations}
where $\bn$ is the outward unit normal with respect to $e\subset \p T$,
$\bt$ is the unit tangent of $e\subset \p T$, and $\bt_{e_R}$ is the unit tangent
of $e_R\subset \p T_R$.
We also set  $\Pi_\Sigma^T:H^3(T_S)\to \Sigma(T)$ uniquely determined by
\begin{subequations}
\label{eqn:PiS}
\begin{alignat}{2}
\label{eqn:PiS1}
&\Pi^T_\Sigma \sigma(\alpha_i) = \sigma(\alpha_i),\quad \nabla  (\Pi_{\Sigma} \sigma) (\alpha_i) = \nab \sigma(a),\  \qquad &&i=1,2,3,\\
\label{eqn:PiS2}
& \nab  (\Pi^T_\Sigma\sigma)( m_i)\cdot  \bn(m_i) =  \nab  \sigma( m_i)\cdot  \bn(m_i) \qquad &&i=1,2,3.
\end{alignat}
\end{subequations}

We define the global spaces
\begin{align*}
\bW^h &= \{\bv\in {\bm H}({\rm rot};\Omega_h):\ \bv|_T\in \bW(T)\ \forall T\in \mct,\text{ $\bv$ is continuous on \eqref{eqn:WDOFS}}\},\\
\Sigma^h &= \{\sigma\in H^1(\Omega_h):\ \sigma|_T\in \Sigma(T)\ \forall T\in \mct,\text{ $\sigma$ is continuous on \eqref{eqn:SDOFsS}}\},
\end{align*}
and the operators $\bPi_W:H^2(\Omega)\to \bW^h$, $\Pi_\Sigma:H^3(\Omega)\to \Sigma^h$ by
\[
\bPi_W \bv|_T = \bPi^T_W \bv,\qquad \Pi_\Sigma \sigma|_T = \Pi^T_\Sigma \sigma,\qquad \forall T\in \mct.
\]
We now prove that these operators satisfy \eqref{eqn:CommutingP}--\eqref{eqn:PiWEst}.

\subsection{Proof of \eqref{eqn:CommutingP}}
For given $p\in H^3(\Omega)$,
set $\brho = \bPi_W \nab  p - \nab \Pi_\Sigma p \in \bW(T)$.
We wish to show  $\brho \equiv 0$.
This this end, it suffices to show  $\brho$
vanishes at the DOFs in Lemma \ref{lem:WDOFs} for each $T\in \mct$.

First, we consider the interior DOFs of $\bW(T)$.  Using \eqref{eqn:PiW4} and the identity ${\rm rot}\,\nab p = 0$, we have
\begin{align*}
\int_T ({\rm rot}\,\brho)q
= \int_T  ({\rm rot}\,(\bPi_W \nab  p ))q
= \int_{T\cap T_R}  ({\rm rot}\,( \nab  p ))q
=0\qquad \forall q\in Q_0(T).
\end{align*}

Let $\alpha_i$ be a vertex of $T$.  We then have by \eqref{eqn:PiW1} and \eqref{eqn:PiS1},
\begin{align*}
\brho(\alpha_i) = \bPi_W \nab  p(\alpha_i) - \nab \Pi_\Sigma p(\alpha_i) = 0.
\end{align*}
Next, let $m_i$ be an edge midpoint of $T$ and let $\bn$ be the outward unit
normal at $m_i$.  Then by \eqref{eqn:PiW2} and \eqref{eqn:PiS2},
\begin{align*}
\brho(m_i)\cdot \bn = \bPi_W \nab  p(m_i)\cdot \bn - \nab \Pi_\Sigma p(m_i)\cdot \bn = 0.
\end{align*}

Finally, let $e\subset \p T$ be an edge of $T$ with
endpoints $\alpha_2$ and $\alpha_1$.  Recalling that $e_R$ also has endpoints
$\alpha_2$ and $\alpha_1$, we use \eqref{eqn:PiS1} and \eqref{eqn:PiW3} to obtain
\begin{align*}
\int_e \brho\cdot \bt
 = \int_e \big(\bPi_W \nab p - \nab \Pi_{\Sigma} p\big)\cdot \bt
& = \int_{e_R} \nab p \cdot \bt_{e_R} - \int_e (\nab \Pi_{\Sigma} p)\cdot \bt\\
& = p(\alpha_2)-p(\alpha_1) - \big((\Pi_{\Sigma}p)(\alpha_2)-(\Pi_{\Sigma}p)(\alpha_1)\big) = 0.
\end{align*}
Thus, $\brho$ vanishes at all the DOFs in Lemma \ref{lem:WDOFs},
and we conclude $\brho\equiv 0$.

\subsection{Proof of \eqref{eqn:PiWEst}}
We break up the proof of  estimate \eqref{eqn:PiWEst} into four parts.

\begin{enumerate}
\item[(i)] We  extend ${\bm f}$ to $\bbR^2$ such that $\|{\bm f}\|_{H^3(\mathbb{R})}\le C\|{\bm f}\|_{H^3(\Omega)}$
With this extension, we define $\bI_W {\bm f}\in \bW(T)$ uniquely
by the conditions
\begin{alignat*}{2}
&(\bI_W {\bm f})(\alpha_i) = {\bm f}(\alpha_i),\quad (\bI_W {\bm f} \cdot \bn)(m_i) = ({\bm f}\cdot \bn)(m_i)\qquad &&i=1,2,3,\\
%
%
&\int_e (\bI_W {\bm f})\cdot \bt = \int_{e} {\bm f}\cdot \bt\qquad &&\text{$\forall$ edges of $T$},\\
&\int_{T} ({\rm rot}\,\bI_W {\bm f})q = \int_{T} ({\rm rot}\,{\bm f})q\qquad &&\forall q\in {Q}_0(T).
\end{alignat*}
\item[(ii)]
We now estimate $\|{\bm f}-\bI_W {\bm f}\|_{L^2(T)}$.
For notational convenience, we write $\bv = \bI_W {\bm f}$, and set 
\[
\bv(x) = R_T(\hat x) \hat \bv(\hat x),\qquad {\bm f}(x) = R_T(\hat x) \hat{\bm f}(\hat x),
\]
with $R_T(\hat x) = (DF_T(\hat x))^{-\intercal}$.
We then have 
\begin{align}\label{eqn:IW1}
&\hat \bv(\hat \alpha_i) = \hat{\bm f}(\hat \alpha_i),\quad (\hat \bv\cdot (R_T^\intercal \bn))(\hat m_i) = (\hat {\bm f}\cdot (R_T^{\intercal}\bn))(\hat m_i)\qquad i=1,2,3.
\end{align}
We also have, by a change of variables (cf.~proof of Lemma \ref{lem:WDOFs})
\begin{align}\label{eqn:IW2}
\int_{\hat e} \hat \bv\cdot \hat \bt = \int_{e} \bv\cdot \bt = \int_e {\bm f}\cdot \bt = \int_{\hat e} \hat{\bm f}\cdot \hat \bt.
\end{align}

Next, for $q\in Q_0(T)$, write $q(x) = \hat q(\hat x)$ with $\hat q\in \hat Q$.
We then have
\begin{align}\label{eqn:IW3}
\int_{\hat T} (\hat {\rm rot} \hat \bv)\hat q
& = \int_{\hat T} ( \det(DF_T){\rm rot}  \bv)\circ F_T \hat q = \int_T {\rm rot} \bv q = \int_{T} {\rm rot} {\bm f} q = \int_{\hat T} \hat {\rm rot} \hat {\bm f} \hat q
\end{align}
{It follows from \eqref{eqn:IW1}--\eqref{eqn:IW3} 
and a slight generalization of the Bramble--Hilbert lemma that
\begin{equation}\label{eqn:BHLem}
\|\hat {\bm f}-\hat \bv\|_{L^2(\hat T)}\le C|\hat {\bm f}|_{H^3(\hat T)}.
\end{equation}
}
Therefore by Lemma \ref{lem:scaling} and \eqref{eqn:BHLem} (and noting $R_T^{-1} = DF_T^\intercal$),
\begin{align}\label{eqn:PiFirst}
\|{\bm f}-\bI_W {\bm f}\|_{L^2(T)}
&\le C h_T \|R_T (\hat {\bm f}-\hat \bv)\|_{L^2(\hat T)}\\
%
%
&\nonumber\le C |\hat {\bm f}|_{H^3(\hat T)} = C  |R_T^{-1} R_T \hat {\bm f}|_{H^3(\hat T)}\\
&\nonumber\le C  \big(\|R_T^{-1}\|_{L^\infty(\hat T)} |R_T \hat {\bm f}|_{H^3(\hat T)}+|R_T^{-1}|_{W^{1,\infty}(\hat T)} |R_T \hat {\bm f}|_{H^2(\hat T)}\big)\\
%
%
&\nonumber\le C h_T^3 \|{\bm f}\|_{H^3(T)}.
\end{align}


\item[(iii)]
We now  estimate $(\bPi_W {\bm f}-\bI_W {\bm f})|_T\in \bW(T)$.
Set $\bw = \bPi_W {\bm f}-\bI_W {\bm f}\in \bW(T)$.  Then
\begin{alignat*}{2}
&\bw(\alpha_i) = 0,\quad (\bw \cdot \bn)(m_i) = 0\qquad &&i=1,2,3,\\
%
%
&\int_e \bw\cdot \bt = \int_{e_R} {\bm f}\cdot \bt_{e_R} -  \int_{e} {\bm f}\cdot \bt\qquad &&\text{$\forall$ edges of $T$},\\
&\int_{T} ({\rm rot}\, \bw) q = \int_{T\cap T_R} ({\rm rot}\, {\bm f}) q - \int_{T} ({\rm rot}\,{\bm f})q\qquad &&\forall q\in {Q}_0(T).
\end{alignat*}

Write $\bw(x) = R_T(\hat x) \hat \bw(\hat x)$.
By scaling, we have
\begin{align}\label{eqn:ScalingXY}
\|\hat \bw\|_{H^m(\hat T)}^2
&\approx 
\sum_{i=1}^3 (|\hat \bw(\hat \alpha_i)|^2 + |\hat \bw(\hat m_i)|^2)+
\mathop{\sup_{\hat q\in \hat Q_0}}_{\|\hat q\|_{L^2(\hat T)}=1} \Big| \int_{\hat T} (\hat{\rm rot}\,\hat \bw)\hat q\Big|^2\\
&\nonumber=
\sum_{i=1}^3 |\hat \bw(\hat m_i)|^2+
\mathop{\sup_{\hat q\in \hat Q_0}}_{\|\hat q\|_{L^2(\hat T)}=1} \Big| \int_{\hat T} (\hat{\rm rot}\,\hat \bw)\hat q\Big|^2.
\end{align}

Next we use the algebraic identity
\begin{align}\label{eqn:AlgBasis}
\hat \bw(\hat m_i) =
\frac{1}{\balpha^\perp \cdot \bbeta} \Big( ((\hat \bw(\hat m_i))\cdot \bbeta )\balpha^\perp - ((\hat \bw(\hat m_i))\cdot \balpha) \bbeta^\perp\Big)
\end{align}
for any linearly independent vectors $\balpha,\bbeta\in \bbR^2$.  Here, $\balpha^\perp = S\balpha$.
We take $\balpha =-\hat \bt(\hat m_i)$ and $\bbeta = R^\intercal_T(\hat m_i) \bn(m_i)$, so that
\[
|\balpha^\perp \cdot \bbeta| = |S \hat \bt(\hat m_i) \cdot (R^\intercal_T(\hat m_i) \bn(m_i))| = |(R_T(\hat m_i) \hat \bn(\hat m_i))\cdot \bn(m_i)| =|(R_T\hat \bn)(\hat m_i)|,
\]
where we used the relation $\bn = R_T \hat \bn/|R_T \hat \bn|$ in the last equality.  We use
\eqref{eqn:AlgBasis} and the identity  
$\hat \bw(\hat m_i)\cdot \bbeta= (\bw \cdot \bn)(m_i) = 0$ to conclude
\begin{align*}
|\hat \bw(\hat m_i)| = \frac{1}{|R_T \hat \bn(\hat m_i)|} \big|(\hat \bw\cdot \hat \bt)(\hat m_i) S R_T^\intercal(\hat m_i) \bn(m_i)\big|
\le \frac{|R_T (\hat m_i)|}{|R_T \hat \bn(\hat m_i)|}  |(\hat \bw \cdot \bt)(\hat m_i)|\le C |(\hat \bw \cdot \bt)(\hat m_i)|,
\end{align*}
where $C>0$ is the condition number of $R_T$ (which is independent of $h$).

We use this estimate in \eqref{eqn:ScalingXY} to conclude
\begin{align*}
\|\hat \bw\|_{H^m(\hat T)}^2 \le C\Big( \sum_{i=1}^3 |(\hat \bw\cdot \hat \bt)(\hat m_i)|^2 + \mathop{\sup_{\hat q\in \hat Q_0}}_{\|\hat q\|_{L^2(\hat T)}=1} \Big| \int_{\hat T} (\hat{\rm rot}\,\hat \bw)\hat q\Big|^2\Big).
\end{align*}
Using Simpson's rule, noting that $\hat \bw$ vanishes on the vertices of $\hat T$, we obtain
\begin{align}\label{eqn:hatwest}
\|\hat \bw\|_{H^m(\hat T)}^2 \le 
C\Big(\sum_{\hat e\subset \p \hat T} \Big|\int_{\hat e} \hat \bw \cdot \hat \bt\Big|^2 
 + \mathop{\sup_{\hat q\in \hat Q_0}}_{\|\hat q\|_{L^2(\hat T)}=1} \Big| \int_{\hat T} (\hat{\rm rot}\,\hat \bw)\hat q\Big|^2\Big).
\end{align}
We now estimate the two terms on the right-hand side of \eqref{eqn:hatwest} separately.

First by a change of variables, we have
\begin{align*}
\int_{\hat e} \hat \bw\cdot   \hat \bt
& = \int_e \bw\cdot \bt =  \int_{e_R} {\bm f}\cdot \bt_{e_R} -  \int_{e} {\bm f} \cdot \bt.
\end{align*}
Set $\Theta:=G_h\circ G^{-1}$ so that $e = \Theta(e_R)$ and $T = \Theta(T_R)$.
There  holds  \cite[Proposition 3]{Lenoir86}
\begin{equation}\label{eqn:ThetaProperties}
|\Theta(x)-x| = \mathcal{O}(h^3_T),\quad|D\Theta-I_2| = \mathcal{O}(h^2_T),\quad \bt(\Theta(x)) = \frac{D\Theta \bt_{e_R}}{|D\Theta \bt_{e_R}|}(x),\qquad x\in \bar T_R,
\end{equation}
and therefore by a change of variables,
\begin{align*}
\int_e {\bm f}\cdot \bt  = 
\int_{e_R} |(D\Theta) \bt_{e_R}| ({\bm f}\cdot \bt)\circ \Theta  = \int_{e_R} ({\bm f} \circ \Theta) \cdot (D\Theta \bt_{e_R}).
\end{align*}
Thus,
\begin{align*}
\int_{\hat e} \hat \bw \cdot \hat \bt
& = \int_{e_R} \big({\bm f}\cdot \bt_{e_R} -  ({\bm f} \circ \Theta) \cdot (D\Theta \bt_{e_R})\big)\\
& = \int_{e_R} \big({\bm f} - ({\bm f}\circ \Theta)\big)\cdot \bt_{e_R} -  ({\bm f} \circ \Theta) \cdot (D\Theta \bt_{e_R} - \bt_{e_R})\big),
\end{align*}
and therefore by \eqref{eqn:ThetaProperties}, Taylor's Theorem, and a Sobolev embedding,
\begin{align}\label{eqn:whatt}
\Big|\int_{\hat e} \hat \bw \cdot \hat \bt\Big|
&\le C\big(h^4_T |{\bm f}|_{W^{1,\infty}(\bbR^2)}+h_T^3 \|{\bm f}\|_{L^\infty(\bbR^2)}\big)\le C h_T^3 \|{\bm f}\|_{H^3(\Omega)}.
\end{align}

Next wet let $\hat q\in \hat Q_0$ with $\|\hat q\|_{L^2(\hat T)} = 1$ and compute
\begin{align*}
 \int_{\hat T} (\hat{\rm rot}\,\hat \bw)\hat q
 & = \int_T {\rm rot}\, \bw q = \int_{T\cap T_R} ({\rm rot}\,{\bm f})q - \int_T ({\rm rot}\,{\bm f}) q = \int_{T\backslash T_R} ({\rm rot}\,{\bm f}) q.
 \end{align*}
 where $q\in Q_0(T)$ with $q(x) = \hat q(\hat x)$.  Using $\|q\|_{L^2(T)}\le C h_T \|\hat q\|_{L^2(\hat T)}\le C h_T$, we obtain
 \begin{align}\label{eqn:whatrot}
 \int_{\hat T} (\hat{\rm rot}\,\hat \bw)\hat q\le |T\backslash T_R| \|{\rm rot}\,{\bm f}\|_{L^{\infty}(\bbR^2)} \|q\|_{L^2(T)}
 \le C h_T^3  \|{\bm f}\|_{H^3(\Omega)} \|q\|_{L^2(T)}\le C h_T^4  \|{\bm f}\|_{H^3(\Omega)}.
 \end{align}

Applying estimates \eqref{eqn:whatt}--\eqref{eqn:whatrot} to \eqref{eqn:hatwest} yields
\begin{align*}
\|\hat \bw\|_{H^m(\hat T)}\le C h_T^3 \|{\bm f}\|_{H^{3}(\Omega)}.
\end{align*}
Therefore
\begin{align*}
\|\bPi_W {\bm f} - \bI_W {\bm f}\|_{L^2(T)} = \|\bw\|_{L^2(T)}\le C h_T \|R_T \hat \bw\|_{L^2(\hat T)}\le C \|\hat \bw\|_{L^2(\hat T)}\le C h_T^3  \|{\bm f}\|_{H^3(\Omega)}.
\end{align*}
Finally by \eqref{eqn:PiFirst} and the triangle inequality,
\[
\|{\bm f}- \bPi_W {\bm f}\|_{L^2(T)}\le C h_T^3 \|{\bm f}\|_{H^3(\Omega)}.
\]
Summing over $T\in \mct$ yields the estimate \eqref{eqn:PiWEst}:
\begin{align*}
\|{\bm f}-\bPi_W {\bm f}\|_{L^2(\Omega_h)}
&\le C \Big(\sum_{T\in \mct} h_T^6 \|{\bm f}\|_{H^3(\Omega)}^2\Big)^{1/2}\le 
C h^2 \|{\bm f}\|_{H^3(\Omega)} \Big(\sum_{T\in \mct} h_T^2\Big)^{1/2}\le C h^2 \|{\bm f}\|_{H^3(\Omega)}.
\end{align*}

\end{enumerate}

\section{Numerical Experiments}\label{sec:Numerics}

In this section we perform a simple set of numerical experiments
and compare the results with the theory established in the previous section.
We let $\Omega =B_1(0)\subset \bbR^2$ be the unit ball, and take the data such that the exact solution
is given by
\[
\bu = 
\begin{pmatrix}
(x_1^2+x_2^2-1)(8x_1^2 x_2+x_1^2+5x_2^2-1)\\
-4x_1(x_1^2+x_2^2-1)(3x_1^2+x_2^2+x_2-1)
\end{pmatrix},\qquad
p = 10(x_1^2+x_2^2-\frac12).
\]

We compute the finite element method \eqref{eqn:FEM}, taking the source approximation ${\bm f}_h$
to be the quadratic (nodal) Lagrange interpolant of ${\bm f}$, and the viscosity $\nu=10^{-1}$.  
The errors 
for a decreasing sequence of mesh parameters $h$ are depicted in Figure \ref{fig:Test1}--\ref{fig:Test1B}.
For comparison, we also plot the errors of the analogous Scott-Vogelius finite element method
using affine approximations, i.e., method \eqref{eqn:FEM} with $\bV^h\times Q^h$ replaced by $\tilde \bV^h\times \tilde Q^h$.  The Figure shows the asymptotic convergence rates
\[
\|\bu-\bu_h\|_{L^2(\Omega_h)}   = \mathcal{O}(h^3),\quad \|\nab (\bu-\bu_h)\|_{L^2(\Omega_h)} = \mathcal{O}(h^2),\quad
\|p-p_h\|_{L^2(\Omega_h)} = \mathcal{O}(h^2),
\]
for the isoparametric approximations.  These results agree with the theoretical results stated in Theorem \ref{thm:Main}.
In contrast, the numerics indicate  the solution of the affine approximation, denoted by $(\bu^{aff},p_h^{aff})\in \tilde \bV^h\times \tilde Q^h$ satisfies
the sub-optimal convergence rates
\[
\|\bu-\bu^{aff}_h\|_{L^2(\tilde \Omega_h)}   = \mathcal{O}(h^2),\quad \|\nab (\bu-\bu^{aff}_h)\|_{L^2(\tilde \Omega_h)} = \mathcal{O}(h^{3/2}),\quad
\|p-p^{aff}_h\|_{L^2(\tilde \Omega_h)} = \mathcal{O}(h^{3/2}).
\]

We also solve the finite element method \eqref{eqn:FEM} but with isoparametric spaces
defined via the usual composition, i.e., with velocity-pressure pair \eqref{eqn:pairComp}.
Numerical experiments indicate the method is stable 
and converges with optimal order.  However, as Figure \ref{fig:Test1B} shows,
the method is not divergence--free (nor pressure robust).

\begin{center}
\begin{figure}

\begin{tikzpicture}[scale=0.9]
\begin{loglogaxis}[xlabel={$h$},ylabel={},legend pos=south west,x dir=reverse]

\addplot coordinates {
(0.200,  7.35070e-02)
(0.100,  1.24639e-02)  
(0.050,  1.82328e-03) 
(0.025,  2.97070e-04)  
(0.013,  3.73755e-05)
}; 

\addplot coordinates {
(0.200,  8.88738e-02)
(0.100,  1.95354e-02)  
(0.050,  4.45288e-03)  
(0.025,  1.09608e-03)  
(0.013,  2.68463e-04)  
} ;

\legend{$\|\bu-\bu_h\|_{L^2(\Omega_h)}$,$\|\bu-\bu_h^{aff}\|_{L^2(\tilde \Omega_h)}$}
\end{loglogaxis}
\end{tikzpicture}
\begin{tikzpicture}[scale=0.9]
\begin{loglogaxis}[xlabel={$h$},ylabel={},legend pos=south west,x dir=reverse]

\addplot coordinates {
(0.200,  1.59679e+00)
(0.100,  5.06952e-01)  
(0.050,  1.47748e-01)  
(0.025,  4.26637e-02)  
(0.013, 1.05260e-02 ) 
};

\addplot coordinates {
(0.200,  1.50788e+00)
(0.100,  5.10514e-01)  
(0.050,  1.66926e-01)  
(0.025,  5.90583e-02)  
(0.013,  2.01655e-02)  
};  

\legend{$\|\nab (\bu-\bu_h)\|_{L^2(\Omega_h)}$,$\|\nab (\bu-\bu_h^{aff}\|_{L^2(\tilde\Omega_h)})$}
\end{loglogaxis}
\end{tikzpicture}

\caption{\label{fig:Test1}Velocity errors of the isoparametric Scott-Vogelius finite element method \eqref{eqn:FEM} (blue)
and the affine Scott-Vogelius method (red)
for decreasing values of mesh parameter $h$.}

\end{figure}
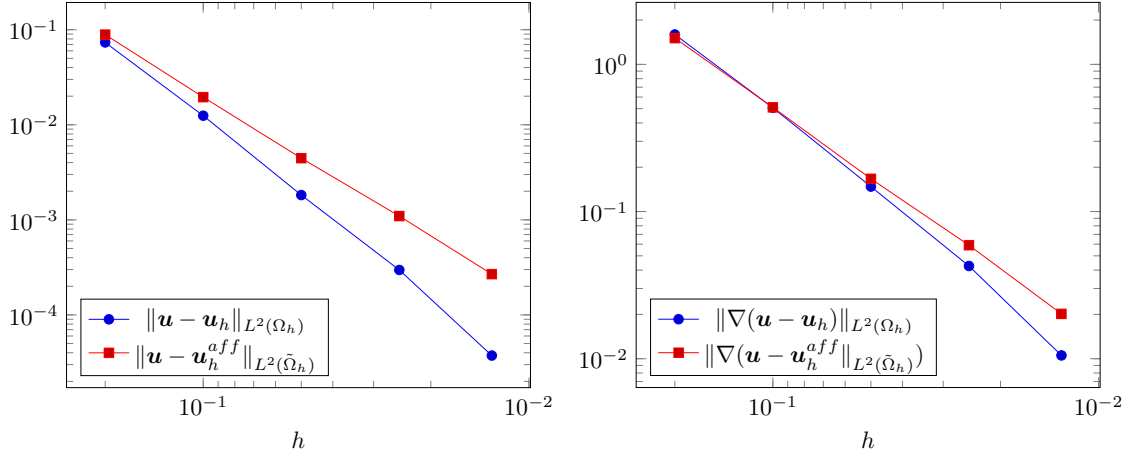
\end{center}

\begin{center}
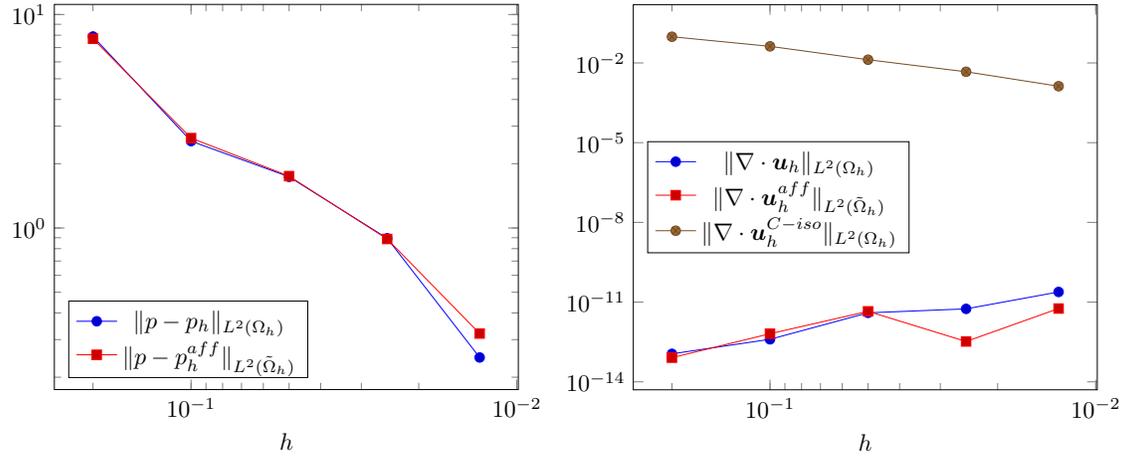
\begin{figure}
\begin{tikzpicture}[scale=0.9]
\begin{loglogaxis}[xlabel={$h$},ylabel={},legend pos=south west,x dir=reverse]

\addplot coordinates {
(0.200,  7.88712e+00)
(0.100,  2.55199e+00) 
(0.050,  1.73638e+00)  
(0.025,  8.94864e-01)  
(0.013,  2.47382e-01)  
};

\addplot coordinates {
(0.200,  7.69728e+00)
(0.100,  2.63697e+00)  
(0.050,  1.74916e+00)  
(0.025,  8.88013e-01) 
(0.013,  3.20275e-01)  
};

\legend{$\|p-p_h\|_{L^2(\Omega_h)}$,$\|p-p_h^{aff}\|_{L^2(\tilde \Omega_h)}$}
\end{loglogaxis}
\end{tikzpicture}
\begin{tikzpicture}[scale=0.9]
\begin{loglogaxis}[xlabel={$h$},ylabel={},x dir=reverse,legend style={at={(0.03,0.5)},anchor=west}]

\addplot coordinates {
(0.200,  1.12799e-13)
(0.100,  3.97904e-13) 
(0.050,  3.94818e-12) 
(0.025,  5.56621e-12) 
(0.013,  2.41007e-11) 
};

\addplot coordinates {
(0.200,  8.08242e-14)
(0.100,  6.45706e-13) 
(0.050,  4.48863e-12) 
(0.025,  3.29514e-13) 
(0.013,  5.72475e-12) 
};

\addplot coordinates{
(0.200,  9.65601e-02)
(0.100,  4.20591e-02)  
(0.050,  1.32019e-02)  
(0.025,  4.65026e-03)  
(0.013,  1.32950e-03) 
};
\legend{$\|\nab \cdot \bu_h\|_{L^2(\Omega_h)}$,$\|\nab \cdot \bu_h^{aff}\|_{L^2(\tilde \Omega_h)}$,$\|\nab \cdot \bu_h^{C-iso}\|_{L^2(\Omega_h)}$}
\end{loglogaxis}
\end{tikzpicture}

\caption{\label{fig:Test1B}Left: Pressure errors of the isoparametric Scott-Vogelius finite element method \eqref{eqn:FEM} (blue)
and the affine Scott-Vogelius method (red)
for decreasing values of mesh parameter $h$. Right: Divergence errors of the isoparametric Scott-Vogelius finite element method (blue),
the affine Scott-Vogelius method (red), and the isoparametric Scott-Vogelius using the standard composition of isoparametric mappings (brown).}
\end{figure}
\end{center}

\appendix

\section{Proofs of Preliminary Results}\label{sec:ProofPrelim}

\subsection{Proof of Lemma \ref{lem:ATLem}}
\begin{proof}
For notational simplicity, we set $\hat g(\hat x) = \det(DF_T(\hat x))$.
Using the fact that $DF_T\to \det(DF_T)$ is quadratic in two dimensions
and the estimates \eqref{eqn:FProperties}, a simple calculation shows
$|\hat g|_{W^{m,\infty}(\hat T)}\le C h^{2+m}_T$.  Consequently, by the quotient rule, 
for any multi-index $\alpha$ with $|\alpha|=m$,
\begin{align*}
\Big|\frac{\p^m}{\p \hat x^\alpha} \frac1{\hat g}\Big|
&\le C \sum_{|\beta^{(1)}|+|\beta^{(2)}|+\cdots +|\beta^{(m)}|=m} \frac{|\p^{|\beta^{(1)}|} \hat g/\p \hat x^{\beta^{(1)}}|\cdots |\p^{|\beta^{(m)}|} \hat g/\p \hat x^{\beta^{(m)}}|}{|\hat g^{m+1}|}\\
&\le C \sum_{|\beta^{(1)}|+|\beta^{(2)}|+\cdots +|\beta^{(m)}|=m} \frac{(h_T^{2+|\beta^{(1)}|})\cdots  (h_T^{2+|\beta^{(m)}|})}{|\hat g^{m+1}|}
\le C \frac{h_T^{3m}}{|\hat g^{m+1}|}\le C h_T^{m-2},
\end{align*}
where we used \eqref{eqn:FProperties} in the last inequality.

We then use the product rule and \eqref{eqn:FProperties} to find, for any $i,j\in \{1,2\}$
and multi-index $\alpha$ with $|\alpha|=m$,
\begin{align*}
\Big|\frac{\p^m (A_T)_{i,j}}{\p \hat x^{\alpha}}\Big| 
& =\Big| \frac{\p^m }{\p \hat x^\alpha} \big({(DF_T)_{i,j}}/{\hat g}\big)\Big|\\
&\le C \sum_{|\beta|+|\gamma|=m} \big| \p^\beta (DF_T)_{i,j}/\p^{|\beta|} \hat x\big| \big| \p^\gamma \hat g^{-1}/\p^{|\gamma|} \hat x\big|\\
&\le C \sum_{|\beta|+|\gamma|=m}  \big(h_T^{1+|\beta|}\big)\big(h_T^{|\gamma|-2}\big)\le C h_T^{m-1}.
\end{align*}
This establishes the first inequality in \eqref{eqn:ATbound}.

Next, we use the identity $A_T^{-1} = \det(DF_T)(DF_T)^{-1} = {\rm adj}(DF_T)$, the adjugate matrix of $DF_T$.
Because the entries of $DF_T$ and ${\rm adj}(DF_T)$ are the same up to permutation and sign in two dimensions,
we have by \eqref{eqn:FProperties},
\[
|A_T^{-1}|_{W^{m,\infty}(\hat T)}  = |DF_T|_{W^{m+1,\infty}(\hat T)}\le \left\{
\begin{array}{ll}
C h_T^{1+m} & m=0,1\\
0 & m\ge 2
\end{array}\right.
\]\hfill
\end{proof}

\subsection{Proof of Lemma \ref{lem:cofactorOnEdge}}

\begin{proof}
Let $\hat \bt$ be the unit tangent vector of $\hat e$ obtained by rotating $\hat \bn$ $90$ degrees clockwise.
Then a  calculation shows 
\[
\det(DF_T(\hat x))(DF_T(\hat x))^{-\intercal}\hat \bn  = \begin{pmatrix}
-(DF_T(\hat x) \hat \bt)_2\\
(DF_T(\hat x) \hat \bt)_1
\end{pmatrix}.
\]
Because $F_T$ restricted to $\hat e$ is affine, $(DF_T(\hat x) \hat \bt)$ is constant on $\hat e$.
This proves the lemma.
\end{proof}
\end{document}